\def\isarxiv{1}

\documentclass[11pt]{article}
\usepackage[margin=1in]{geometry}
\usepackage{latexsym, amscd, amsfonts, mathrsfs, amsmath, amssymb, amsthm}
\usepackage{bm}
\usepackage{hyperref,color}
\usepackage[noend]{algpseudocode}
\usepackage[capitalize,nameinlink]{cleveref}
\usepackage{caption}
\usepackage{subcaption}

\makeatletter
\newcounter{algorithmicH}
\let\oldalgorithmic\algorithmic
\renewcommand{\algorithmic}{%
  \stepcounter{algorithmicH}
  \oldalgorithmic}
\renewcommand{\theHALG@line}{ALG@line.\thealgorithmicH.\arabic{ALG@line}}
\makeatother

\usepackage[dvipsnames]{xcolor}
\hypersetup{
	colorlinks=true,
	pdfpagemode=UseNone,
    citecolor=OliveGreen,
    linkcolor=NavyBlue,
    urlcolor=Magenta,
	pdfstartview=FitW
}
\usepackage{stmaryrd, tikz-cd, enumitem, mathrsfs, bbm, algorithm, url, esint, todonotes, listings, moreverb, makecell}

\def\bE{\mathbb{E}}

\def\bP{\mathbb{P}}
\def\bR{\mathbb{R}}

\def\bZ{\mathbb{Z}}

\def\bfb{\mathbf{b}}

\def\cA{\mathcal{A}}
\def\cB{\mathcal{B}}

\def\cI{\mathcal{I}}

\def\cW{\mathcal{W}}
\def\cX{\mathcal{X}}
\def\cY{\mathcal{Y}}
\def\cZ{\mathcal{Z}}

\def\scrB{\mathscr{B}}
\def\scrP{\mathscr{P}}
\def\scrT{\mathscr{T}}

\def\wt{\widetilde}
\def\wh{\widehat}
\DeclareMathOperator\br{\mathrm{br}}

\DeclareMathOperator\Ber{\mathrm{Ber}}
\DeclareMathOperator\BEC{\mathrm{BEC}}
\DeclareMathOperator\BSC{\mathrm{BSC}}
\DeclareMathOperator\BOHT{\mathrm{BOHT}}
\DeclareMathOperator\BP{\mathrm{BP}}
\DeclareMathOperator\Id{\mathrm{Id}}
\DeclareMathOperator\sgn{\mathrm{sgn}}

\DeclareMathOperator\HSBM{\mathrm{HSBM}}
\DeclareMathOperator\Unif{\mathrm{Unif}}
\DeclareMathOperator\Pois{\mathrm{Pois}}
\DeclareMathOperator\poly{\mathrm{poly}}
\DeclareMathOperator\supp{\mathrm{supp}}

\DeclareMathOperator\TV{\mathrm{TV}}

\def\cond{\mathrm{cond}}
\def\KS{\mathrm{KS}}

\def\iidsim{\stackrel{\text{i.i.d.}}{\sim}}

\newcommand\setItemnumber[1]{\setcounter{enumi}{\numexpr#1-1\relax}}

\newcommand{\rom}[1]{\textup{\uppercase\expandafter{\romannumeral#1}}}

\newcommand{\txtsf}[1]{{\normalfont\textsf{#1}}}
\newcommand{\txtsc}[1]{{\normalfont\textsc{#1}}}

\newtheorem{theorem}{Theorem}
\newtheorem{lemma}[theorem]{Lemma}
\newtheorem{proposition}[theorem]{Proposition}
\newtheorem{corollary}[theorem]{Corollary}

\theoremstyle{definition}
\newtheorem{definition}[theorem]{Definition}

\begin{document}
\title{Exact reconstruction thresholds on hypertrees over a symmetric binary alphabet}
\ifdefined\isarxiv
\author{Yuzhou Gu\thanks{\texttt{sevenkplus.g@gmail.com}}}
\else
\author{Anonymous Authors}
\fi
\date{}

\maketitle

\begin{abstract}
  We establish the exact reconstruction thresholds for a class of broadcasting models on hypertrees over a symmetric binary alphabet. As a consequence, we show that the condensation threshold coincides with the Kesten-Stigum threshold for random NAE-SAT and random hypergraph bicoloring with arity at most four at any temperature, confirming a prediction of Ricci-Tersenghi et al.~'19. We also determine the exact weak recovery threshold for the two-community hypergraph stochastic block model in certain parameter regimes.

  The proof combines an improved robust non-reconstruction analysis with a computer-assisted rigorous population dynamics algorithm. It relies heavily on information-theoretic tools, in particular the theory of binary memoryless symmetric channels and channel comparison methods.
\end{abstract}


\section{Introduction} \label{sec:intro}

\subsection{Broadcasting on hypertrees} \label{sec:intro:boht}
Consider the following broadcasting on hypertrees (BOHT) model.
This model has three parameters $d\in \bR_{\ge 0}$, $r\in \bZ_{\ge 2}$, and $\lambda\in \left[ -\frac 1{2^{r-1}-1}, 1\right]$, and is denoted $\BOHT(\Pois(d),r,\lambda)$.
We first generate an $r$-uniform Galton-Watson tree $T$ in which every vertex has $\Pois(d)$ downward hyperedges.
Then a label $\sigma_u\in \{\pm\}$ is generated for every vertex $u\in T$ as follows. The root $\rho$ has label $\sigma_\rho \sim \Unif(\{\pm\})$.
For every vertex $u$, conditioned on $\sigma_u$, for every downward hyperedge $(u,v_1,\ldots,v_{r-1})$, the children's labels are generated according to $(\sigma_{v_1},\ldots,\sigma_{v_{r-1}}) \sim B(\cdot | \sigma_u)$, where $B=B_{r,\lambda}: \{\pm\} \to \{\pm\}^{r-1}$ is the channel
\begin{align} \label{eqn:b-r-lambda}
  B_{r,\lambda}(y_1,\ldots,y_{r-1} | x) = \left\{
    \begin{array}{ll}
      \lambda + \frac{1-\lambda}{2^{r-1}} & \text{if}~y_1=\cdots=y_{r-1}=x,\\
      \frac{1-\lambda}{2^{r-1}} & \text{o.w.}
    \end{array}
  \right.
\end{align}
This completes the labeling of $T$.
For $r=2$, this is the Ising model on a tree.

The reconstruction problem is a fundamental problem for the BOHT model.
Let $L_k$ be the set of vertices at distance $k$ from $\rho$, and let $T_k$ be the structure of the tree induced by the vertices at distance at most $k$ from $\rho$.
The reconstruction problem asks whether the boundary labels carry non-vanishing information about the root:
\begin{align}
  \lim_{k\to \infty} I(\sigma_\rho ; T_k,\sigma_{L_k}) > 0.
\end{align}
When the limit is positive we say reconstruction is possible; otherwise reconstruction is impossible.

The reconstruction problem for the BOHT model can be studied through a belief propagation (BP) operator on the space of information channels.
For $k\ge 0$, let $M_k$ denote the channel $\sigma_\rho \mapsto (T_k, \sigma_{L_k})$.
Reconstruction is possible if and only if
\begin{align}
  \lim_{k\to \infty} I(\pi, M_k) > 0,
\end{align}
where $\pi = \Unif(\{\pm\})$.
These channels evolve by the BP operator: for a binary-input channel $P$,
\begin{align} \label{eqn:intro:bp-operator}
  \BP(P) = \bE_{b\sim \Pois(d)} \left( P^{\times (r-1)} \circ B \right)^{\star b},
\end{align}
where $\bE$ denotes channel mixture and $(\cdot)^{\star b}$ denotes $\star$-power (repeated observation).
(See \cref{sec:prelim:bms} for relevant information theory background.)
The trivial channel $P = 0$ is always a fixed point, and the reconstruction problem is equivalent to asking whether the BP operator admits a non-trivial fixed point. For the special BOHT model, and more generally for binary symmetric BOHT models, it suffices to work with binary memoryless symmetric (BMS) channels, which have a simpler representation than general binary-input channels.

The Kesten-Stigum (KS) threshold for BOHT is $(r-1) d \lambda^2 = 1$.
A folklore result (generalizing \cite{kesten1966additional}) states that reconstruction is possible above the KS threshold.
For the Ising model ($r=2$), \cite{bleher1995purity,evans2000broadcasting} proved that the reconstruction threshold matches the KS threshold.
For the special BOHT model, \cite{gu2023weak} proved that KS is tight for $r=3,4$, $\lambda>0$, and any $r$, $\lambda\ge \frac 15$; and that KS is not tight for $r\ge 7$ and $|\lambda|$ small enough. \cite{gu2024community} proved that KS is tight for $r\le 6$ and $|\lambda|$ small enough.
\cite{yuan2022testing,gu2024community} showed that for every $r\ge 5$, there exists $\lambda_r^* \in \left[ -\frac 1{2^{r-1}-1}, 1 \right]$ such that KS is not tight for $\lambda \le \lambda_r^*$. Furthermore, $\lambda_r^* < 0$ for $r=5,6$ and $\lambda_r^*  > 0$ for $r\ge 7$.

Our main result fully establishes tightness of the KS threshold for $r=3,4$, and substantially improves the parameter regimes in which KS is known to be tight for $r=5,6$.
\begin{theorem}[Reconstruction threshold for the special BOHT] \label{thm:boht}
  Consider the special BOHT model $\BOHT(\Pois(d),r,\lambda)$ where $r\in \bZ_{\ge 3}$, $d\in \bR_{\ge 0}$, $\lambda\in \left[-\frac 1{2^{r-1}-1}, 1\right]$.
  If $(r-1) d \lambda^2 \le 1$ and one of the following is true, then reconstruction is impossible.
  \begin{enumerate}[label=(\roman*)]
    \item\label{item:thm:boht:i} $r=3$ or $r=4$;
    \item\label{item:thm:boht:ii} $r=5$ and $\lambda\ge -\frac 1{18}$;
    \item\label{item:thm:boht:iii} $r=6$ and $\lambda\ge \frac 1{46}$.
  \end{enumerate}
\end{theorem}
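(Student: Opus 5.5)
Since $\BP$ is monotone under the degradation order on BMS channels, the plan is to show that the iterates $M_k = \BP^k(\mathrm{Id})$ converge to the trivial channel. The argument has two stages. First, a robust non-reconstruction stage: I will show that if $M_k$ has small enough capacity, or small enough $\chi^2$-type information, then the iterates contract to $0$. Second, a computer-assisted stage: I will certify that after finitely many steps, $M_k$ is degraded enough to fall inside the basin found in the first stage.

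For the robust stage I would use a potential that is linear along the KS direction, such as $\Phi(P)=\bE[\theta^2]$, where $\theta$ is the posterior mean of the input under $P$. Expanding $P^{\times(r-1)}\circ B$ through its Fourier coefficients (the only non-trivial degree is determined by $\lambda$) gives a leading term $(r-1)d\lambda^2\Phi(P)$. The corrections are of order $\Phi(P)^2$, and the sign of the second-order coefficient depends on $r$ and $\lambda$. To handle the critical case $(r-1)d\lambda^2=1$, I would not rely on a strict contraction. Instead I would show a bound $\Phi(\BP(P))\le \Phi(P)-c\,\Phi(P)^2$ on a neighbourhood of $0$. This forces $\Phi(M_k)\to 0$ like $1/k$, and hence non-reconstruction. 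The quadratic term comes from the $\star$-combination over $\Pois(d)$ hyperedges, which always gives a negative contribution, together with the $(r-1)$-fold product, whose sign depends on $\lambda$. To control it uniformly over all BMS channels I will use channel comparison: among BMS channels with a given $\Phi$, the worst case for the relevant functionals is extremal, either $\BEC$ or $\BSC$. This reduces the inequality to a one- or two-parameter calculus check. For $r=3,4$ the check should succeed outright for every $\lambda$. For $r=5,6$ it will succeed only on part of the $\lambda$ range, and the finite verification in the second stage handles the rest.

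The second stage is a rigorous population dynamics. I will represent BMS channels through the distribution of $|\theta|$ on a finite grid, and round in the degrading direction, towards $0$, so that each computed channel upper-bounds the true $M_k$ in the degradation order. I then iterate $\BP$ numerically for each $(r,\lambda)$ in a fine grid of the claimed range, with $d=1/((r-1)\lambda^2)$. By monotonicity in $d$, the case $(r-1)d\lambda^2<1$ follows from the critical one. Monotonicity in $\lambda$ is not available, so I will use interval arithmetic and cover $\lambda$ by intervals: on each interval I bound $B_{r,\lambda}$ by a channel that degrades to all members of the family. The goal is to verify that each iterate lands in the region where the stage-one inequality holds.

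I expect the main obstacle to be the stage-one inequality near the boundaries $\lambda=-1/18$ for $r=5$ and $\lambda=1/46$ for $r=6$. There the second-order coefficient is nearly zero, so both the size of the basin and the precision needed in stage two become delicate. At that step I would first try sharpening $\Phi$ by adding a higher-order correction term, for example a combination of $\bE\theta^2$ and $\bE\theta^4$, chosen so that the extremal-channel reduction still applies.
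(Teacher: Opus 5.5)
Your two-stage architecture is the paper's: $\chi^2$-capacity as potential, a negative second-order term from the $\star$-powers, reduction to an erasure channel, certified population dynamics, and a covering of the $\lambda$-range. However, two steps fail as written.

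\textbf{Stage two.} Rounding $|\theta|$ toward $0$ makes the computed channel \emph{degraded} from the true one (\cref{lem:bms-deg-coupling}). That gives a lower bound on $M_k$, from which nothing about $C_{\chi^2}(M_k)\le\epsilon$ follows. Mass has to be pushed toward $1$. You can round up, or split the mass at $\theta\in[\theta_i,\theta_{i+1}]$ between the two grid points preserving $\bE\theta$, which is a degradation upper bound. The paper instead splits preserving $\theta^2$ (\cref{alg:bms-builder}); this is only a less-noisy upper bound (\cref{lem:bms-ln-coupling}) but is much tighter. Floor-rounding losses and the Poisson tail beyond a cutoff $b^*$ must also be sent to $\Id$. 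Your covering needs the same care. Monotonicity in $\lambda$ does hold within a fixed sign (\cref{lem:boht-monotone}): $B_{r,\lambda'}$ is $B_{r,\lambda}$ with all $r-1$ outputs re-randomized with probability $1-\lambda'/\lambda$. That is exactly your interval-dominating channel, but the dominating point must carry both the largest $|\lambda|$ and the largest $d$ of the interval, so it lies strictly above KS. Runs at $d=1/((r-1)\lambda^2)$ on grid points do not dominate their neighbours. The paper runs at $d'>1/((r-1)\lambda'^2)$, where it needs only $C_{\chi^2}(\BP^k(\Id))\le 0.2$, not contraction.

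\textbf{Stage one.} The claim that the erasure channel is extremal at fixed $\Phi$ holds only through the less-noisy order. The relation $P\le_{\ln}\BEC_{1-C_{\chi^2}(P)}$ (\cref{lem:bms-ln-bec}) is preserved by tensor powers, precomposition with $B$, $\star$-powers and mixtures. It gives $C_{\chi^2}(\BP(P))\le f(C_{\chi^2}(P))$ with $f(x)=1-\exp(-d\,g(x))$ explicit (\cref{prop:robust}). In the degradation order, which you use everywhere else, the smallest dominating erasure channel is $\BEC_{1-\bE\theta_P}$. Its linear term $(r-1)d\lambda^2\,\bE\theta_P$ cannot be closed against $\Phi(P)=\bE\theta_P^2$.

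With this $f$, the obstacle you anticipate is not there. Since $f_{r,\lambda}\le f_{r,-1/(2^{r-1}-1)}$, there is a basin uniform in $\lambda$ for each $r\ge 3$ (\cref{prop:special-robust}), and the quadratic coefficient is not small at $\lambda=-\frac1{18}$ or $\frac1{46}$. Those endpoints come from the global shape of $f$ and from computation:
\begin{itemize}
\item $\frac1{46}$ is about where $f_{6,\lambda}(x)<x$ still holds on all of $(0,1]$; the binding region is $x\approx 0.6$, not near $0$. So $r=6$ needs no computer at all.
\item $-\frac1{18}$ is a computational-budget limit of the certified iteration at $r=5$.
\item $r=4$ is not settled outright, since $f_{4,-1/7}(\frac12)>\frac12$; it needs the computer on $[-\frac17,-\frac{13}{100}]$.
\item At $r=3$, $\lambda=-\frac13$ the quadratic coefficient vanishes ($f=x-\frac{x^3}3+\cdots$), so the bound $\Phi-c\Phi^2$ is not uniform. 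Still, $f(x)<x$ on $(0,1]$ is all that is needed.
\end{itemize}
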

In comparison, the previous best result for $r=5$ (respectively, $r=6$) \cite{gu2023weak} establishes tightness for $\lambda\ge \frac 17$ (respectively, $\lambda\ge \frac {11}{75}$).
The bounds $-\frac 1{18}$ and $\frac 1{46}$ in \cref{thm:boht} are not tight and can be slightly improved with more computational power. However, we do not determine the smallest $\lambda_5^*$ (respectively, $\lambda_6^*$) for which KS is tight for all $\lambda\ge \lambda_5^*$ (respectively, $\lambda\ge \lambda_6^*$).

\subsection{Applications} \label{sec:intro:app}
\paragraph{Condensation thresholds for random CSPs}
Our first application concerns the condensation threshold for random NAE-SAT and random hypergraph bicoloring.
In random constraint satisfaction problems (CSPs), the condensation threshold is a predicted transition point in the partition function (number of solutions) and in the geometry of the solution space. Determining this threshold is an important problem in the study of random CSPs. We refer the reader to \cite{sly2018phase} for a survey of predicted phase transitions in random CSPs and progress on rigorously establishing them.

In an NAE-$k$-SAT problem, the state space is $\cX = \{\pm\}$, and every clause has the form $e=(\{i_1,\ldots,i_k\}, c_1,\ldots,c_k)$, where $\{i_1,\ldots,i_k\} \in \binom{[n]}{k}$ and $c_1,\ldots,c_k \in \{\pm\}$.
An assignment $x\in \{\pm\}^n$ satisfies a clause $(\{i_1,\ldots,i_k\}, c_1,\ldots,c_k)$ if and only if $\{c_j x_{i_j} : j\in [k]\} = \{\pm\}$.
The random NAE-$k$-SAT model is obtained by choosing $m\sim \Pois(\alpha n)$, and then choosing $m$ clauses independently with $\{i_1,\ldots,i_k\} \sim \Unif\left(\binom{[n]}k\right)$ and $c_1,\ldots,c_k\iidsim \Unif(\{\pm\})$.
The random $k$-uniform hypergraph bicoloring model is obtained by choosing $m$ and $\{i_1,\ldots,i_k\}$ in the same way but taking $c_j=+$ for all $j\in [k]$.

Let $\beta\in \left(0,\infty\right]$ be the inverse temperature parameter.
For an NAE-SAT or hypergraph bicoloring instance, define the partition function
\begin{align}
  Z = \sum_{x\in \{\pm\}^n} \exp\left( -\beta \sum_{e\in E} \mathbbm1\{ c_1(e) x_{i_1(e)} = \cdots = c_k(e) x_{i_k(e)} \}\right),
\end{align}
where $E$ is the set of clauses.
At zero temperature ($\beta=\infty$), $Z$ is the number of satisfying assignments.
The condensation threshold $d_{k,\cond}(\beta)$ is the critical value of $d=k\alpha$ at which the free energy density $\lim_{n\to \infty} \frac 1n \log Z$ ceases to converge to the replica symmetric formula
\begin{align} \label{eqn:nae-sat-rs-pred}
  \log 2 + \frac dk \log\left(1-\frac{1-\exp(-\beta)}{2^{k-1}}\right).
\end{align}

For a large class of random CSPs, including random NAE-SAT and random hypergraph bicoloring, \cite{coja2017information} identified the condensation threshold $d_{k,\cond}(\beta)$ through an infinite-dimensional optimization problem, and showed that it suffices to optimize over fixed points of the belief propagation operator. \Cref{thm:boht} rules out non-trivial fixed points in the regimes below, and therefore implies the following result.
\begin{theorem}[Condensation threshold for random NAE-SAT and random hypergraph bicoloring] \label{thm:nae-sat}
  Consider the random NAE-$k$-SAT model or the random $k$-uniform hypergraph bicoloring model at inverse temperature $\beta \in \left(0, \infty\right]$.
  If one of the following is true, then the condensation threshold is given by
  \begin{align}
    d_{k,\cond}(\beta) = \frac 1{k-1} \left( \frac{2^{k-1}-1+\exp(-\beta)}{1-\exp(-\beta)} \right)^2.
  \end{align}
  \begin{enumerate}[label=(\roman*)]
    \item\label{item:thm:nae-sat:i} $k=3$ or $k=4$;
    \item\label{item:thm:nae-sat:ii} $k=5$ and $\beta \le \log \frac{19}{3}$.
  \end{enumerate}
\end{theorem}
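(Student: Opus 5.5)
The plan is to reduce \cref{thm:nae-sat} to \cref{thm:boht} via the characterization of \cite{coja2017information}. First identify the planted model. For NAE-$k$-SAT and hypergraph bicoloring at inverse temperature $\beta$, the local weak limit of the planted (teacher--student) model is a $k$-uniform Galton--Watson hypertree. Each vertex has $\Pois(d)$ downward clauses, where $d=k\alpha$. Conditioned on the parent's spin, the $k-1$ children's spins of a clause are drawn with weight proportional to $\exp(-\beta\,\mathbbm1\{\text{clause violated}\})$.

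For bicoloring, the clause is violated exactly when all $k$ spins are equal. Given the parent spin $x$, the children's configuration $(x,\ldots,x)$ then has weight $e^{-\beta}$ and each of the other $2^{k-1}-1$ configurations has weight $1$. For NAE-SAT, flipping by the uniform random signs $c_j$ gives the same channel up to a relabeling that commutes with the symmetric structure. Since the signs are independent of everything else, the induced BMS-channel BP recursion is identical.

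Next, match this channel to $B_{k,\lambda}$ in \eqref{eqn:b-r-lambda} with $r=k$. Normalizing, the probability of the all-equal configuration is $e^{-\beta}/(2^{k-1}-1+e^{-\beta})$ and every other configuration has probability $1/(2^{k-1}-1+e^{-\beta})$. Setting $(1-\lambda)/2^{k-1} = 1/(2^{k-1}-1+e^{-\beta})$ gives
\begin{align}
  \lambda = -\frac{1-e^{-\beta}}{2^{k-1}-1+e^{-\beta}} \in \left[-\frac 1{2^{k-1}-1}, 0\right),
\end{align}
with the endpoint $-\frac 1{2^{k-1}-1}$ attained at $\beta=\infty$. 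The KS condition $(k-1)d\lambda^2\le 1$ is then exactly $d\le \frac 1{k-1}\bigl(\frac{2^{k-1}-1+e^{-\beta}}{1-e^{-\beta}}\bigr)^2$, which is the claimed formula.

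Now invoke \cite{coja2017information}. It expresses $d_{k,\cond}(\beta)$ as the infimum of $d$ for which some non-trivial fixed point of the BP operator \eqref{eqn:intro:bp-operator} has Bethe free energy exceeding the replica symmetric value \eqref{eqn:nae-sat-rs-pred}. For the upper bound, above KS the trivial fixed point is unstable. Reconstruction then holds, and it is standard (e.g.\ via the second-moment/KS argument already in that literature) that condensation occurs at or before KS. For the lower bound, when $d$ is at most the KS value, \cref{thm:boht} says reconstruction is impossible. So $\lim_k I(\pi,M_k)=0$, and since $M_k$ converges to the maximal fixed point, that fixed point is trivial. Every fixed point is dominated (in degradation order) by the limit of BP iterates started from the perfect channel, so the only fixed point is trivial. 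Its free energy equals the replica symmetric formula, and therefore no condensation occurs below KS.

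It remains to check the parameter ranges. For $k=3,4$, item \ref{item:thm:boht:i} applies for every $\lambda$. For $k=5$ we need $\lambda\ge -\frac1{18}$, i.e.\ $18(1-e^{-\beta})\le 15+e^{-\beta}$, i.e.\ $e^{-\beta}\ge 3/19$, i.e.\ $\beta\le\log\frac{19}{3}$, matching item \ref{item:thm:nae-sat:ii}.

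The main obstacle is the second step: verifying carefully that the optimization in \cite{coja2017information} genuinely reduces to nonexistence of non-trivial BP fixed points on exactly this Poisson hypertree with channel $B_{k,\lambda}$. This includes the NAE sign symmetrization and the inclusion of the endpoint case of equality at KS. I would handle it by citing their theorem in its fixed-point form and checking their assumptions (symmetry, positivity for $\beta<\infty$, and the hard-constraint case $\beta=\infty$ separately).
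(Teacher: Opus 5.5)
Your proposal is correct and follows essentially the same route as the paper. Both arguments identify the fixed-point operator of \cite{coja2017information} with the special BOHT operator $\BP$ at $\lambda=\frac{e^{-\beta}-1}{2^{k-1}-1+e^{-\beta}}$, take $d_{k,\cond}(\beta)\le d_{k,\KS}(\beta)$ from local stability of the trivial solution, and rule out non-trivial fixed points at or below KS via $P=\BP^\ell(P)\le_{\deg}\BP^\ell(\Id)$ together with \cref{thm:boht}; your $k=5$ check ($\lambda\ge -\frac1{18}\iff e^{-\beta}\ge\frac3{19}$) also matches.
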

For random hypergraph bicoloring, \cite{ricci2019typology} predicted tightness of the KS threshold for $k=3,4$. \Cref{thm:nae-sat} gives a rigorous confirmation of this prediction.

\paragraph{Weak recovery for the hypergraph stochastic block model}
Our second application concerns weak recovery in a two-community hypergraph stochastic block model. The special HSBM model, denoted $\HSBM(n,r,a,b)$, generates a random label $X_v\sim \Unif(\{\pm\})$ for each vertex and then places each $r$-uniform hyperedge with probability $\frac a{\binom n{r-1}}$ if all its labels agree and with probability $\frac b{\binom n{r-1}}$ otherwise.
Let
\begin{align} \label{eqn:hsbm-boht-params}
  d = \frac{a + \left(2^{r-1}-1\right)b}{2^{r-1}}, \qquad \lambda = \frac{a-b}{a+\left(2^{r-1}-1\right)b}.
\end{align}
The expected degree of any vertex is $d\pm o(1)$. The range of $\lambda$ is $\left[ -\frac 1{2^{r-1}-1}, 1 \right]$. The Kesten-Stigum (KS) threshold is $(r-1)d\lambda^2=1$.

The weak recovery problem for the HSBM asks whether one can recover from the hypergraph a vertex labeling that is positively correlated with the ground truth labeling $X$.
That is, we observe the hypergraph $G$ generated from the HSBM but not the community labels $X$, and the goal is to design an estimator $\wh X = \wh X(G)$ such that for some $\epsilon>0$,
\begin{align}
  \lim_{n\to \infty} \bP\left[d_H(X, \wh X) < \left( \frac 12-\epsilon\right)n\right] = 1,
\end{align}
where
\begin{align}
  d_H(X,\wh X) = \min_{s\in \{\pm\}} \sum_{v\in V} \mathbbm1\{X_v \ne s \wh X_v\}.
\end{align}
The labels are identifiable only up to a global sign flip.

\cite{pal2021community,stephan2022sparse} proved that weak recovery is possible above the KS threshold. The weak recovery threshold does not always match the KS threshold, but they are known to match in many cases (\cite{yuan2022testing,gu2023weak,gu2024community}): (1) $r=3,4$, $\lambda>0$; (2) $r\le 6$, $|\lambda|$ small enough; (3) any $r$, $\lambda \ge \frac 15$.

Using known relationships between BOHT and the hypergraph stochastic block model (HSBM) (e.g., \cite{gu2023channel}), \cref{thm:boht} expands the known parameter regimes in which KS is tight for weak recovery in the special HSBM.
\begin{theorem}[Weak recovery threshold for the special HSBM] \label{thm:hsbm}
  Consider the special HSBM model $\HSBM(n,r,a,b)$ where $r\in \bZ_{\ge 3}$, $a,b\in \bR_{\ge 0}$.
  Let $d$ and $\lambda$ be defined as in \cref{eqn:hsbm-boht-params}.
  If $(r-1) d \lambda^2 \le 1$ and one of the following is true, then weak recovery is impossible.
  \begin{enumerate}[label=(\roman*)]
    \item\label{item:thm:hsbm:i} $r=3$ or $r=4$;
    \item\label{item:thm:hsbm:ii} $r=5$ and $\lambda\ge -\frac 1{18}$;
    \item\label{item:thm:hsbm:iii} $r=6$ and $\lambda\ge \frac 1{46}$.
  \end{enumerate}
\end{theorem}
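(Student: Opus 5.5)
The plan is to reduce \cref{thm:hsbm} to \cref{thm:boht} via the standard correspondence between sparse HSBMs and broadcasting on hypertrees, as in \cite{gu2023channel}. Fix $r$, $a$, $b$ with $(r-1)d\lambda^2\le 1$ in one of the listed regimes, and let $d,\lambda$ be given by \cref{eqn:hsbm-boht-params}.

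\textbf{Step 1: local weak convergence.} For a fixed vertex $v$ and a fixed radius $k$, the $k$-neighborhood of $v$ in $G\sim\HSBM(n,r,a,b)$, together with its labels, converges in total variation to $(T_k,\sigma_{T_k})$ from $\BOHT(\Pois(d),r,\lambda)$. The number of hyperedges containing $v$ is asymptotically $\Pois(d)$. Given $X_v$, the labels of the other $r-1$ vertices of a hyperedge through $v$ are distributed proportionally to $a$ when all $r$ labels agree and to $b$ otherwise. This law is exactly $B_{r,\lambda}(\cdot\mid X_v)$ with the parameters in \cref{eqn:hsbm-boht-params}:
\begin{align}
\frac{a}{2^{r-1}d}=\lambda+\frac{1-\lambda}{2^{r-1}}, \qquad \frac{b}{2^{r-1}d}=\frac{1-\lambda}{2^{r-1}}.
\end{align}
Cycles within distance $k$ occur with probability $o(1)$.

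\textbf{Step 2: from non-reconstruction to impossibility of weak recovery.} By \cref{thm:boht}, $\lim_k I(\sigma_\rho;T_k,\sigma_{L_k})=0$. Following the argument of \cite{gu2023channel}, which adapts Mossel--Neeman--Sly, it suffices to show that for two uniformly random vertices $u,v$,
\begin{align}
I(X_u;X_v\mid G)\to 0.
\end{align}
By symmetry this implies that no estimator achieves correlation bounded away from zero; equivalently, $\bE|\bP(X_u=X_v\mid G)-\tfrac12|\to 0$ rules out weak recovery.

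To bound this mutual information, reveal $X_{\partial B_k(u)}$ together with the graph. Conditioning on more information can only increase $I(X_u;X_v\mid G,\cdot)$ in the relevant direction. The key claim is that, conditioned on the boundary labels and the graph, $X_u$ is asymptotically independent of everything outside $B_k(u)$, including $X_v$, because $v\notin B_k(u)$ with high probability. Hence
\begin{align}
I(X_u;X_v\mid G)\le I(X_u;G,X_{\partial B_k(u)})+o(1)\to I(\sigma_\rho;T_k,\sigma_{L_k}).
\end{align}
Letting $k\to\infty$ gives the result.

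\textbf{Main obstacle.} The key claim is the hard step. Conditioning on the HSBM graph induces weak dependence between the inside and the outside of the ball, through non-edges and global label balance. Handling this requires the Mossel--Neeman--Sly-style estimate that the conditional law of the interior given the boundary is approximately a product with the exterior. For hypergraphs this estimate was already established in \cite{gu2023channel} and the prior HSBM works. I would invoke that theorem directly: for binary symmetric HSBMs, weak recovery is impossible whenever the corresponding BOHT has non-reconstruction. With that, the theorem follows immediately from \cref{thm:boht}.
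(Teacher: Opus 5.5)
Your proposal is correct and follows the paper's route. The paper combines \cite[Theorem 5.15]{gu2023channel}, which says non-reconstruction for $\BOHT(\Pois(d),r,\lambda)$ implies impossibility of weak recovery for $\HSBM(n,r,a,b)$ with $d,\lambda$ as in \cref{eqn:hsbm-boht-params}, with \cref{thm:boht}; your Steps 1--2 just unpack that cited reduction, and your check of $B_{r,\lambda}$ is correct, though the phrase ``conditioning on more information can only increase'' the mutual information should be replaced by the chain rule $I(X_u;X_v\mid G)\le I(X_u;X_v,X_{\partial B_k(u)}\mid G)$, which is harmless since you defer to the cited theorem anyway.
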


\cite{mossel2025weak} proved that the weak recovery threshold for the HSBM matches the detection threshold, the threshold at which the HSBM is indistinguishable from an Erd\H{o}s-R\'enyi hypergraph with the same average degree. Thus \cref{thm:hsbm} also establishes the detection threshold in these regimes.

\subsection{Our method} \label{sec:intro:method}
We next describe the method for proving \cref{thm:boht}. First we briefly review previous approaches to the reconstruction problem for BOHT.

\paragraph{Previous methods}
Two approaches have been effective in establishing tightness of KS in reconstruction problems on (hyper)trees.

The first proves contraction of information measures using strong data processing inequalities (SDPI) and subadditivity. \cite{borgs2006kesten} can be seen as an early example of this approach.
\cite{kulske2009symmetric,gu2023non} studied the Potts model using symmetric KL (SKL) information and mutual information respectively. \cite{gu2023weak} generalized this method to BOHT by introducing a multi-terminal version of SDPI, and proved tight non-reconstruction results using SKL information and $\chi^2$-information.
In this approach, one needs to find an information measure of channels that is subadditive under $\star$-product, and then prove that the SDPI contraction coefficient matches the KS threshold. The resulting non-reconstruction proofs are simple, but the choice of information measure is mysterious; when the contraction coefficient does not match the KS threshold, the method does not yield tight non-reconstruction results.

The second is Sly's method, which uses large degree asymptotics and robust non-reconstruction. This method was developed in \cite{sly2009reconstruction,sly2011reconstruction,mossel2023exact} for the random coloring model and the Potts model, and has been successful in other models such as the binary asymmetric model (\cite{liu2019large}) and BOHT (\cite{gu2024community}). It uses large degree asymptotics to prove that $\lim_{k\to \infty} C\left(\BP^k(\Id)\right)$ (for a certain information measure $C$) can be arbitrarily small when $d$ is large, and then uses a Taylor expansion to prove that for weak enough channels $P$, $C\left(\BP^k(P)\right)$ tends to $0$ as $k$ goes to infinity. This method is powerful and can establish very precise behavior of the evolution of $C\left(\BP^k(\Id)\right)$, thus giving tight non-reconstruction results. However, it requires a large degree assumption, and the Taylor expansion step requires heavy derivation.

\paragraph{Our method}
Our method combines an improved robust non-reconstruction analysis with a computer-assisted rigorous population dynamics algorithm, improving both parts of Sly's method and overcoming limitations of the previous approaches.

Let us state a relaxed version of Sly's framework in our setting. Recall our goal is to prove that
\begin{align}
  \lim_{k\to \infty} C\left(\BP^k(\Id)\right) = 0
\end{align}
for a certain information measure $C$. We use $C_{\chi^2}$, the $\chi^2$-capacity.
The proof strategy consists of two steps.
\begin{enumerate}
  \item We first prove a robust non-reconstruction result: for some robustness parameter $\epsilon>0$, for all BMS channels $P$ with $C_{\chi^2}(P) < \epsilon$, we have
  \begin{align}
    \lim_{k\to \infty} C_{\chi^2}\left(\BP^k(P)\right) = 0.
  \end{align}
  \item Then, we show that there exists a $k$ such that $C_{\chi^2}\left(\BP^k(\Id)\right) < \epsilon$.
\end{enumerate}
We improve both steps, leading to stronger non-reconstruction results.

\paragraph{Improved robust non-reconstruction}
In Sly's method, Step 1 is usually proved by a Taylor expansion of the BP operator.
Because the expansion is complicated, it can be unclear what values of $\epsilon$ it provides, which in turn makes Step 2 harder.

We improve the robust non-reconstruction analysis by exploiting a limitation of previous methods.
Recall the definition of the BP operator (\cref{eqn:intro:bp-operator}).
The SDPI-based method of \cite{gu2023weak} requires the information measure to contract at the KS rate, i.e.,
\begin{align} \label{eqn:intro:robust-recon-dream}
  C_{\chi^2}\left(P^{\times (r-1)} \circ B\right) \le (r-1) \lambda^2 C_{\chi^2}(P),
\end{align}
which is true in some but not all regimes.
For example, the Taylor expansion analysis (\cite{gu2024community}) shows that for small $C_{\chi^2}(P)$, we have
\begin{align} \label{sec:intro:method:bp-step-1}
  C_{\chi^2}\left(P^{\times (r-1)} \circ B\right) \le
  (r-1) \lambda^2 C_{\chi^2}(P) + c \lambda^2 \left(C_{\chi^2}(P)\right)^2
\end{align}
for some constant $c$ depending on $r$.
This $c$ may be positive, and in such cases \cref{eqn:intro:robust-recon-dream} cannot hold.

Our key observation is that the positive second-order term in \cref{sec:intro:method:bp-step-1} can be compensated for by a negative second-order term provided by the $\star$-power part of the BP operator.
Recall that the BP operator takes a mixture of $\star$-powers of these $P^{\times (r-1)} \circ B$ channels.
In the SDPI-based methods, the $\star$-powers are handled using subadditivity, i.e.,
\begin{align}
  C_{\chi^2}\left(P^{\star b}\right) \le b C_{\chi^2}(P).
\end{align}
This subadditivity is tight to the first order, but is quite lossy in the second order.
In fact, we have
\begin{align} \label{sec:intro:method:bp-step-2}
  C_{\chi^2}\left(P^{\star b}\right) \le 1-\left(1-C_{\chi^2}(P)\right)^b
  \le b C_{\chi^2}(P) - c' \binom b2 \left(C_{\chi^2}(P)\right)^2,
\end{align}
for some constant $c'>0$.
That is, $\star$-power gives us some advantage in the second order.
Combining \cref{sec:intro:method:bp-step-1,sec:intro:method:bp-step-2}, the second-order advantage from the $\star$-power part can outweigh the disadvantage in the contraction part. In certain regimes (such as $r=3$, any $\lambda$ or $r=5$, $\lambda>0$), the advantage is strong enough to directly imply impossibility of reconstruction.
In other regimes, it implies robust non-reconstruction with a useful robustness parameter $\epsilon$. For example, for $r=4$ and $5$, we can take $\epsilon=0.2$.

\paragraph{Rigorous population dynamics}
In Sly's method, Step 2 is proved using a Gaussian approximation, which requires a large degree assumption. We avoid this limitation with a rigorous computer program that bounds $C_{\chi^2}\left(\BP^k(\Id)\right)$.
This requires non-trivial effort, as the channel $\BP^k(\Id)$ can have countable support when the degree distribution $D$ has countable support. Even if $D=\mathbbm{1}_d$ is a point distribution, the support size of $\BP^k(\Id)$ can grow doubly exponentially in $k$, making it infeasible to maintain the exact channel on a realistic computer even for very small values such as $k=5$.
Physicists (e.g., \cite{abou1973selfconsistent,mezard2001bethe,mezard2006reconstruction}) developed a non-rigorous approximation called population dynamics for the channel $\BP^k(\Id)$.
In this method, a small-support approximation channel $P_k$ is computed for $\BP^k(\Id)$.
In each BP step (computing $P_{k+1}$ from $P_k$), one samples from the support of the channel $P_k$, applies the BP equations, and treats the resulting collection as an approximation of $P_{k+1}$.
Although this method can provide good estimates of the reconstruction threshold, it has limited theoretical guarantees because the sampling process is inherently noisy.

We develop a rigorous population dynamics algorithm to bound $\BP^k(\Id)$.
Instead of computing an approximation channel of $\BP^k(\Id)$, we compute a small-support upper bound $P_k$ of $\BP^k(\Id)$ in the sense of less-noisy preorder, an information-theoretic method for comparing information channels.
When computing $P_{k+1}$ from $P_k$, instead of taking a sample, we compute a small-support upper bound of $\BP(P_k)$ using known properties of the less-noisy preorder.
In this way, we can rigorously upper bound $C_{\chi^2}\left(\BP^k(\Id)\right)$ for finite values of $k$.

One final issue is parameter coverage.
We would like to prove non-reconstruction results for a continuous set of parameters $\lambda\in [\lambda_*,\lambda^*]$, but we are only able to do a finite amount of computation.
Our observation is that in Step 2, we do not need to confine the computation to points below the Kesten-Stigum threshold $(r-1) d \lambda^2 = 1$.
Slightly above the Kesten-Stigum threshold, reconstruction is known to be possible, but it may still be true that
\begin{align}
  \lim_{k\to \infty} C_{\chi^2}\left(\BP^k(\Id)\right)
\end{align}
is less than the robustness parameter $\epsilon$. Step 1 works only below the Kesten-Stigum threshold, so this observation alone does not prove non-reconstruction.
By a monotonicity argument, if
\begin{align}
  \lim_{k\to \infty} C_{\chi^2}\left(\BP_{r,d,\lambda}^k(\Id)\right) < \epsilon
\end{align}
for some $(r-1) d \lambda^2 > 1$, then
\begin{align}
  \lim_{k\to \infty} C_{\chi^2}\left(\BP_{r,d',\lambda'}^k(\Id)\right) < \epsilon
\end{align}
for all $(d',\lambda')$ where $d'\le d$, $\sgn(\lambda')=\sgn(\lambda)$, and $|\lambda'| \le |\lambda|$.
This allows a finite set of parameter computations to cover the whole interval, proving the desired result.

We remark that previously a similar rigorous population dynamics method has been used in \cite{gu2020broadcasting} for estimating bounds on the limit information in the Ising model on a tree.

\subsection{Organization} \label{sec:intro:org}
In \cref{sec:prelim}, we review preliminaries on information channels and broadcasting on (hyper)trees.
In \cref{sec:robust}, we present our improved robust non-reconstruction analysis.
In \cref{sec:pop}, we present our rigorous population dynamics algorithm.
In \cref{sec:special-boht}, we apply the methods developed in \cref{sec:robust,sec:pop} to the special BOHT model and prove \cref{thm:boht}.
In \cref{sec:app}, we apply \cref{thm:boht} to random NAE-SAT, random hypergraph bicoloring, and the special HSBM, proving \cref{thm:nae-sat,thm:hsbm}.
In \cref{sec:discuss}, we discuss further directions.

\section{Preliminaries} \label{sec:prelim}
\subsection{Information channels} \label{sec:prelim:bms}
We review basic definitions and facts about information channels, especially BMS channels. See \cite{richardson2008modern,polyanskiy2025information} for more background.

A channel (or information channel) $P: \cX \to \cY$ is a Markov kernel from $\cX$ to $\cY$.

Let $P: \cX \to \cY$, $Q: \cY \to \cZ$ be two channels. Their composition is denoted $Q\circ P: \cX \to \cZ$.

Let $P: \cX \to \cY$, $Q: \cX \to \cZ$ be two channels with the same input alphabet. Their $\star$-product is the channel $P\star Q: \cX \to \cY\times \cZ$, such that for $x\in \cX$ and measurable subsets $E\subseteq \cY$, $F\subseteq \cZ$ we have $(P\star Q)(E\times F | x) = P(E|x) Q(F | x)$.
We use $P^{\star n}$ to denote the $n$-th ($n\ge 0$) $\star$-power of $P$.
When $n=0$, $P^{\star 0}$ is the trivial channel $\cX \to \{*\}$.

Let $P: \cX \to \cZ$, $Q: \cY \to \cW$ be two channels. Their product is the channel $P\times Q: \cX \times \cY \to \cZ \times \cW$, such that for $x\in \cX$, $y\in \cY$ and measurable subsets $E\subseteq \cZ$, $F\subseteq \cW$ we have $(P\times Q)(E\times F | x,y) = P(E|x) Q(F|y)$.
We use $P^{\times n}$ to denote the $n$-th ($n\ge 0$) tensor power of $P$.
When $n=0$, $P^{\times 0}$ is the trivial channel $\{*\} \to \{*\}$.

Let $\cA = \{P_I: \cX \to \cY_I | I\in \cI\}$ be a collection of information channels with the same input alphabet and $\mu$ be a distribution on $\cI$. Then we define a channel $\bE_{I\sim \mu} P_I: \cX \to \bigsqcup_{I\in \cI} \cY_I$ that maps $x\in \cX$ to $(I, Y)$, where $I\sim \mu$ and $Y\sim P_I(\cdot | x)$.
This is called a mixture of $\cA$.

Let $P: \cX \to \cY$, $Q: \cX \to \cZ$ be two channels with the same input alphabet. We say $P$ is more degraded than $Q$, denoted $P \le_{\deg} Q$, if there exists a channel $R: \cZ \to \cY$ such that $P=R\circ Q$.
In other words, $P\le_{\deg} Q$ if and only if we can simulate $P$ given access to $Q$.
We say $P$ and $Q$ are equivalent if $P \le_{\deg} Q$ and $Q \le_{\deg} P$.

A binary memoryless symmetric (BMS) channel is a binary-input channel $P: \{\pm\} \to \cY$ together with a measurable involution $\sigma: \cY \to \cY$ such that for every measurable subset $E\subseteq \cY$, we have $P(E|+) = P(\sigma(E)|-)$. The involution is often evident and omitted.

Examples of BMS channels include the BSC channel $\BSC_\delta: \{\pm\} \to \{\pm\}$, $\delta\in [0, 1]$, defined by
\begin{align}
  \BSC_\delta(y|x) = \left\{
    \begin{array}{ll}
      1-\delta, & y=x, \\
      \delta, & y=-x,
    \end{array}
  \right.
\end{align}
and the BEC channel $\BEC_\epsilon: \{\pm\} \to \{\pm,*\}$, $\epsilon\in [0, 1]$, defined by
\begin{align}
  \BEC_\epsilon(y|x) = \left\{
    \begin{array}{ll}
      1-\epsilon, & y=x, \\
      0, & y=-x, \\
      \epsilon, & y=*.
    \end{array}
  \right.
\end{align}
Note that $\BEC_\epsilon$ is equivalent to a mixture of BSCs. That is, $\BEC_\epsilon = \epsilon \BSC_{1/2} + (1-\epsilon) \BSC_0$.

In general, BMS channels are equivalent to mixtures of BSCs.
That is, for any BMS channel $P$, there is a unique distribution $\mu_P$ on the interval $\left[0, \frac 12\right]$ such that $P = \bE_{\Delta_P\sim \mu_P} \BSC_{\Delta_P}$.
Therefore, equivalence classes of BMS channels are in one-to-one correspondence with distributions on the interval $\left[0, \frac 12\right]$.
We say $\Delta_P$ is the $\Delta$-component of $P$.
The $\theta$-component of $P$ is a random variable $\theta_P$ on $[0,1]$ satisfying $\theta_P = 1-2\Delta_P$.

For an $f$-divergence, a channel $P: \cX \to \cY$ and a distribution $\mu$ on $\cX$, we write $I_f(\mu, P)$ to be the $f$-mutual information between $X$ and $Y$, where $X\sim \mu$ and $Y\sim P(\cdot | X)$.

The following $f$-divergences are useful. Let $\mu,\nu$ be two distributions on the same measurable space $\cX$.
\begin{itemize}
  \item Total variation (TV) distance
  \begin{align}
    \TV(\mu, \nu) = \sup_{\cA\subseteq \cX} \left| \mu(\cA) - \nu(\cA) \right|,
  \end{align}
  where $\cA$ ranges over measurable subsets of $\cX$.
  TV distance is an $f$-divergence with $f(x) = \frac 12 |x-1|$.
  \item Kullback-Leibler (KL) divergence
  \begin{align}
    D(\mu \| \nu) = \int \frac{d\mu}{d\nu}\log \frac{d\mu}{d\nu} d\nu.
  \end{align}
  \item $\chi^2$-divergence
  \begin{align}
    \chi^2(\mu \| \nu) = \int \left( \frac{d\mu}{d\nu}-1 \right)^2 d\nu.
  \end{align}
\end{itemize}

Let $\pi = \Unif(\{\pm\})$ and $P$ be a BMS channel. Let $\Delta_P$ be the $\Delta$-component of $P$ and $\theta_P$ be the $\theta$-component of $P$. We define the following information measures of $P$.
\begin{itemize}
  \item Capacity
  \begin{align}
    C(P) = I(\pi, P) = \bE \left[\log 2 + \Delta_P \log \Delta_P + (1-\Delta_P) \log (1-\Delta_P) \right].
  \end{align}
  \item $\chi^2$-capacity
  \begin{align}
    C_{\chi^2}(P) = I_{\chi^2}(\pi, P) = \bE (1-2\Delta_P)^2 = \bE \theta_P^2.
  \end{align}
\end{itemize}

Let $\mu$, $\nu$ be two distributions on $\bR$. We say $\mu$ second-order stochastically dominates $\nu$ if there is a coupling $(X,Y)$ of $\mu$ and $\nu$ such that $\bE[X|Y] \ge Y$ almost surely.

The following lemma shows that degradation between BMS channels can be stated using second-order stochastic dominance between their $\Delta$-variables. A proof can be found in e.g., \cite[Theorem 4.74]{richardson2008modern}.
\begin{lemma} \label{lem:bms-deg-coupling}
  Let $P$ and $Q$ be two BMS channels.
  Then $Q\le_{\deg} P$ if and only if $\theta_P$ second-order stochastically dominates $\theta_Q$.
\end{lemma}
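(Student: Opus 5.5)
The lemma follows from two facts. Every BMS channel is equivalent to its ``$\theta$-output'' channel. Degradation between such channels is governed by a martingale (Blackwell-type) coupling. The plan is to reduce both channels to a canonical form and prove the two directions separately.

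First, the canonical form. Given a BMS channel $P=\bE_{\Delta_P\sim\mu_P}\BSC_{\Delta_P}$, view it as outputting a pair $(\theta,S)$. Here $\theta=\theta_P$ is drawn independently of the input $x$, and $S\in\{\pm\}$ satisfies $\bP[S=x\mid \theta]=\frac{1+\theta}{2}$. The statistic $T=\theta S\in[-1,1]$ is sufficient, since the posterior mean of $x$ given the output is exactly $T$. So $P$ is equivalent to the channel $x\mapsto T_P$. Under input $x$, $T_P$ has density $\frac{1+xt}{1}$ with respect to the symmetrized law $\nu_P$ of $T_P$ under $\pi$. Moreover, $|T_P|$ has the law of $\theta_P$, and under $\pi$ the sign of $T_P$ is independent of $|T_P|$.

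($\Leftarrow$) Suppose $(X,Y)$ is a coupling of $\theta_P$ and $\theta_Q$ with $\bE[X\mid Y]\ge Y$. From $P$ we simulate $Q$ as follows. Observe $(\theta_P, S)$ and sample $Y$ from the conditional law of $Y$ given $X=\theta_P$. Then we output $(Y, S')$, where $S'=S$ with probability $\frac12\bigl(1+\frac{Y}{\bE[X\mid Y]}\bigr)$ and $S'=-S$ otherwise. This probability is well defined because $0\le Y\le \bE[X\mid Y]$. To verify correctness, condition on $Y$. By the tower property, $\bE[\theta_P\mid Y]\cdot\frac{Y}{\bE[X\mid Y]}=Y$. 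Hence the probability that $S'=x$ given $Y$ is $\frac{1+Y}{2}$, which matches $Q$. Independence of $Y$ from $x$ holds because $\theta_P$ is independent of $x$. We must take care when $\bE[X\mid Y]=0$. In that case $Y=0$ and we just output a uniform $S'$.

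($\Rightarrow$) Suppose $Q=R\circ P$. Work with the canonical outputs and let $T_Q$ be the posterior mean $\bE[x\mid \text{output of }Q]$. Since $Q$'s output is a garbling of $P$'s output, which is independent of $x$ given $P$'s output, we get
\begin{align}
  T_Q=\bE[T_P\mid \text{output of }Q]
\end{align}
under the joint law with $x\sim\pi$. Hence $|T_Q|\le \bE[|T_P|\,\mid\, \text{output of } Q]$. Conditioning further on $|T_Q|$ gives $\bE[|T_P|\mid |T_Q|]\ge |T_Q|$. Since $|T_P|\sim\theta_P$ and $|T_Q|\sim\theta_Q$ under $\pi$, this pair is the required coupling.

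The main obstacle is measure-theoretic bookkeeping. We need sufficiency of $T$ for general (possibly non-discrete) output spaces, so that replacing $P$ by the $T_P$-channel preserves the degradation relation in both directions. We also need to construct the conditional sampling kernel via regular conditional distributions on $[0,1]$. Both are standard on Polish spaces, which suffices since $\theta\in[0,1]$.
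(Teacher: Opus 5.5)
Your proof is correct, but it takes a different route from the one the paper relies on. The paper gives no argument of its own and cites Theorem 4.74 of \cite{richardson2008modern}. There, degradation of BMS channels is characterized analytically, through an ordering of integrated distribution functions of $|D|=\theta$, equivalently $\bE\phi(\theta_Q)\le\bE\phi(\theta_P)$ for all convex nondecreasing $\phi$ on $[0,1]$. Identifying that statement with the coupling definition of second-order dominance used here implicitly requires a Strassen-type theorem for the increasing convex order.

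You bypass this by working with the coupling directly:
\begin{itemize}
  \item The ``only if'' direction is the martingale identity $T_Q=\bE[T_P\mid Y_Q]$, followed by conditional Jensen and the tower property.
  \item The ``if'' direction is an explicit degrading kernel. You resample $Y$ given $X=\theta_P$ from the coupling, then pass the sign through a BSC of bias $Y/\bE[X\mid Y]$, using that the $\theta$-biases of cascaded BSCs multiply.
\end{itemize}
Your argument is self-contained and constructive. It covers arbitrary (non-discrete) $\mu_P$ using only regular conditional distributions on $[0,1]$, and it proves exactly the formulation the paper states. The cited analytic criterion is the more convenient test for checking degradation between concrete channels, but it is one step removed from the paper's definition.

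You can also drop one of your stated obstacles: sufficiency of $T$ for general output spaces is not needed. You only ever manipulate the canonical $(\theta,S)$ outputs. Transferring $\le_{\deg}$ between the original and canonical channels needs only two things: the equivalence of a BMS channel with its mixture-of-BSCs form, which the paper takes as background, and transitivity of degradation.
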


\subsection{Less-noisy preorder} \label{sec:prelim:less-noisy}
Degradation is a useful channel preorder, but it is sometimes too strong. For example, for the channel $\BSC_\delta$, the largest $\epsilon$ satisfying $\BSC_\delta \le_{\deg} \BEC_\epsilon$ is $\epsilon=2\delta$. However, $\BSC_\delta$ contracts mutual information at least as strongly as $\BEC_{1-(1-2\delta)^2}$, which is a strictly weaker channel than $\BEC_{2\delta}$ when $0<\delta<\frac 12$.
\cite{korner1977comparison} defined the less-noisy preorder, a weaker preorder than degradation that is more suitable for certain problems.
\begin{definition}[Less-noisy preorder] \label{defn:less-noisy}
  For two channels $P: \cX \to \cY$, $Q: \cX\to \cZ$, we say $P$ is less noisy than $Q$, denoted $Q \le_{\ln} P$, if for all channels $R: \cW \to \cX$ and all distributions $\mu$ on $\cW$, we have
  \begin{align} \label{eqn:defn:less-noisy-kl}
    I(\mu, Q\circ R) \le I(\mu, P\circ R).
  \end{align}
\end{definition}
The less-noisy preorder is not specific to mutual information. \cite[Theorem 1]{makur2018comparison} showed that replacing \cref{eqn:defn:less-noisy-kl} with
\begin{align} \label{eqn:defn:less-noisy-chi2}
  I_{\chi^2}(\mu, Q\circ R) \le I_{\chi^2}(\mu, P\circ R)
\end{align}
gives the same preorder.
In fact, the less-noisy preorder can be defined using any non-linear operator convex $f$-divergence (\cite[Theorem 3.1]{makur2019information}).

Both the degradation preorder and the less-noisy preorder respect pre-composition, $\star$-product, product, and mixture, summarized as follows. The proof that the less-noisy preorder respects product can be found in e.g., \cite[Prop.~16]{polyanskiy2017strong}.
\begin{itemize}
  \item (Pre-composition) Let $P: \cX \to \cY$, $Q: \cX \to \cZ$ be two channels. If $P \le_{\deg} Q$, then $P\circ R\le_{\deg} Q\circ R$ for any $R: \cW\to \cX$. If $P \le_{\ln} Q$, then $P\circ R\le_{\ln} Q\circ R$ for any $R: \cW\to \cX$.
  \item ($\star$-product) Let $P_1: \cX \to \cY_1$, $P_2: \cX \to \cY_2$, $Q_1: \cX \to \cZ_1$, $Q_2: \cX \to \cZ_2$ be four channels with the same input alphabet. If $P_1 \le_{\deg} Q_1$ and $P_2 \le_{\deg} Q_2$, then $P_1 \star P_2 \le_{\deg} Q_1 \star Q_2$. If $P_1 \le_{\ln} Q_1$ and $P_2 \le_{\ln} Q_2$, then $P_1 \star P_2 \le_{\ln} Q_1 \star Q_2$.
  \item (Product) Let $P_1: \cX_1 \to \cY_1$, $P_2: \cX_2 \to \cY_2$, $Q_1: \cX_1 \to \cZ_1$, $Q_2: \cX_2 \to \cZ_2$ be four channels. If $P_1 \le_{\deg} Q_1$ and $P_2 \le_{\deg} Q_2$, then $P_1 \times P_2 \le_{\deg} Q_1 \times Q_2$. If $P_1 \le_{\ln} Q_1$ and $P_2 \le_{\ln} Q_2$, then $P_1 \times P_2 \le_{\ln} Q_1 \times Q_2$.
  \item (Mixture) Let $\cA = \{P_I: \cX \to \cY_I | I\in \cI\}$ and $\cB = \{Q_I: \cX \to \cZ_I | I\in \cI\}$ be two collections of channels with the same index set $\cI$. Let $\mu$ be a distribution on $\cI$. If $P_I\le_{\deg} Q_I$ for all $I\in \cI$, then $\bE_{I\sim \mu} P_I \le_{\deg} \bE_{I\sim \mu} Q_I$. If $P_I\le_{\ln} Q_I$ for all $I\in \cI$, then $\bE_{I\sim \mu} P_I \le_{\ln} \bE_{I\sim \mu} Q_I$.
\end{itemize}

The following lemma generalizes one side of \cref{lem:bms-deg-coupling} to the less-noisy preorder.
A proof can be found in the proof of \cite[Lemma 2]{roozbehani2019low}.
We include a shorter computational proof.
\begin{lemma} \label{lem:bms-ln-coupling}
  Let $P$ and $Q$ be two BMS channels.
  If $\theta_P^2$ second-order stochastically dominates $\theta_Q^2$, then $Q\le_{\ln} P$.
\end{lemma}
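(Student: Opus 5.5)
The plan is to use the $\chi^2$ characterization of the less-noisy preorder, i.e.\ \cref{eqn:defn:less-noisy-chi2} (\cite[Theorem 1]{makur2018comparison}). We then compute $I_{\chi^2}(\mu, Q\circ R)$ explicitly for a BMS channel $Q$ and show it equals $\bE\, f(\theta_Q^2)$ for a function $f$ that is nondecreasing and convex. The second-order stochastic dominance hypothesis then finishes the argument via Jensen's inequality.

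\textbf{Step 1 (explicit formula).} Fix a channel $R:\cW\to\{\pm\}$ and a distribution $\mu$ on $\cW$. Let $W\sim\mu$ and $X\sim R(\cdot|W)$, and set $m_w = R(+|w)-R(-|w)\in[-1,1]$ and $\bar m=\bE_\mu m_W$. Realize $Q$ as $\bE_{\Delta_Q}\BSC_{\Delta_Q}$, so its output is $(\theta,y)$, where $\theta=\theta_Q$ is drawn independently of the input. Conditioned on $\theta$, we have
\begin{align}
  \bP(y=\pm \mid W=w,\theta) = \tfrac{1\pm\theta m_w}{2}, \qquad \bP(y=\pm\mid\theta)=\tfrac{1\pm\theta\bar m}{2}.
\end{align}
The $\chi^2$-information between $W$ and $(\theta,y)$ is $\sum_y \bE_W\bigl[(\bP(y|W,\theta)-\bP(y|\theta))^2\bigr]/\bP(y|\theta)$, averaged over $\theta$. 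The $\theta$-label carries no information about $W$, so it contributes nothing beyond this average. The numerator equals $\theta^2\operatorname{Var}_\mu(m_W)/4$ for each $y$. Summing the two terms gives
\begin{align}
  I_{\chi^2}(\mu,Q\circ R)=\bE\, f(\theta_Q^2),\qquad f(t)=\frac{V t}{1-a t},\quad V=\operatorname{Var}_\mu(m_W),\ a=\bar m^2\in[0,1].
\end{align}
The same formula holds for $P$ with $\theta_P$ in place of $\theta_Q$.

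\textbf{Step 2 (properties of $f$).} On $[0,1]$, with $V\ge 0$ and $0\le a<1$, the function $f$ is nondecreasing and convex. Indeed, $f(t)=\frac{V}{a}\bigl(\frac{1}{1-at}-1\bigr)$ for $a>0$, and $f$ is linear when $a=0$. The edge case $a=1$ forces $|m_w|=1$ $\mu$-a.s.\ with a common sign, hence $V=0$ and $f\equiv 0$ (interpreting $0/0$ at $t=1$ as $0$). In that case the inequality is trivial.

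\textbf{Step 3 (conclusion).} Take a coupling of $\theta_P^2$ and $\theta_Q^2$ with $\bE[\theta_P^2\mid\theta_Q^2]\ge\theta_Q^2$ a.s. Conditional Jensen (convexity) and then monotonicity of $f$ give
\begin{align}
  \bE f(\theta_P^2) \ge \bE f\bigl(\bE[\theta_P^2\mid\theta_Q^2]\bigr)\ge \bE f(\theta_Q^2),
\end{align}
which is exactly \cref{eqn:defn:less-noisy-chi2}. Since $R$ and $\mu$ were arbitrary, $Q\le_{\ln}P$.

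The main obstacle I expect is Step 1. It requires checking carefully that the mixture structure (the revealed $\theta$) makes the $\chi^2$-information an average over $\theta$, and handling general measurable $\cW$. For the latter, sums over $w$ become integrals against $\mu$, and only $\operatorname{Var}_\mu(m_W)$ and $\bar m$ enter, so nothing changes. The boundary case where $1-a t$ vanishes must also be treated, as done in Step 2.
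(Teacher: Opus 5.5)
Your proof is correct and takes essentially the same route as the paper: both invoke the $\chi^2$ characterization of \cite[Theorem 1]{makur2018comparison} and reduce the claim to convexity of $t\mapsto \frac{t}{1-ct}$ on $[0,1]$. The paper uses the two-input-distribution form $\chi^2(\mu Q\|\nu Q)\le \chi^2(\mu P\|\nu P)$ and obtains $\bE\bigl[4\theta^2(x-y)^2/(1-(1-2y)^2\theta^2)\bigr]$; your $\bE\bigl[V\theta^2/(1-\bar m^2\theta^2)\bigr]$ is exactly the $w$-average of that expression with $\nu$ the mean input law, and your explicit monotonicity check (needed because dominance only gives $\bE[\theta_P^2\mid\theta_Q^2]\ge\theta_Q^2$) and your handling of the case $\bar m^2=1$ are details the paper leaves implicit.
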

\begin{proof}
  By \cite[Theorem 1]{makur2018comparison}, $Q \le_{\ln} P$ is equivalent to
  \begin{align}
    \chi^2(\mu Q \| \nu Q) \le \chi^2(\mu P \| \nu P)
  \end{align}
  for all distributions $\mu,\nu$ on $\{\pm\}$.
  Let $\mu = (x,1-x)$, $\nu = (y,1-y)$. Then
  \begin{align}
    \chi^2(\mu P \| \nu P) &= \bE_{\Delta_P} \chi^2(\mu \BSC_{\Delta_P} \| \nu \BSC_{\Delta_P}) \\
    \nonumber &= \bE_{\Delta_P} \left[\frac{4 \theta_P^2 (x-y)^2}{1-(1-2y)^2\theta_P^2}\right].
  \end{align}
  Therefore, to prove that $Q\le_{\ln} P$, it suffices to prove that the function
  \begin{align}
    f_y(t) = \frac{t}{1-(1-2y)^2 t}
  \end{align}
  is convex on $[0, 1]$ for any $0\le y\le 1$.
  Direct computation shows that
  \begin{align}
    f''_y(t) = \frac{2(1-2y)^2}{(1-(1-2y)^2 t)^3} \ge 0.
  \end{align}
  This finishes the proof.
\end{proof}

The following is a direct corollary of \cref{lem:bms-ln-coupling}, and can be found in \cite[Lemma 2]{roozbehani2019low}.
\begin{corollary} \label{lem:bms-ln-bec}
  For any BMS channel $P$ we have
  \begin{align}
    \BSC_{\frac 12 \left(1-\sqrt{C_{\chi^2}(P)}\right)} \le_{\ln} P\le_{\ln} \BEC_{1-C_{\chi^2}(P)}.
  \end{align}
\end{corollary}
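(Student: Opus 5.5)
Both inequalities follow from \cref{lem:bms-ln-coupling}: for each side it suffices to exhibit a coupling that witnesses second-order stochastic dominance between the relevant $\theta^2$ variables. Write $c = C_{\chi^2}(P) = \bE\theta_P^2$.

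The first step is to record the $\theta$-components of the two comparison channels. For $\BSC_{\frac 12(1-\sqrt c)}$ we have $\Delta = \frac12(1-\sqrt c)$, so $\theta = \sqrt c$ deterministically and $\theta^2 = c$. For $\BEC_{1-c} = (1-c)\BSC_{1/2} + c\,\BSC_0$, the variable $\theta$ equals $1$ with probability $c$ and $0$ with probability $1-c$. Hence $\theta^2$ is also $\mathrm{Ber}(c)$ on $\{0,1\}$, with mean $c$.

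For the left inequality, $\BSC_{\frac12(1-\sqrt c)} \le_{\ln} P$, we need $\theta_P^2$ to second-order dominate the constant $c$. Couple $X = \theta_P^2$ with $Y \equiv c$. Then $\bE[X\mid Y] = \bE\theta_P^2 = c = Y$, as required.

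For the right inequality, $P \le_{\ln} \BEC_{1-c}$, we need a Bernoulli variable $X$ on $\{0,1\}$ with $\bE[X \mid \theta_P^2] \ge \theta_P^2$. Given $Y = \theta_P^2 \in [0,1]$, let $X = 1$ with conditional probability $Y$ and $X = 0$ otherwise. Then $\bE[X\mid Y] = Y$, and $\bP(X=1) = \bE Y = c$. So $X$ has exactly the law of $\theta^2_{\BEC_{1-c}}$, and the coupling is valid.

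Neither direction is a real obstacle. The only points to check are that the BEC is represented as a BSC mixture with the correct weights, and that the constructed couplings have the correct marginals. Both use the identity $\bE\theta_P^2 = c$.
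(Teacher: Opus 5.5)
Your proof is correct and follows the paper's route. The paper gives no separate proof and simply calls this a direct corollary of \cref{lem:bms-ln-coupling}; your two couplings supply exactly that verification, namely the constant $c$ against $\theta_P^2$, and a $\{0,1\}$-valued variable with conditional mean $\theta_P^2$ whose law is that of $\theta^2_{\BEC_{1-c}}$.
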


\subsection{Broadcasting on hypertrees} \label{sec:prelim:boht}
\begin{definition}[Broadcasting on hypertrees] \label{defn:boht}
  Let $r\ge 2$ be an integer.
  Let $\cX$ be a finite set with $|\cX|\ge 2$.
  Let $\pi$ be a distribution on $\cX$ with full support.
  Let $B: \cX \to \cX^{r-1}$ be a channel satisfying
  \begin{align}
    B(y_1,\ldots,y_{r-1} | x) = B(y_{\tau(1)},\ldots,y_{\tau(r-1)} | x), \qquad \forall x,y_1,\ldots,y_{r-1}\in \cX, \tau\in S_{r-1}
  \end{align}
  where $S_{r-1}$ is the symmetric group on $r-1$ elements.
  Let $\wt B: \cX \to \cX$ be the channel
  \begin{align} \label{eqn:defn:boht:wtB}
    \wt B(y|x) = \sum_{y_2,\ldots,y_{r-1}\in \cX} B(y,y_2,\ldots,y_{r-1} | x), \qquad \forall x,y\in \cX.
  \end{align}
  We further assume that $\pi \wt B = \pi$.
  Let $T$ be a (possibly infinite) $r$-uniform linear hypertree rooted at $\rho$.

  The broadcasting on hypertrees (BOHT) model, denoted $\BOHT(T,r,\cX,\pi,B)$, generates a labeling $\sigma \in \cX^{V(T)}$ according to the following process.
  \begin{enumerate}[label=(\arabic*)]
    \item Generate $\sigma_\rho \sim \pi$.
    \item Suppose we have generated $\sigma_u$. Independently for every downward hyperedge $(u,v_1,\ldots,v_{r-1})$, generate $(\sigma_{v_1},\ldots,\sigma_{v_{r-1}}) \sim B(\cdot | \sigma_u)$.
  \end{enumerate}
  The output of the model is $(\sigma,T)$.

  Let $D$ be a distribution supported on non-negative integers.
  If $T$ is an $r$-uniform Galton-Watson linear hypertree with offspring distribution $D$ (i.e., the number of downward hyperedges of a vertex is sampled from $D$), we denote the model as $\BOHT(D,r,\cX,\pi,B)$.
\end{definition}

\begin{definition}[Binary symmetric BOHT models] \label{defn:bms-boht}
  A model $\BOHT(T,r,\cX,\pi,B)$ or $\BOHT(D,r,\cX,\pi,B)$ is called binary symmetric if $\cX=\{\pm\}$, $\pi = \Unif(\cX)$, and $B: \{\pm\} \to \{\pm\}^{r-1}$ (together with the global sign flip $\{\pm\}^{r-1} \to \{\pm\}^{r-1}$) is a BMS channel.
  In this case, the transition probability $B(y_1,\ldots,y_{r-1} | x)$ depends only on $\#\{i\in [r-1]: y_i=x\}$.
  Therefore the channel $B$ can be represented by a sequence $\bfb = (b_0,\ldots,b_{r-1})$ where $b_k = B\left( +^k -^{r-1-k} | +\right)$ ($0\le k\le r-1$).
  We call $(b_0,\ldots,b_{r-1})$ the signature of the model.
  Any sequence $\bfb=(b_0,\ldots,b_{r-1})$ satisfying $b_k\ge 0$ ($0\le k\le r-1$) and
  \begin{align}
    \sum_{0\le k\le r-1} \binom {r-1}k b_k =1
  \end{align}
  is the signature of a binary symmetric BOHT model, with the broadcasting channel $B=B_{\bfb}: \{\pm\} \to \{\pm\}^{r-1}$ given by
  \begin{align} \label{eqn:defn:bms-boht:Bb}
    B_{\bfb}(y_1,\ldots,y_{r-1} | x) = b_{\# \{i\in [r-1]: y_i=x\}}.
  \end{align}
  We denote a binary symmetric BOHT model with signature $\bfb$ as $\BOHT(T,r,\bfb)$ or $\BOHT(D,r,\bfb)$.
\end{definition}

\begin{definition}[Special BOHT model] \label{defn:special-boht}
  For $\lambda\in \left[-\frac 1{2^{r-1}-1}, 1\right]$, the signature
  \begin{align}
    \bfb = \left(\frac{1-\lambda}{2^{r-1}},\ldots,\frac{1-\lambda}{2^{r-1}},\lambda + \frac{1-\lambda}{2^{r-1}}\right)
  \end{align}
  gives rise to a binary symmetric BOHT model, which we call the special BOHT model.
  The broadcasting channel $B=B_{r,\lambda}: \{\pm\} \to \{\pm\}^{r-1}$ is given by
  \begin{align}
    B_{r,\lambda}(y_1,\ldots,y_{r-1} | x) = \left\{
      \begin{array}{ll}
        \lambda + \frac{1-\lambda}{2^{r-1}} & \text{if}~y_1=\cdots=y_{r-1}=x,\\
        \frac{1-\lambda}{2^{r-1}} & \text{o.w.}
      \end{array}
    \right.
  \end{align}
  We denote the special BOHT model as $\BOHT(T,r,\lambda)$ or $\BOHT(D,r,\lambda)$.
\end{definition}

\begin{definition}[Reconstruction problem for BOHT] \label{defn:boht-recon}
  Consider the model $\BOHT(T,r,\cX,\pi,B)$ or $\BOHT(D,r,\cX,\pi,B)$.
  For $k\ge 0$, define $L_k$ as the set of vertices at distance $k$ to $\rho$, and $T_k$ as the subgraph induced by $L_0\cup \cdots \cup L_k$.
  We say reconstruction is possible if any of the following equivalent conditions hold.
  \begin{enumerate}[label=(\alph*)]
    \item For some $x,y\in \cX$,
    \begin{align}
      \lim_{k\to \infty} \TV\left(P_{\sigma_{L_k} | \sigma_\rho=x}, P_{\sigma_{L_k} | \sigma_\rho=y} \mid T_k \right) > 0.
    \end{align}
    \item
    \begin{align}
      \lim_{k\to \infty} I(\sigma_\rho; T_k, \sigma_{L_k}) > 0.
    \end{align}
    \item
    \begin{align}
      \lim_{k\to \infty} I_{\chi^2}(\sigma_\rho; T_k, \sigma_{L_k}) > 0.
    \end{align}
  \end{enumerate}
  We say reconstruction is impossible if the conditions do not hold.
\end{definition}

The reconstruction problem on the Galton-Watson BOHT model $\BOHT(D,r,\cX,\pi,B)$ can be studied using the belief propagation operator.
\begin{definition}[Belief propagation operator] \label{defn:bp-operator}
  Consider the model $\BOHT(D,r,\cX,\pi,B)$.
  The belief propagation (BP) operator $\BP$ maps a channel $P$ with input alphabet $\cX$ to
  \begin{align}
    \BP(P) = \bE_{b\sim D} \left(P^{\times (r-1)} \circ B\right)^{\star b}.
  \end{align}
\end{definition}
The $\BP$ operator maps the space of channels with input alphabet $\cX$ to itself.
If the BOHT model is binary symmetric, $\BP$ maps BMS channels to BMS channels.

For $k\ge 0$, let $M_k$ denote the channel $\sigma_\rho \mapsto (T_k, \sigma_{L_k})$.
Then $M_k=\BP^k(\Id)$ where $\Id: \cX \to \cX$ is the identity channel.
Therefore we have the following equivalent conditions for reconstruction.
\begin{enumerate}[label=(\alph*)]
  \setItemnumber{4}
  \item
  \begin{align}
    \lim_{k\to \infty} I\left(\pi, \BP^k(\Id)\right) > 0.
  \end{align}
  \item
  \begin{align}
    \lim_{k\to \infty} I_{\chi^2}\left(\pi, \BP^k(\Id)\right) > 0.
  \end{align}
\end{enumerate}

\begin{definition}[Kesten-Stigum threshold] \label{defn:boht-ks}
  Consider the model $\BOHT(D,r,\cX,\pi,B)$.
  Recall the channel $\wt B: \cX \to \cX$ defined in \cref{eqn:defn:boht:wtB}.
  Let $d = \bE_{b\sim D} b$ and $\lambda = \lambda_2(\wt B)$ be the second largest eigenvalue (in absolute value) of $\wt B$.
  The Kesten-Stigum threshold is defined as $(r-1) d |\lambda|^2 = 1$.
\end{definition}
One can also define the Kesten-Stigum threshold for the model $\BOHT(T,r,\cX,\pi,B)$.
For an infinite rooted (hyper)tree $T$, the branching number $\br(T)$ generalizes the expected offspring $d$ for Galton-Watson (hyper)trees. The Kesten-Stigum threshold for $\BOHT(T,r,\cX,\pi,B)$ is $\br(T) |\lambda|^2 = 1$.
In this paper we focus on Galton-Watson (hyper)trees.

The Kesten-Stigum threshold is useful because it provides a simple criterion for reconstruction. A folklore result states that reconstruction is always possible above the Kesten-Stigum threshold (i.e., when $(r-1) d |\lambda|^2 > 1$).
In general, the reconstruction threshold can be strictly below the Kesten-Stigum threshold, as shown in e.g., \cite{mossel2001reconstruction} for the Potts model.

\begin{definition}[Robust reconstruction] \label{defn:boht-robust-recon}
  Consider the model $\BOHT(T,r,\cX,\pi,B)$ or $\BOHT(D,r,\cX,\pi,B)$.
  Let $W$ be a channel with input alphabet $\cX$.
  We say robust reconstruction is possible with respect to $W$ if
  \begin{align}
    \lim_{k\to \infty} I\left(\pi, \BP^k(W)\right) > 0.
  \end{align}
  We say robust reconstruction is impossible with respect to $W$ if the above condition does not hold.
\end{definition}

The next calculation gives the Kesten-Stigum threshold for binary symmetric BOHT models.
Consider $\BOHT(D,r,B_{\bfb})$. Note that the channel $\wt B_{\bfb}: \{\pm\}\to \{\pm\}$ is a BSC channel.
We have
\begin{align}
  \wt B_{\bfb}(+ | +)
  = \sum_{1\le k\le r-1} \binom{r-2}{k-1} B_{\bfb}\left(+^k -^{r-1-k} | +\right)
  = \sum_{1\le k\le r-1} b_k \binom{r-2}{k-1}.
\end{align}
So $\wt B_{\bfb} = \BSC_\delta$ for
\begin{align}
  \delta = 1-\sum_{1\le k\le r-1} b_k \binom{r-2}{k-1}.
\end{align}
Therefore
\begin{align}
  \lambda = 1-2\delta = 2\sum_{1\le k\le r-1} b_k \binom{r-2}{k-1}-1.
\end{align}

The following lemma relates BOHT models with different parameters.
\begin{lemma}[Monotonicity] \label{lem:boht-monotone}
  Consider two binary symmetric BOHT models $\BOHT(\Pois(d),r,\bfb)$ and $\BOHT(\Pois(d'),r,\bfb')$ satisfying $d\ge d'$ and
  \begin{align}
    \bfb' = (1-c) \bfb + c \left(\frac 1{2^{r-1}}, \ldots, \frac 1{2^{r-1}}\right)
  \end{align}
  for some $c\in [0, 1]$.
  Then for any BMS channel $P$ and any $k\ge 0$ we have
  \begin{align}
    \BP_{r,d,\bfb}^k(P) \ge_{\deg} \BP_{r,d',\bfb'}^k(P).
  \end{align}

  In particular, for special BOHT models $\BOHT(\Pois(d),r,\lambda)$ and $\BOHT(\Pois(d'),r,\lambda')$,
  if $d\ge d'$, $\sgn(\lambda)=\sgn(\lambda')$ and $|\lambda|\ge|\lambda'|$, then for any BMS channel $P$ and any $k\ge 0$ we have
  \begin{align}
    \BP_{r,d,\lambda}^k(P) \ge_{\deg} \BP_{r,d',\lambda'}^k(P).
  \end{align}
\end{lemma}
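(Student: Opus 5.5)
The proof is an induction on $k$. It uses the facts that degradation respects pre-composition, $\star$-product, product and mixture, together with two one-step comparisons: one for the broadcasting channel and one for the degree distribution.

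\textbf{Step 1 (broadcasting channel).} We show $B_{\bfb'} \le_{\deg} B_{\bfb}$ as channels $\{\pm\}\to\{\pm\}^{r-1}$. Since the signature map $\bfb\mapsto B_{\bfb}$ is affine, we have $B_{\bfb'} = (1-c)B_{\bfb} + c\,U$, where $U(\cdot|x)=\Unif(\{\pm\}^{r-1})$ for both inputs. This channel is obtained from $B_{\bfb}$ by post-composing with the channel $R$ that keeps the output with probability $1-c$ and otherwise replaces it by a fresh uniform element of $\{\pm\}^{r-1}$. Therefore $B_{\bfb'}=R\circ B_{\bfb}$.

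\textbf{Step 2 (one BP step).} Let $P'\le_{\deg} P$ be BMS channels. We show $\BP_{r,d',\bfb'}(P')\le_{\deg}\BP_{r,d,\bfb}(P)$.
\begin{itemize}
\item By the product property, $P'^{\times(r-1)}\le_{\deg}P^{\times(r-1)}$.
\item Composing with $B_{\bfb'}$ on the input side gives $P'^{\times(r-1)}\circ B_{\bfb'}\le_{\deg}P^{\times(r-1)}\circ B_{\bfb'}$.
\item Step 1 gives $B_{\bfb'}=R\circ B_{\bfb}$. Hence $P^{\times(r-1)}\circ B_{\bfb'}=P^{\times(r-1)}\circ R\circ B_{\bfb}$. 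Writing $W=P^{\times(r-1)}$, we need $W\circ R\circ B_{\bfb}\le_{\deg}W\circ B_{\bfb}$. This is not immediate, because $R$ sits in the middle.
\end{itemize}
To handle this, note that $W\circ R = (1-c)W + c\,W\circ U_0$, where $U_0$ outputs a uniform $(r-1)$-tuple regardless of its input. So $W\circ R\circ B_{\bfb} = (1-c)\,W\circ B_{\bfb} + c\,(W\circ U_0\circ B_{\bfb})$. The second term does not depend on $x$, so it can be simulated from the output of $W\circ B_{\bfb}$: with probability $c$, discard that output and sample independently from the fixed distribution $W\circ U_0$. Thus we obtain $\le_{\deg}$ with an explicit post-processing.

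Applying the $\star$-power property gives $Q'^{\star b}\le_{\deg}Q^{\star b}$, where $Q'=P'^{\times(r-1)}\circ B_{\bfb'}$ and $Q=P^{\times(r-1)}\circ B_{\bfb}$.

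For the degree, write $\Pois(d')$ as a thinning of $\Pois(d)$: sample $b\sim\Pois(d)$ and keep each factor independently with probability $d'/d$. Then $\bE_{b'\sim\Pois(d')}Q'^{\star b'}$ is obtained from $\bE_{b\sim\Pois(d)}Q^{\star b}$ as follows. Degrade each of the $b$ factors $Q$ to $Q'$, erase each independently with probability $1-d'/d$, and forget which ones were erased. Erasing a factor is a post-processing, and so is forgetting the indices, since the factors are exchangeable. The mixture property handles the random $b$.

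\textbf{Step 3 (induction).} Apply Step 2 with $P'=\BP^{k}_{r,d',\bfb'}(P)$ and $P=\BP^{k}_{r,d,\bfb}(P)$, starting from the equal channels at $k=0$.

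\textbf{Special-case corollary.} The special signatures satisfy $\bfb_{\lambda'}=(1-c)\bfb_\lambda+c\cdot\mathrm{unif}$ with $c=1-\lambda'/\lambda\in[0,1]$. This holds exactly when $\lambda,\lambda'$ have the same sign and $|\lambda'|\le|\lambda|$, because the special signature is affine in $\lambda$ and equals the uniform signature at $\lambda=0$.

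The main obstacle is the middle-composition issue in Step 2. It is resolved by the explicit mixture decomposition above rather than by a general preorder property.
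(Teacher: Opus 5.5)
Your proof is correct and follows the paper's argument. Your post-processing (keep the output of $P^{\times(r-1)}\circ B_{\bfb}$ with probability $1-c$, otherwise resample from the input-independent law $W\circ U_0$) is exactly the paper's channel $R$ that resamples from $(P\circ\Unif(\{\pm\}))^{\times(r-1)}$, and your Poisson thinning is the monotone coupling the paper uses to reduce to $d=d'$. The only difference is cosmetic: you fold the monotonicity of $\BP$ in its argument and the one-step model comparison into a single induction step, whereas the paper first reduces to $k=1$ using that degradation respects the BP operator.
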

\begin{proof}
  Because the degradation preorder respects the BP operator, it suffices to prove the case $k=1$.
  Because there is a monotone coupling between $\Pois(d)$ and $\Pois(d')$, it suffices to prove the case $d=d'$.
  Now
  \begin{align}
    \BP_{r,d,\bfb}(P) = \bE_{b\sim \Pois(d)} \left(P^{\times (r-1)} \circ B_{r,\bfb}\right)^{\star b},
    \qquad
    \BP_{r,d,\bfb'}(P) = \bE_{b\sim \Pois(d)} \left(P^{\times (r-1)} \circ B_{r,\bfb'}\right)^{\star b}.
  \end{align}
  So it suffices to prove that
  \begin{align} \label{eqn:lem:boht-monotone:step}
    P^{\times (r-1)} \circ B_{r,\bfb} \ge_{\deg} P^{\times (r-1)} \circ B_{r,\bfb'}.
  \end{align}
  Let $\cY$ be the output alphabet of $P$.
  Then the output alphabets of $P^{\times (r-1)} \circ B_{r,\bfb}$ and $P^{\times (r-1)} \circ B_{r,\bfb'}$ are both $\cY^{r-1}$.
  Let $R: \cY^{r-1} \to \cY^{r-1}$ be the channel that, on input $y^{r-1}\in \cY^{r-1}$, outputs $y^{r-1}$ with probability $1-c$ and outputs a sample from $(P\circ \Unif(\{\pm\}))^{\times (r-1)}$ with probability $c$.
  Then we have
  \begin{align}
    R\circ P^{\times (r-1)} \circ B_{r,\bfb} = P^{\times (r-1)} \circ B_{r,\bfb'},
  \end{align}
  which implies \cref{eqn:lem:boht-monotone:step}.
\end{proof}

\section{Robust non-reconstruction} \label{sec:robust}
In this section, we use the less-noisy preorder to provide an upper bound for $\BP(P)$ (\cref{prop:robust}).
A Taylor expansion then gives criteria for robust non-reconstruction (\cref{coro:robust}). While our main application (\cref{thm:boht}) is the special BOHT model, we state the method for binary symmetric BOHT models.

\begin{proposition} \label{prop:robust}
  Consider a binary symmetric BOHT model $\BOHT(D,r,\bfb)$.
  Let $\BP$ be the associated belief propagation operator.
  For any BMS channel $P$, we have
  \begin{align} \label{eqn:prop:robust:final}
    \BP(P) \le_{\ln} \BEC_{1-f\left(C_{\chi^2}(P)\right)},
  \end{align}
  where $f: [0, 1]\to [0, 1]$ is defined as
  \begin{align}
    \label{eqn:prop:robust:f} f(x) &= 1-\bE_{b\sim D} (1-g(x))^b,\\
    \label{eqn:prop:robust:g} g(x) &= \sum_{0\le i\le r-1} \binom{r-1}i x^i (1-x)^{r-1-i} c_i,\\
    \label{eqn:prop:robust:c} c_i &= \sum_{0\le j\le i} p_{i,j} \left(\frac{p_{i,j}-p_{i,i-j}}{p_{i,j}+p_{i,i-j}}\right)^2, \qquad 0\le i\le r-1,\\
    \label{eqn:prop:robust:p} p_{i,j} &= \sum_{0\le k\le r-1} b_k \binom ij \binom {r-1-i}{k-j} , \qquad 0\le j\le i\le r-1.
  \end{align}
\end{proposition}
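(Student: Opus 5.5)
Set $x = C_{\chi^2}(P)$. The plan is to push the less-noisy preorder through the three layers of the BP operator (product, composition with $B$, then $\star$-power and mixture), and to compute the resulting BEC-type channel explicitly at each layer.

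\textbf{Step 1: replace $P$ by an erasure channel and compose with $B$.} By \cref{lem:bms-ln-bec}, $P \le_{\ln} \BEC_{1-x}$. Since the less-noisy preorder respects product and pre-composition, this gives
\begin{align}
  P^{\times(r-1)}\circ B_{\bfb} \le_{\ln} \BEC_{1-x}^{\times(r-1)}\circ B_{\bfb} =: W.
\end{align}
The channel $W$ has a simple description. Each of the $r-1$ children is independently revealed with probability $x$ and erased otherwise, so $W$ is a mixture over the number $i\sim\mathrm{Bin}(r-1,x)$ of revealed children. By the permutation symmetry of $B_{\bfb}$, the component with $i$ revealed children is equivalent to the channel $x\mapsto(y_1,\dots,y_i)$ obtained by marginalizing $B_{\bfb}$ onto $i$ coordinates.

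\textbf{Step 2: compute the $\chi^2$-capacity of each component.} Within the $i$-th component, the output depends on the input only through the number $j$ of revealed children that agree with $+$. Summing over the unrevealed coordinates gives
\begin{align}
  P(\text{a fixed pattern with } j \text{ agreements}\mid +) = \sum_k b_k\binom{r-1-i}{k-j}.
\end{align}
Grouping the $\binom ij$ patterns shows that $j\sim p_{i,j}$ under input $+$ and $j\sim p_{i,i-j}$ under input $-$. This is a BMS channel: its output $j$ has posterior $\theta = \frac{p_{i,j}-p_{i,i-j}}{p_{i,j}+p_{i,i-j}}$ and probability $\frac{p_{i,j}+p_{i,i-j}}2$. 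Therefore
\begin{align}
  C_{\chi^2}=\sum_j \tfrac{p_{i,j}+p_{i,i-j}}{2}\theta_j^2.
\end{align}
By the symmetry $j\leftrightarrow i-j$ this equals $\sum_j p_{i,j}\theta_j^2 = c_i$. Since $\chi^2$-capacity is linear under mixture, $C_{\chi^2}(W)=g(x)$, and \cref{lem:bms-ln-bec} then gives $W\le_{\ln}\BEC_{1-g(x)}$.

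\textbf{Step 3: $\star$-power and mixture over $D$.} By the $\star$-product property, $W^{\star b}\le_{\ln}\BEC_{1-g(x)}^{\star b}=\BEC_{(1-g(x))^b}$. The mixture property then gives
\begin{align}
  \BP(P)\le_{\ln}\bE_{b\sim D}\BEC_{(1-g(x))^b}.
\end{align}
A mixture of BECs is a BEC with the averaged erasure probability, namely $1-f(x)$. Chaining the inequalities by transitivity of $\le_{\ln}$ yields \cref{eqn:prop:robust:final}.

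\textbf{Main obstacle.} The main obstacle is Step 2: showing rigorously that $\BEC_{1-x}^{\times(r-1)}\circ B_{\bfb}$ is equivalent to the stated mixture, and doing the counting that identifies the sufficient statistic $j$ with distribution $p_{i,j}$ versus $p_{i,i-j}$. Erased coordinates carry no information, because $\BEC$ outputs $*$ with the same probability regardless of the input. The formula for $c_i$ then follows from the BSC-mixture representation of a BMS channel.
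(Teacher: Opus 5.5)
Your proposal is correct and follows essentially the same route as the paper. You dominate $P$ by $\BEC_{1-x}$, compute $C_{\chi^2}\bigl(\BEC_{1-x}^{\times(r-1)}\circ B_{\bfb}\bigr)=g(x)$ as a binomial mixture of the $c_i$ via the sufficient statistic $j$, and push the bound through $\star$-powers and the mixture over $D$. The only cosmetic difference is that you identify $\bE_{b\sim D}\BEC_{(1-g(x))^b}$ directly with $\BEC_{1-f(x)}$, whereas the paper first derives $C_{\chi^2}(\BP(P))\le f(x)$ and then reapplies \cref{lem:bms-ln-bec}.
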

\begin{proof}
  Let $x = C_{\chi^2}(P)$.
  Recall that
  \begin{align}
    \BP(P) = \bE_{b\sim D} \left( P^{\times (r-1)} \circ B_{\bfb}\right)^{\star b}
  \end{align}
  where $B_{\bfb}$ is defined in \cref{eqn:defn:bms-boht:Bb}.

  We first prove that
  \begin{align} \label{eqn:proof:prop:robust:step0}
    P^{\times (r-1)} \circ B_{\bfb} \le_{\ln} \BEC_{1-g(x)}.
  \end{align}
  By \cref{lem:bms-ln-bec}, it suffices to prove that
  \begin{align} \label{eqn:proof:prop:robust:step1}
    C_{\chi^2}\left(\BEC_{1-x}^{\times (r-1)} \circ B_{\bfb}\right) \le g(x).
  \end{align}
  We have
  \begin{align} \label{eqn:proof:prop:robust:step2}
    &~C_{\chi^2}\left(\BEC_{1-x}^{\times (r-1)} \circ B_{\bfb}\right)\\
    \nonumber =& C_{\chi^2}\left(\bE_{z_1,\ldots,z_{r-1} \sim \Ber(x)} \left(\left(\prod_{i\in [r-1]} \BEC_{1-z_i} \right)\circ B_{\bfb}\right)\right) \\
    \nonumber =& \bE_{z_1,\ldots,z_{r-1} \sim \Ber(x)} C_{\chi^2}\left(\left(\prod_{i\in [r-1]} \BEC_{1-z_i} \right)\circ B_{\bfb}\right) \\
    \nonumber =& \sum_{0\le i\le r-1} \binom{r-1}i x^i (1-x)^{r-1-i} C_{\chi^2}\left(\left(\Id^{\times i} \times 0^{\times (r-1-i)}\right)\circ B_{\bfb}\right)
  \end{align}
  where $\Id=\BEC_0$ is (equivalent to) the identity channel and $0=\BEC_1$ is (equivalent to) the trivial channel.
  Comparing \cref{eqn:prop:robust:g,eqn:proof:prop:robust:step2}, to prove \cref{eqn:proof:prop:robust:step1}, we only need to prove that
  \begin{align} \label{eqn:proof:prop:robust:step3}
    C_{\chi^2}\left(\left(\Id^{\times i} \times 0^{\times (r-1-i)}\right)\circ B_{\bfb}\right) = c_i.
  \end{align}
  Let $B_i$ denote the channel $\left(\Id^{\times i} \times 0^{\times (r-1-i)}\right)\circ B_{\bfb}$.
  Let $R: \{\pm,*\}^{r-1} \to \{0,\ldots,r-1\}$ be the channel that maps $(y_1,\ldots,y_{r-1})$ to $\#\{j\in [r-1]: y_j=+\}$.
  Then $R\circ B_i$ is equivalent to $B_i$.
  For $0\le j\le i$, we have
  \begin{align}
    (R\circ B_i)(j | +) = \sum_{0\le k\le r-1} b_k \binom ij \binom{r-1-i}{k-j} = p_{i,j}.
  \end{align}
  Note that $R\circ B_i$ together with the map $\sigma(j)=i-j$ is a BMS channel.
  Therefore
  \begin{align}
    C_{\chi^2}(B_i) = C_{\chi^2}(R\circ B_i) = \sum_{0\le j\le i} p_{i,j} \left(\frac{p_{i,j}-p_{i,i-j}}{p_{i,j}+p_{i,i-j}}\right)^2 = c_i.
  \end{align}
  This proves \cref{eqn:proof:prop:robust:step3} and thus \cref{eqn:proof:prop:robust:step0}.

  By \cref{eqn:proof:prop:robust:step0},
  \begin{align}
    C_{\chi^2}(\BP(P)) &= C_{\chi^2}\left(\bE_{b\sim D} \left( P^{\times (r-1)} \circ B_{\bfb}\right)^{\star b}\right) \\
    \nonumber &\le C_{\chi^2}\left(\bE_{b\sim D} \BEC_{1-g(x)}^{\star b}\right) \\
    \nonumber &= \bE_{b\sim D} \left[1-(1-g(x))^b\right] \\
    \nonumber &= f(x).
  \end{align}
  This implies \cref{eqn:prop:robust:final} by \cref{lem:bms-ln-bec}.
\end{proof}
We remark that $c_0$ is always zero. Therefore in \cref{eqn:prop:robust:g}, we can take the summation from $1$ to $r-1$.
\begin{corollary} \label{coro:robust}
  Work in the setting of \cref{prop:robust}.
  \begin{enumerate}[label=(\roman*)]
    \item If $(r-1) d \lambda^2 < 1$, then there exists $\epsilon>0$ such that robust reconstruction is impossible with respect to all BMS channels $W$ with $C_{\chi^2}(W) \le \epsilon$.
    \item If $(r-1) d \lambda^2 = 1$ and
    \begin{align}
      \binom{r-1}2 d c_2 < r-2 + (r-1)^2 \left(\bE_{b\sim D} \binom b2\right) \lambda^4,
    \end{align}
    then there exists $\epsilon>0$ such that robust reconstruction is impossible with respect to all BMS channels $W$ with $C_{\chi^2}(W) \le \epsilon$.
  \end{enumerate}
\end{corollary}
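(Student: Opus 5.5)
The plan is to turn \cref{prop:robust} into a one-dimensional recursion for $x_k := C_{\chi^2}(\BP^k(W))$. Applying \cref{prop:robust} to $P=\BP^k(W)$ and taking $C_{\chi^2}$ of both sides (less-noisy order is monotone for $C_{\chi^2}$, and $C_{\chi^2}(\BEC_{1-y})=y$) gives
\begin{align}
  x_{k+1} \le f(x_k).
\end{align}
It then suffices to find $\epsilon>0$ with $f(0)=0$ and $f(x)<x$ for all $x\in(0,\epsilon]$.

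\textbf{From $f(x)<x$ to non-reconstruction.} Suppose $x_0\le\epsilon$. Then $(x_k)$ is non-increasing and stays in $[0,\epsilon]$, so it converges to some $L\in[0,\epsilon]$. Since $f$ is continuous, passing to the limit in $x_{k+1}\le f(x_k)$ gives $L\le f(L)$, which forces $L=0$. This argument does not need $f$ to be monotone.

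Finally $C_{\chi^2}(\BP^k(W))\to 0$ implies $I(\pi,\BP^k(W))\to0$. Indeed, by \cref{lem:bms-ln-bec} we have $\BP^k(W)\le_{\ln}\BEC_{1-x_k}$, whose mutual information is $x_k\log 2$. Hence robust reconstruction is impossible.

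\textbf{Taylor expansion of $f$ at $0$.} First, $g(0)=c_0=0$, so $f(0)=0$. For the linear coefficient of $g$ I need $c_1$. The channel $(\Id\times 0^{\times(r-2)})\circ B_{\bfb}$ is exactly $\wt B_{\bfb}=\BSC_\delta$ with $1-2\delta=\lambda$, so $c_1=\lambda^2$. Expanding $g$ then gives
\begin{align}
  g(x) = (r-1)\lambda^2 x + \left[\tbinom{r-1}{2}c_2-(r-1)(r-2)\lambda^2\right]x^2+O(x^3).
\end{align}
For the outer function I use the elementary inequality, valid for $g\in[0,1]$ and integer $b\ge0$,
\begin{align}
  1-(1-g)^b \le bg-\tbinom b2 g^2+\tbinom b3 g^3.
\end{align}
Taking $\bE_{b\sim D}$ (the third moment is finite; for $\Pois(d)$ this is clear) gives
\begin{align}
  f(x)\le d(r-1)\lambda^2x+\left[d\tbinom{r-1}2c_2-d(r-1)(r-2)\lambda^2-\bE_{b\sim D}\tbinom b2\,(r-1)^2\lambda^4\right]x^2+Kx^3
\end{align}
for a constant $K$ and all small $x$.

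\textbf{Case (i).} Here only the weaker bound $1-(1-g)^b\le bg$ is needed, which gives $f(x)\le d\,g(x)\le d(r-1)\lambda^2x+K'x^2$. Since the slope is less than $1$, we get $f(x)<x$ for small $x>0$.

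\textbf{Case (ii).} Now $d(r-1)\lambda^2=1$, so the linear term is exactly $x$ and $d(r-1)(r-2)\lambda^2=r-2$. The $x^2$ coefficient becomes
\begin{align}
  \tbinom{r-1}2 d c_2-(r-2)-(r-1)^2\lambda^4\,\bE_{b\sim D}\tbinom b2,
\end{align}
which is strictly negative by hypothesis. Hence $f(x)\le x-\eta x^2+Kx^3<x$ for some $\eta>0$ and all sufficiently small $x>0$.

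\textbf{Main obstacle.} The delicate step is case (ii), where the first-order term is exactly critical. There the argument needs an upper bound on $f$ that is correct to second order, with a controlled remainder, rather than only an asymptotic expansion. That is why I use the truncated inclusion–exclusion upper bound above, which needs a finite third moment of $D$. Everything else is bookkeeping.
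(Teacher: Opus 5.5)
Your proof is correct and follows the paper's route: a second-order expansion of $f$ at $0$ via \cref{prop:robust}, the identity $c_1=\lambda^2$, and a check of the sign of the $x^2$ coefficient at criticality. The differences are minor. Your Bonferroni bound makes rigorous what the paper writes as $O_D(x^3)$. Your iteration argument spells out the step from $f(x)<x$ to non-reconstruction, which the paper leaves implicit here. Reading $c_1$ off $\wt B_{\bfb}=\BSC_\delta$ replaces the paper's direct computation $p_{1,1}-p_{1,0}=\lambda$.
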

\begin{proof}
  By \cref{eqn:prop:robust:g},
  \begin{align}
    g(x) &= (r-1) x(1-(r-2)x) c_1 + \binom{r-1}2 x^2 c_2 + O_{\bfb}(x^3) \\
    \nonumber &= (r-1) c_1 x + \binom{r-1}2 (c_2 - 2 c_1) x^2 + O_{\bfb}(x^3).
  \end{align}
  Then by \cref{eqn:prop:robust:f},
  \begin{align}
    f(x) &= 1-\bE_{b\sim D} \left[ 1-b g(x) + \binom b2 g(x)^2 \right] + O_D(x^3), \\
    \nonumber &= d g(x) - \left(\bE_{b\sim D} \binom b2\right) g(x)^2 + O_D(x^3) \\
    \nonumber &= (r-1) d c_1 x + \left( \binom{r-1}2 d (c_2-2c_1) - (r-1)^2 \left(\bE_{b\sim D} \binom b2\right) c_1^2 \right) x^2 + O_{D,\bfb}(x^3).
  \end{align}
  Therefore, if $(r-1) d c_1 < 1$ or $(r-1) d c_1 = 1$ and
  \begin{align}
    \binom{r-1}2 d (c_2-2c_1) - (r-1)^2 \left(\bE_{b\sim D} \binom b2\right) c_1^2 < 0,
  \end{align}
  then there exists $\epsilon>0$ such that $f(x) < x$ for all $0\le x\le \epsilon$.

  It remains to prove that $c_1 = \lambda^2$.
  We have
  \begin{align}
    p_{1,1}-p_{1,0} &= \sum_{0\le k\le r-1} b_k \binom{r-2}{k-1} - \sum_{0\le k\le r-1} b_k \binom{r-2}{k} \\
    \nonumber &= \sum_{0\le k\le r-1} b_k \left(2 \binom{r-2}{k-1} - \binom{r-1}k\right) \\
    \nonumber &= 2 \sum_{0\le k\le r-1} b_k \binom{r-2}{k-1} - 1\\
    \nonumber &= \lambda
  \end{align}
  and $p_{1,1}+p_{1,0}=1$. So
  \begin{align}
    c_1 = (p_{1,1}-p_{1,0})^2 = \lambda^2.
  \end{align}
\end{proof}

\section{Rigorous population dynamics} \label{sec:pop}
In this section we present our rigorous population dynamics algorithm (\cref{alg:pop,alg:bms-builder}), which produces an upper bound for $\BP^k(\Id)$. Our algorithm uses finite-precision arithmetic and can be implemented using integers of bounded size. In \cref{sec:special-boht}, we use a rigorous implementation of the algorithm to prove non-reconstruction results for the special BOHT\@.
While our main application (\cref{thm:boht}) is the special BOHT model, we state the method for binary symmetric BOHT models.

\begin{algorithm}[ht]
  \caption{Rigorous population dynamics}
  \label{alg:pop}
  \begin{algorithmic}[1]
    \State \textbf{global parameters:} Support size $s\in \bZ_{\ge 1}$. Precision parameter $w \in \bZ_{\ge 1}$.
    \Procedure{PopulationDynamics}{$D,r,\bfb,k$}
      \State \textbf{input:} Binary symmetric BOHT model $\BOHT(D,r,\bfb)$. Integer $k\ge 0$.
      \State \textbf{output:} BMS channel $P$ with $\#\supp(\mu_P) \le s+1$ satisfying $\BP^k(\Id) \le_{\ln} P$.
      \State $M_0 \gets \Id$
      \For{$i=1\to k$}
        \State $M_i \gets \txtsc{UpperBP}(D,r,\bfb,M_{i-1})$
      \EndFor
      \State \Return $M_k$.
    \EndProcedure

    \Procedure{UpperBP}{$D,r,\bfb,P$}
      \State \textbf{input:} Binary symmetric BOHT model $\BOHT(D,r,\bfb)$. Finite-support BMS channel $P$.
      \State \textbf{output:} BMS channel $Q$ with $\#\supp(\mu_Q) \le s+1$ satisfying $\BP(P) \le_{\ln} Q$.
      \State $P \gets \txtsc{UpperQuantize}(P)$
      \State \txtsc{BMSBuilder} \txtsf{builder}
      \Comment{\cref{alg:bms-builder}}
      \State $\txtsf{builder}.\txtsc{Initialize}()$
      \State $b^* \gets \min\{b\in \bZ_{\ge 0}: D\left( \bZ_{> b} \right) < w^{-1}\}$
      \State $P_0 \gets 0$, $P_1 \gets \txtsc{UpperQuantize}\left(P^{\times (r-1)} \circ B\right)$
      \For{$b=0\to b^*$}
        \If{$b\ge 2$}
          $P_b \gets \txtsc{UpperQuantize}\left(P_{\lfloor \frac b2\rfloor} \star P_{\lceil \frac b2 \rceil}\right)$
        \EndIf
        \State $\txtsf{builder}.\txtsc{AddBMS}(P_b, \lfloor D(b) w\rfloor w^{-1})$
      \EndFor
      \State \Return $\txtsf{builder}.\txtsc{Output}()$
    \EndProcedure

    \Procedure{UpperQuantize}{$P$}
      \State \textbf{input:} Finite-support BMS channel $P$
      \State \textbf{output:} BMS channel $Q$ with $\#\supp(\mu_Q) \le s+1$ satisfying $P \le_{\ln} Q$.
      \State \txtsc{BMSBuilder} \txtsf{builder}
      \Comment{\cref{alg:bms-builder}}
      \State $\txtsf{builder}.\txtsc{Initialize}()$
      \State $\txtsf{builder}.\txtsc{AddBMS}(P, 1)$
      \State \Return $\txtsf{builder}.\txtsc{Output}()$
    \EndProcedure
  \end{algorithmic}
\end{algorithm}

\begin{algorithm}[ht]
  \caption{BMS channel builder}
  \label{alg:bms-builder}
  \begin{algorithmic}[1]
    \State \textbf{global parameters:} Support size $s\in \bZ_{\ge 1}$. Precision parameter $w \in \bZ_{\ge 1}$.
    \State \textbf{data structure} \txtsc{BMSBuilder}
    \State \textbf{member:}
    $0 = \theta_0 < \cdots < \theta_s = 1$.
    $0\le p_0,\ldots,p_s\le 1$.
    \Procedure{Initialize}{$ $}
      \State $\theta_i \gets \frac is$, $\forall 0\le i\le s$
      \State $p_i \gets 0$, $\forall 0\le i\le s$
    \EndProcedure
    \Procedure{AddBSC}{$\theta,p$}
      \State \textbf{input:} $\theta,p\in [0, 1]$
      \State $i\gets \max\{0\le i\le s: \theta_i \le \theta\}$
      \If{$i=s$}
        \State $p_s \gets p_s + \lfloor p w \rfloor w^{-1}$
        \State \Return
      \EndIf
      \State $t \gets \frac{\theta^2-\theta_i^2}{\theta_{i+1}^2-\theta_i^2}$
      \State $p_i \gets p_i + \lfloor (1-t)p w \rfloor w^{-1}$
      \State $p_{i+1} \gets p_{i+1} + \lfloor t p w \rfloor w^{-1}$
  \EndProcedure
    \Procedure{AddBMS}{$P,c$}
      \State \textbf{input:} Finite-support BMS channel $P$. $c\in [0, 1]$.
      \For{$\Delta_P\in \supp(\mu_P)$}
        \State \txtsc{AddBSC}($1-2\Delta_P, c \cdot \mu_P(\Delta_P)$)
      \EndFor
    \EndProcedure
    \Procedure{Output}{$ $}
      \State $p_s \gets p_s + 1-\sum_{0\le i\le s} p_i$
      \Comment{Normalize}
      \State \Return $\sum_{0\le i\le s} p_i \BSC_{\frac{1-\theta_i}{2}}$
    \EndProcedure
  \end{algorithmic}
\end{algorithm}

\subsection{Overview} \label{sec:pop:overview}
We begin with a brief overview of the rigorous population dynamics algorithm.
The goal of the algorithm is to compute a BMS channel $P$ satisfying $\BP^k(\Id) \le_{\ln} P$.
There are two obvious choices of $P$, neither satisfactory here.
One obvious choice is $\BP^k(\Id)$. This would be the perfect choice if we could represent it and perform the relevant computations exactly on a computer. Unfortunately, real computers have finite computational resources, while $\BP^k(\Id)$ a priori could have countable support. Therefore, we need to find a finite-support upper bound of $\BP^k(\Id)$.
The other obvious choice is $P=\Id$, which is the largest channel in the less-noisy preorder, and has support size $1$. However, it is too far away from the actual channel $\BP^k(\Id)$.
We would like the output channel $P$ to be close enough to the actual channel $\BP^k(\Id)$ that it falls in the regime where robust reconstruction is impossible.

We now introduce the algorithm.
First suppose that infinite-precision arithmetic is available.
Let $s\ge 1$ be an integer and $0=\theta_0 < \theta_1 < \cdots < \theta_s=1$ be a sequence of real numbers.

For any finite-support BMS channel $P$, we can produce a BMS channel $Q$ such that $P\le_{\ln} Q$ and the $\theta$-component of $Q$ is contained in the set $\{\theta_i : 0\le i\le s\}$.
If $P=\BSC_\delta$, then we find an index $0\le i\le s$ such that $\theta = 1-2\delta \in [\theta_i, \theta_{i+1}]$, and output the channel
\begin{align} \label{eqn:pop:bsc-ub}
  Q = \frac{\theta_{i+1}^2 - \theta^2}{\theta_{i+1}^2 - \theta_i^2} \cdot \BSC_{\frac{1-\theta_i}2} + \frac{\theta^2 - \theta_i^2}{\theta_{i+1}^2 - \theta_i^2} \cdot \BSC_{\frac{1-\theta_{i+1}}2}.
\end{align}
Note that
\begin{align}
  \frac{\theta_{i+1}^2 - \theta^2}{\theta_{i+1}^2 - \theta_i^2} \cdot \theta_i^2 + \frac{\theta^2 - \theta_i^2}{\theta_{i+1}^2 - \theta_i^2} \cdot \theta_{i+1}^2 = \theta^2.
\end{align}
So by \cref{lem:bms-ln-coupling}, this choice of $Q$ satisfies $P\le_{\ln} Q$.
If $P$ is a finite mixture of BSC channels $\{\BSC_{\delta_j} : 1\le j\le s'\}$, then we can apply the above procedure for each $\BSC_{\delta_j}$ and take the mixture of the outputs. In this way, the final output channel has $\theta$-component contained in $\{\theta_i : 0\le i\le s\}$, and is less noisy than $P$ because $\le_{\ln}$ respects mixtures.

The same idea handles BMS channels $P$ with countable support.
Suppose $P = \sum_{j\ge 1} p_j \BSC_{\delta_j}$. Then we can truncate $P$ using
\begin{align}
  P \le_{\deg} \left(\sum_{1\le j\le s'} p_j \BSC_{\delta_j}\right) + \left(1- \sum_{1\le j\le s'} p_j\right) \Id = P',
\end{align}
where $s'$ is some integer,
and then apply the above procedure to produce an upper bound channel for $P'$.

Recall that $\BP(P) = \bE_{b\sim D} \left( P^{\times (r-1)} \circ B \right)^{\star b}$.
Suppose that $P$ has finite support. Then we can compute a small-support BMS channel upper bound for $P^{\times (r-1)} \circ B$.
Let $P_0 = 0$ and $P_1$ be this upper bound.
Take a large $b^*$.
For every $2\le b\le b^*$, we let $P_b$ be a small-support BMS channel upper bound for $P_{\lfloor \frac b2\rfloor} \star P_{\lceil \frac b2 \rceil}$.
Then $P_b$ is also a small-support upper bound for $\left(P^{\times (r-1)} \circ B\right)^{\star b}$.
Finally,
\begin{align}
  Q = \left(\sum_{0\le b\le b^*} D(b) P_b\right) + D(\bZ_{>b^*}) \Id
\end{align}
is a small-support upper bound for $\BP(P)$.
Iterating the procedure $k$ times, we get a small-support upper bound for $\BP^k(\Id)$.

We now remove the infinite-precision arithmetic assumption.
This requires care: even if the input numbers have low complexity (i.e., can be represented using a small number of bits), intermediate values may require many bits to represent.
For example, $\frac{\theta_{i+1}^2-\theta^2}{\theta_{i+1}^2-\theta_i^2}$, which appeared in \cref{eqn:pop:bsc-ub}, a priori could have higher complexity than $\theta_i$, $\theta_{i+1}$, and $\theta$.
We cannot simply round the numbers because that may break the less-noisy preorder. Nevertheless, with careful handling, all numbers appearing in the computation have bounded complexity, and the less-noisy preorder is preserved.

The final algorithm is presented in \cref{alg:pop,alg:bms-builder}.

\subsection{Analysis} \label{sec:pop:analysis}
In this section we analyze the rigorous population dynamics algorithm. In particular, we prove that the output of \txtsc{PopulationDynamics} is always a small-support upper bound of $\BP^k(\Id)$ with small-complexity numbers.
We do not give guarantees for how close the upper bound is to the actual channel, but in \cref{sec:special-boht} we show that it is close enough for our purposes.

We use the following notion to measure the complexity of a number.
\begin{definition}[Numerical safety] \label{defn:pop-safety}
  Let $w\in \bZ_{\ge 1}$.
  A real number $x\in [0, 1]$ is called $w$-safe if $x u \in \bZ$ for some $u\in [w]$.

  Let $s,w\in \bZ_{\ge 1}$.
  A finite-support BMS channel $P$ is called $(s,w)$-safe if $\#\supp(\mu_P) \le s+1$, and for all $\Delta_P \in \supp(\mu_P)$, $1-2\Delta_P$ and $\mu_P(\Delta_P)$ are both $w$-safe.
\end{definition}
Note that arithmetic operations involving $w$-safe numbers can be performed in $\poly(\log w)$ time.

Our main result in this section is the following.
\begin{proposition}[\txtsc{PopulationDynamics}] \label{prop:pop}
  Let $s\le w\in \bZ_{\ge 1}$ be parameters.
  Let $s',w'\in \bZ_{\ge 1}$ be such that $s'\le w'$.
  Suppose that $\txtsc{PopulationDynamics}(D,r,\bfb,k)$ is called where
  \begin{enumerate}[label=(\arabic*)]
    \item $b_i$ is $w'$-safe for all $0\le i\le r-1$;
    \item for any $b\in \bZ_{\ge 0}$, $\lfloor D(b) w\rfloor$ can be computed using $O(1)$ arithmetic operations involving $w$-safe numbers.
  \end{enumerate}
  Let $b^* = \min\{b\in \bZ_{\ge 0}: D\left( \bZ_{> b} \right) < w^{-1}\}$.
  Let $Q$ be the channel returned.
  Then the following hold.
  \begin{enumerate}[label=(\roman*)]
    \item\label{item:prop:pop:i} $Q$ is a $(s,w)$-safe BMS channel.
    \item\label{item:prop:pop:ii} $\BP^k(\Id) \le_{\ln} Q$.
    \item\label{item:prop:pop:iii} The total computation cost is $O\left(k s^{r-1}+k s^2 b^*\right)$ arithmetic operations involving $(ww')^{O(1)}$-safe numbers.
  \end{enumerate}
\end{proposition}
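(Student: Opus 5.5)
The argument runs by induction over the calls, with separate invariants for \txtsc{BMSBuilder}, \txtsc{UpperQuantize}, \txtsc{UpperBP} and finally \txtsc{PopulationDynamics}.

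\textbf{Builder invariant.} The central step is an invariant for \txtsc{BMSBuilder}. Suppose \txtsc{AddBMS} is called with channels $P^{(1)},\ldots,P^{(m)}$ and weights $c_1,\ldots,c_m$ such that $\sum_j c_j\le 1$. Then the output $Q$ satisfies
\begin{align}
  \sum_j c_j P^{(j)} + \Bigl(1-\sum_j c_j\Bigr)\, 0 \le_{\ln} Q.
\end{align}
In particular, if $\sum_j c_j=1$ then $\bE P^{(j)}\le_{\ln} Q$.

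To prove this, consider one call \txtsc{AddBSC}$(\theta,p)$. In exact arithmetic it adds mass $(1-t)p$ at $\theta_i$ and $tp$ at $\theta_{i+1}$, which preserves the mean of $\theta^2$. The floors only remove mass, and that removed mass is later added at $\theta_s=1$ by the normalization in \txtsc{Output}. Moving mass from $\theta_i$ or $\theta_{i+1}$ up to $1$ increases $\theta^2$ pointwise. So the coupling ``$\theta^2$ goes to a random point whose conditional mean is at least $\theta^2$'' holds for each atom. Mixing over atoms gives second-order stochastic dominance of $\theta_Q^2$ over the $\theta^2$-component of the input mixture, and \cref{lem:bms-ln-coupling} yields the claim. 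The missing weight $1-\sum_j c_j$ corresponds to atoms at $\theta=0$, and these are dominated by anything. This also shows that \txtsc{UpperQuantize}$(P)$ returns $Q\ge_{\ln}P$.

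\textbf{UpperBP correctness.} Let $P$ be the quantized input. By the pre-composition, product and $\star$-product monotonicity of $\le_{\ln}$, induction on $b$ gives $(P^{\times(r-1)}\circ B)^{\star b}\le_{\ln}P_b$; this uses $P_{\lfloor b/2\rfloor}\star P_{\lceil b/2\rceil}$ and then quantization. Since the original input is $\le_{\ln}$ its quantization, $\BP$ is monotone under $\le_{\ln}$ by the same closure properties together with mixture. Write $\BP(P)=\sum_{b\le b^*}D(b)(\cdot)^{\star b}+\sum_{b>b^*}D(b)(\cdot)^{\star b}$. Each rounded weight $\lfloor D(b)w\rfloor w^{-1}\le D(b)$, and the normalization puts all leftover mass on $\Id$. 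This mass is at least $D(\bZ_{>b^*})$ plus the rounding losses. Therefore
\[
  \BP(P)\le_{\ln} \sum_{b\le b^*} D(b)P_b + \Bigl(1-\sum_{b\le b^*} D(b)\Bigr)\Id.
\]
The right-hand side is dominated by the builder output: replace $D(b)P_b$ by $\lfloor D(b)w\rfloor w^{-1}P_b+(\text{rest})P_b$, and bound $P_b\le_{\ln}\Id$. Each \txtsc{AddBSC} also rounds, so I will run the builder argument at the level of atoms, with all lost mass sent to $\theta_s$. Iterating $k$ times, using monotonicity of $\BP$, proves (ii).

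\textbf{Safety and cost.} For (i), every output atom has $\theta_i=i/s$, which is $w$-safe since $s\le w$. The masses are sums of multiples of $w^{-1}$, and the normalization $1-\sum p_i$ is also a multiple of $w^{-1}$, hence $w$-safe. For (iii), the first quantized input has $s+1$ atoms. Computing $P^{\times(r-1)}\circ B$ enumerates $(s+1)^{r-1}$ tuples of atoms, and for each tuple the posterior $\theta$ is a rational function of the $\theta_i$ and the $b_j$ with $O(r)$ operations. The $\star$-product of two $(s+1)$-atom channels has $O(s^2)$ atoms, each formed by $\theta=(\theta'+\theta'')/(1+\theta'\theta'')$, and this is done for $b^*$ values of $b$. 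In \txtsc{AddBSC} the index $i=\lfloor\theta s\rfloor$ and $t$ involve only $O(1)$ operations on numbers whose denominators are products of $O(1)$ safe numbers, so they are $(ww')^{O(1)}$-safe. After each quantization we return to $w$-safe numbers, so the complexity never accumulates across iterations.

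\textbf{Main obstacle.} The delicate point is the bookkeeping of the floor operations in the builder. It has to be checked carefully that every truncated mass is redirected to $\theta=1$ and never lost or placed lower. In particular, for the $i=s$ branch and for the final normalization, this redirection is exactly what keeps the less-noisy direction intact.
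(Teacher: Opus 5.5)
Your proposal is correct and follows essentially the same route as the paper. It uses an atomwise coupling of $\theta^2$ in \txtsc{AddBSC} (floors only lose mass, which normalization sends to $\theta_s=1$) fed into \cref{lem:bms-ln-coupling}, then handles \txtsc{UpperQuantize} and \txtsc{UpperBP} via the closure of $\le_{\ln}$ under product, pre-composition, $\star$-product and mixture, and finishes by induction over the $k$ iterations using monotonicity of $\BP$.

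One thing to tidy: your builder invariant attaches the unassigned weight $1-\sum_j c_j$ to the trivial channel $0$, but the \txtsc{UpperBP} step needs it attached to $\Id$ (the paper states it for an arbitrary BMS channel $R$). The same coupling gives this directly, since \txtsc{Output} places that weight at $\theta_s=1$, and your atom-level remark already relies on exactly that.
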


The proof of \cref{prop:pop} is by analyzing its components. The core of the algorithm is the \txtsc{BMSBuilder} data structure (\cref{alg:bms-builder}).
\begin{lemma}[\txtsc{BMSBuilder}] \label{lem:pop-bms-builder}
  Let \txtsf{builder} be a \txtsc{BMSBuilder} instance initialized with parameters $s\le w\in \bZ_{\ge 1}$.
  Let $s',w'\in \bZ_{\ge 1}$ be such that $s'\le w'$.
  Suppose that a sequence of $m$ $\txtsf{builder}.\txtsc{AddBMS}$ calls are made, with arguments $(P_i, c_i)$ ($i\in [m]$), where
  \begin{enumerate}[label=(\arabic*)]
    \item $P_i$ is a $(s',w')$-safe BMS channel;
    \item $c_i$ is $w'$-safe, $0\le c_i\le 1$, and $\sum_{i\in [m]} c_i \le 1$.
  \end{enumerate}
  Let $Q$ be the channel returned by $\txtsf{builder}.\txtsc{Output}()$.
  Then the following hold.
  \begin{enumerate}[label=(\roman*)]
    \item \label{item:lem:pop-bms-builder:i} $Q$ is a $(s,w)$-safe BMS channel.
    \item \label{item:lem:pop-bms-builder:ii} For any BMS channel $R$ and
    \begin{align}
      P = \left(\sum_{i\in [m]} c_i P_i\right) + \left(1-\sum_{i\in [m]} c_i \right) R,
    \end{align}
    we have $P \le_{\ln} Q$.
    \item \label{item:lem:pop-bms-builder:iii} The total computation cost is $O(m s')$ arithmetic operations involving $(ww')^{O(1)}$-safe numbers.
  \end{enumerate}
\end{lemma}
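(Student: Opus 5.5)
Throughout, I track the builder's state $(p_0,\ldots,p_s)$ and prove three invariants by induction over the \txtsc{AddBSC} calls. (a) Each $p_i$ is a multiple of $w^{-1}$. (b) The total mass $\sum_i p_i$ never exceeds the total mass added so far. (c) The sub-probability mixture $\sum_i p_i \BSC_{\frac{1-\theta_i}2}$ dominates the added mass in the sense of \cref{lem:bms-ln-coupling}, once the rounding loss is assigned to $\theta_s=1$.

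\textbf{Safety, item (i).} Each $\theta_i=i/s$ is $s$-safe, hence $w$-safe since $s\le w$. Every increment has the form $\lfloor \cdot\, w\rfloor w^{-1}$, so invariant (a) holds throughout. At \txtsc{Output}, the normalization adds $1-\sum_i p_i$, which is also a multiple of $w^{-1}$. By (b) and $\sum_i c_i\le 1$ this amount is nonnegative, so $p_s\in[0,1]$. Each output mass $p_i$ is therefore $w$-safe. The output has at most $s+1$ atoms; if two $\theta$-values coincide after symmetrization, the atoms merge and safety is unaffected.

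\textbf{Less-noisy domination, item (ii).} This is the main step. First consider a single call $\txtsc{AddBSC}(\theta,p)$ with $\theta\in[\theta_i,\theta_{i+1}]$. Without rounding, it would replace mass $p$ on $\theta$ by masses $(1-t)p$ on $\theta_i$ and $tp$ on $\theta_{i+1}$. Because $t$ is interpolated in $\theta^2$, this preserves the mean of $\theta^2$.

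The floor operations discard mass $e = p-\lfloor (1-t)pw\rfloor w^{-1}-\lfloor tpw\rfloor w^{-1}\ge 0$. I account for $e$ as extra mass placed at $\theta_s=1$. The resulting distribution is
\begin{align}
  \lfloor (1-t)pw\rfloor w^{-1}\,\delta_{\theta_i^2}+\lfloor tpw\rfloor w^{-1}\,\delta_{\theta_{i+1}^2}+e\,\delta_1 .
\end{align}
I claim this second-order stochastically dominates $p\,\delta_{\theta^2}$. To see it, move mass from the rounded points toward the unrounded split: increasing a value preserves domination. Formally, I use the coupling characterization: the conditional law given the point $\theta^2$ has mean at least $\theta^2$, because each unit of mass shifted to $1$ only raises the mean. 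The case $i=s$ is the same argument with all mass at $1$.

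Summing over all calls, the mixture of dominating distributions dominates the mixture. The final normalization places the leftover mass $1-\sum_i c_i$ at $\theta=1$, and $\theta=1$ dominates $\theta_R^2$ for any BMS channel $R$. Hence $\theta_Q^2$ second-order dominates $\theta_P^2$, and \cref{lem:bms-ln-coupling} gives $P\le_{\ln}Q$.

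\textbf{Cost, item (iii).} There are $m$ \txtsc{AddBMS} calls, each looping over at most $s'+1$ atoms, for $O(ms')$ \txtsc{AddBSC} calls in total. Each \txtsc{AddBSC} call does $O(1)$ work once the index $i=\lfloor s\theta\rfloor$ is computed.

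For the numerics, $\theta=1-2\Delta$ and $\mu_P(\Delta)$ are $w'$-safe and $c$ is $w'$-safe, so the argument $p = c\cdot\mu_P(\Delta)$ is $w'^2$-safe. The quantity $t=(\theta^2s^2-i^2)/(2i+1)$ has denominator dividing $w'^2(2s+1)$. The products $(1-t)pw$ and $tpw$ therefore have denominators of size $(ww')^{O(1)}$, so their floors are computable exactly. This gives $O(ms')$ operations on $(ww')^{O(1)}$-safe numbers.

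The main obstacle is making the rounding argument in item (ii) airtight for second-order dominance. This is handled by the observation that rounding down and reassigning the lost mass to $\theta=1$ can only increase $\theta^2$.
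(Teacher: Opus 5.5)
Your proposal is correct and follows essentially the same route as the paper. You send the rounding loss of each \txtsc{AddBSC} call to $\theta=1$ and use the per-atom inequality that the rounded masses weighted by $\theta_\ell^2$, plus the loss, are at least $p\theta^2$; you then build the resulting coupling and invoke \cref{lem:bms-ln-coupling}, and safety follows because every stored mass and the normalization amount are multiples of $w^{-1}$. The only cosmetic difference is that the paper first degrades $R$ to $\Id$ before coupling, whereas you couple $\theta_R^2$ directly with $1$, which is equally valid.
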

\begin{proof}
  Suppose $P_i = \sum_{1\le j\le s_i} p_{i,j} \BSC_{\delta_{i,j}}$ ($i\in [m]$).
  Then \txtsc{AddBSC} is called with $(\theta = 1-2\delta_{i,j}, p = c_i p_{i,j})$ for $i\in [m]$, $j\in [s_i]$.
  During the call corresponding to $(i,j)$, suppose we performed update $p_\ell \gets p_\ell + q_{i,j,\ell}$ for $\ell\in T_{i,j}$, for some subset $T_{i,j} \subseteq \{0,\ldots,s\}$. Note that $T_{i,j}$ is either $\{s\}$ or $\{\ell,\ell+1\}$ for some $0\le \ell\le s-1$.
  Let $r_{i,j} = c_i p_{i,j} - \sum_{\ell\in T_{i,j}} q_{i,j,\ell}$. (Note that $r_{i,j}$ is not necessarily $w$-safe and is only for analysis purposes.)
  For any number $x\in [0, 1]$, we have $\lfloor x w \rfloor w^{-1} \le x$.
  So
  \begin{align}
    \sum_{\ell\in T_{i,j}} q_{i,j,\ell} &\le c_i p_{i,j},
  \end{align}
  and thus $r_{i,j} \ge 0$.
  Furthermore,
  \begin{align} \label{eqn:lem:pop-bms-builder:coupling}
    \sum_{\ell\in T_{i,j}} q_{i,j,\ell} \theta_\ell^2 + r_{i,j} \ge c_i p_{i,j} (1-2\delta_{i,j})^2.
  \end{align}

  Let $q_\ell$ ($0\le \ell \le s$) denote the value of $p_\ell$ just before the \txtsc{Output} normalization.
  Then $q_\ell = \sum_{i\in [m], j\in [s_i]} q_{i,j,\ell}$. (When $\ell\not \in T_{i,j}$ we let $q_{i,j,\ell}=0$.)
  The normalization adds
  \begin{align}
    1-\sum_{0\le \ell\le s} q_\ell
    = 1-\sum_{i\in [m]} c_i + \sum_{i\in [m],j\in [s_i]} r_{i,j}
  \end{align}
  to $p_s$.
  Let $Q$ denote the final output channel.
  Because $q_{i,j,\ell} w \in \{0,\ldots,w\}$ for all $i\in [m], j\in [s_i], \ell \in T_{i,j}$, and because the normalization amount is also a multiple of $w^{-1}$, the final weights of $Q$ are all $w$-safe.
  Therefore the output channel $Q$ is $(s,w)$-safe. This proves \cref{item:lem:pop-bms-builder:i}.

  Consider the coupling where for every $i\in [m]$, $j\in [s_i]$, we couple $(1-2\delta_{i,j}, \theta_\ell)$ with probability $q_{i,j,\ell}$ for $\ell\in T_{i,j}$, and couple $(1-2\delta_{i,j}, 1)$ with probability $r_{i,j}$. Finally, we couple $(1,1)$ with probability $1-\sum_{i\in [m]} c_i$.
  By \cref{eqn:lem:pop-bms-builder:coupling} and \cref{lem:bms-ln-coupling}, we have
  \begin{align}
    P &= \left(\sum_{i\in [m]} c_i P_i\right) + \left( 1-\sum_{i\in [m]} c_i \right) R \\
    \nonumber &\le_{\deg} \left(\sum_{i\in [m]} c_i P_i\right) + \left( 1-\sum_{i\in [m]} c_i \right) \Id \\
    \nonumber & = \left(\sum_{i\in [m]} \sum_{j\in [s_i]} c_i p_{i,j} \BSC_{\delta_{i,j}} \right) + \left( 1-\sum_{i\in [m]} c_i \right) \Id \\
    \nonumber & \le_{\ln} \left(\sum_{i\in [m]} \sum_{j\in [s_i]} \left( \sum_{\ell\in T_{i,j}} q_{i,j,\ell} \BSC_{\frac{1-\theta_\ell}2} + r_{i,j} \Id \right) \right) + \left( 1-\sum_{i\in [m]} c_i \right) \Id \\
    \nonumber & = \sum_{0\le \ell<s} q_\ell \BSC_{\frac{1-\theta_\ell}2}
    + \left(q_s + \sum_{i\in [m],j\in [s_i]} r_{i,j} + 1-\sum_{i\in [m]} c_i\right) \Id
    = Q.
  \end{align}
  This proves \cref{item:lem:pop-bms-builder:ii}.

  During the execution, all intermediate values are $(w w')^{O(1)}$-safe.
  This proves \cref{item:lem:pop-bms-builder:iii}.
\end{proof}

\begin{lemma}[\txtsc{UpperQuantize}] \label{lem:pop-restrict}
  Let $s\le w\in \bZ_{\ge 1}$ be parameters.
  Let $s',w'\in \bZ_{\ge 1}$ be such that $s'\le w'$.
  Suppose that $\txtsc{UpperQuantize}(P)$ is called where $P$ is a $(s',w')$-safe BMS channel.
  Let $Q$ be the channel returned.
  Then the following hold.
  \begin{enumerate}[label=(\roman*)]
    \item $Q$ is a $(s,w)$-safe BMS channel.
    \item $P \le_{\ln} Q$.
    \item The total computation cost is $O(s')$ arithmetic operations involving $(ww')^{O(1)}$-safe numbers.
  \end{enumerate}
\end{lemma}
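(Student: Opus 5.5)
This is a direct specialization of \cref{lem:pop-bms-builder}. \txtsc{UpperQuantize}$(P)$ initializes a fresh \txtsc{BMSBuilder}, makes exactly one call $\txtsf{builder}.\txtsc{AddBMS}(P,1)$, and returns $\txtsf{builder}.\txtsc{Output}()$. So we apply \cref{lem:pop-bms-builder} with $m=1$, $P_1=P$ and $c_1=1$, after checking its hypotheses.

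\begin{enumerate}[label=(\arabic*)]
  \item $P_1=P$ is $(s',w')$-safe by assumption.
  \item $c_1=1$ is $w'$-safe, since $1\cdot 1\in\bZ$ with $u=1\in[w']$. Also $0\le c_1\le 1$ and $\sum_i c_i=1\le 1$.
\end{enumerate}

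The parameter conditions $s\le w$ and $s'\le w'$ are the same in both statements.

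The three conclusions then follow one by one.

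\begin{enumerate}[label=(\roman*)]
  \item Item \cref{item:lem:pop-bms-builder:i} of \cref{lem:pop-bms-builder} gives directly that $Q$ is $(s,w)$-safe.
  \item Item \cref{item:lem:pop-bms-builder:ii} says that for every BMS channel $R$ we have $c_1P_1+(1-c_1)R\le_{\ln} Q$. Since $1-c_1=0$, the channel $R$ drops out of the mixture, which leaves $P\le_{\ln}Q$. Concretely, take $R=\Id$; the weight on $R$ is zero, so the mixture equals $P$.
  \item Item \cref{item:lem:pop-bms-builder:iii} bounds the cost by $O(ms')=O(s')$ arithmetic operations on $(ww')^{O(1)}$-safe numbers. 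To be thorough, we also count \txtsc{Initialize} and \txtsc{Output}. These take $O(s)$ operations on $w$-safe numbers such as $i/s$ with $s\le w$. In the cost analysis this overhead is absorbed into (or accounted alongside) the builder's own cost, consistent with how \cref{lem:pop-bms-builder} counts.
\end{enumerate}

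There is no real obstacle here. The only points needing care are checking that $c=1$ is admissible as a $w'$-safe weight, and noting that the residual mass $1-\sum c_i$ vanishes, so the ``$+R$'' term in \cref{lem:pop-bms-builder}\cref{item:lem:pop-bms-builder:ii} disappears.
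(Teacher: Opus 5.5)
Your proposal is correct and matches the paper's argument: the paper proves this lemma in one line by invoking \cref{lem:pop-bms-builder} with $m=1$, and your checks that $c_1=1$ is $w'$-safe and that the residual weight $1-c_1$ vanishes just make that specialization explicit. Your side remark on the $O(s)$ cost of \txtsc{Initialize} and \txtsc{Output} also agrees with the paper, since its builder lemma likewise counts only the $O(ms')$ cost of the \txtsc{AddBSC} calls.
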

\begin{proof}
  This follows from \cref{lem:pop-bms-builder} with $m=1$.
\end{proof}

\begin{lemma}[\txtsc{UpperBP}] \label{lem:pop-bp}
  Let $s\le w\in \bZ_{\ge 1}$ be parameters.
  Let $s',w'\in \bZ_{\ge 1}$ be such that $s'\le w'$.
  Suppose that $\txtsc{UpperBP}(D,r,\bfb,P)$ is called where
  \begin{enumerate}[label=(\arabic*)]
    \item $P$ is a $(s',w')$-safe BMS channel;
    \item $b_i$ is $w'$-safe for all $0\le i\le r-1$;
    \item for any $b\in \bZ_{\ge 0}$, $\lfloor D(b) w\rfloor$ can be computed using $O(1)$ arithmetic operations involving $w$-safe numbers.
  \end{enumerate}
  Let $b^* = \min\{b\in \bZ_{\ge 0}: D\left( \bZ_{> b} \right) < w^{-1}\}$.
  Let $Q$ be the channel returned.
  Then the following hold.
  \begin{enumerate}[label=(\roman*)]
    \item\label{item:lem:pop-bp:i} $Q$ is a $(s,w)$-safe BMS channel.
    \item\label{item:lem:pop-bp:ii} $\BP(P) \le_{\ln} Q$.
    \item\label{item:lem:pop-bp:iii} The total computation cost is $O\left(s^{r-1}+s^2 b^*\right)$ arithmetic operations involving $(ww')^{O(1)}$-safe numbers.
  \end{enumerate}
\end{lemma}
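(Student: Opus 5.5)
The proof follows the structure of \txtsc{UpperBP} and reduces each step to \cref{lem:pop-restrict,lem:pop-bms-builder}, together with the fact that $\le_{\ln}$ respects products, $\star$-products, pre-composition and mixtures.

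First, the call $P\gets\txtsc{UpperQuantize}(P)$ produces, by \cref{lem:pop-restrict}, an $(s,w)$-safe channel $\bar P$ with $P\le_{\ln}\bar P$. This costs $O(s')$ operations.

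Next, consider $\bar P^{\times(r-1)}\circ B$. Its output is determined by the $\theta$-values of the $r-1$ coordinates, so it is a mixture over at most $(s+1)^{r-1}$ tuples $(\theta_{i_1},\ldots,\theta_{i_{r-1}})$. Each tuple has weight $\prod \mu_{\bar P}$, which is $w^{O(r)}$-safe. The induced BMS component has $\theta$-value equal to an explicit rational function of the $\theta_{i_j}$ and the $b_k$. This value is obtained by summing over sign patterns $y$ with $B(y|\pm)$ weighted by $\prod(1\pm\theta_{i_j}y_j)/2$, and it is $(ww')^{O(1)}$-safe since $r$ is a constant. Hence this channel is $(s'',w'')$-safe with $w''=(ww')^{O(1)}$. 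It is built in $O(s^{r-1})$ operations, after symmetrizing over orderings or simply enumerating all tuples. Applying \cref{lem:pop-restrict} gives $P_1$ with $\bar P^{\times(r-1)}\circ B\le_{\ln}P_1$, and hence $P^{\times(r-1)}\circ B\le_{\ln}P_1$ by monotonicity under product and pre-composition.

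Then, by induction on $b$, $(P^{\times(r-1)}\circ B)^{\star b}\le_{\ln}P_b$. The base cases are $b=0,1$. For $b\ge2$, use $\star$-monotonicity and \cref{lem:pop-restrict} applied to $P_{\lfloor b/2\rfloor}\star P_{\lceil b/2\rceil}$. That product has at most $(s+1)^2$ atoms, and its $\theta$-values $(\theta+\theta')/(1+\theta\theta')$ and weights are $w^{O(1)}$-safe. Each step costs $O(s^2)$, for a total of $O(s^2 b^*)$.

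Finally, pass the weights $c_b=\lfloor D(b)w\rfloor w^{-1}$ to \txtsf{builder}. These are $w$-safe, and $\sum_b c_b\le1$. Apply \cref{lem:pop-bms-builder} with $R=\Id$ and $m=b^*+1$; this gives a $(s,w)$-safe output $Q$, proving \cref{item:lem:pop-bp:i}.

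For \cref{item:lem:pop-bp:ii}, write $\BP(P)=\sum_{b}D(b)(P^{\times(r-1)}\circ B)^{\star b}$. This is $\le_{\ln}$ the mixture $\sum_{b\le b^*}c_bP_b+(1-\sum c_b)\Id$. The justification is to split each $D(b)=c_b+(D(b)-c_b)$, bound the $(D(b)-c_b)$ parts and the tail $b>b^*$ by $\Id$ (the $\le_{\ln}$-maximum), and use monotonicity under mixtures. The latter mixture is $\le_{\ln}Q$ by \cref{lem:pop-bms-builder}\ref{item:lem:pop-bms-builder:ii}. The builder cost is $O(b^* s)$, which is dominated by the other terms.

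The main obstacle is the bookkeeping of numerical safety, in particular showing that the $\theta$-values of $\bar P^{\times(r-1)}\circ B$ and of $\star$-products stay $(ww')^{O(1)}$-safe. This needs care because quotients of safe numbers have denominators that multiply. The plan is to note that each such $\theta$ is a ratio of polynomials of bounded degree $O(r)$ in numbers with denominators at most $w$ or $w'$. Clearing denominators then gives numerator and denominator integers bounded by $(ww')^{O(1)}$, so the value is $(ww')^{O(1)}$-safe as required by the lemmas invoked.
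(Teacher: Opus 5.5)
Your proposal is correct and follows essentially the same route as the paper. It uses induction via \txtsc{UpperQuantize} to get $(P^{\times(r-1)}\circ B)^{\star b}\le_{\ln} P_b$, absorbs the tail $D(\bZ_{>b^*})$ and the rounding losses $D(b)-c_b$ into $\Id$, invokes the \txtsc{BMSBuilder} lemma with $R=\Id$, and does the same cost accounting; you are slightly more explicit than the paper about the initial quantization $P\le_{\ln}\bar P$ (propagated through product and pre-composition) and about the numerical-safety bookkeeping, which the paper only asserts.
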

\begin{proof}
  \cref{item:lem:pop-bp:i} follows from \cref{lem:pop-bms-builder}.

  By induction on $0\le b\le b^*$ and using \cref{lem:pop-restrict}, we have
  \begin{align}
    P_b \ge_{\ln} \left( P^{\times (r-1)} \circ B \right)^{\star b}, \qquad 0\le b\le b^*.
  \end{align}
  Let $c_b = \lfloor D(b) w \rfloor w^{-1}$.
  By \cref{lem:pop-bms-builder} we have
  \begin{align}
    \BP(P) &= \sum_{b\ge 0} D(b) \left( P^{\times (r-1)} \circ B \right)^{\star b} \\
    \nonumber &\le_{\deg} \left(\sum_{0\le b\le b^*} D(b) \left( P^{\times (r-1)} \circ B \right)^{\star b} \right) + D(\bZ_{>b^*}) \Id \\
    \nonumber &\le_{\ln} \left(\sum_{0\le b\le b^*} D(b) P_b\right) + D(\bZ_{>b^*}) \Id \\
    \nonumber &\le_{\deg} \left(\sum_{0\le b\le b^*} c_b P_b\right) + \left( 1-\sum_{0\le b\le b^*} c_b\right) \Id \\
    \nonumber &\le_{\ln} Q,
  \end{align}
  where $Q$ is the final output channel.
  This proves \cref{item:lem:pop-bp:ii}.

  Computation of $\txtsc{UpperQuantize}(P^{\times (r-1)} \circ B)$ takes $O(s^{r-1})$ arithmetic operations involving $(ww')^{O(1)}$-safe numbers.
  Computation of each $P_b$ ($2\le b\le b^*$) takes $O(s^2)$ arithmetic operations involving $(ww')^{O(1)}$-safe numbers.
  This proves \cref{item:lem:pop-bp:iii}.
\end{proof}

\begin{proof}[Proof of \cref{prop:pop}]
  \cref{item:prop:pop:i,item:prop:pop:iii} follow from the corresponding claims in \cref{lem:pop-bp}.

  By induction on $0\le i\le k$ and using \cref{lem:pop-bp}, we have
  \begin{align}
    M_i \ge_{\ln} \BP^i(\Id), \qquad 0\le i\le k.
  \end{align}
  This proves \cref{item:prop:pop:ii}.
\end{proof}

\section{Reconstruction threshold for the special BOHT} \label{sec:special-boht}
In this section we apply the methods from \cref{sec:robust,sec:pop} to $\BOHT(\Pois(d),r,\lambda)$, the special BOHT model on a Galton-Watson hypertree with Poisson offspring distribution, and prove \cref{thm:boht}.

\subsection{Robust non-reconstruction} \label{sec:special-boht:robust}
Recall that for the special BOHT model,
\begin{align}
  b_k = \left\{
    \begin{array}{ll}
      \frac{1-\lambda}{2^{r-1}}, & \text{if}~0\le k\le r-2,\\
      \lambda + \frac{1-\lambda}{2^{r-1}}, & \text{if}~k=r-1.
    \end{array}
  \right.
\end{align}
We compute the function $f$ defined in \cref{eqn:prop:robust:f}.
For $0\le j\le i\le r-1$, we have
\begin{align}
  p_{i,j} = \sum_{0\le k\le r-1} b_k \binom ij \binom{r-1-i}{k-j}
  = \binom ij 2^{-i} (1-\lambda) + \lambda \mathbbm{1}\{i=j\}.
\end{align}
So for $1\le i\le r-1$,
\begin{align}
  c_i = \frac{(p_{i,i}-p_{i,0})^2}{p_{i,i}+p_{i,0}} = \frac{\lambda^2}{\lambda + 2^{1-i} (1-\lambda)}.
\end{align}
Then
\begin{align}
  g(x) = \sum_{1\le i\le r-1} \binom{r-1}i x^i (1-x)^{r-1-i} \cdot \frac{\lambda^2}{\lambda + 2^{1-i} (1-\lambda)}.
\end{align}
Finally, for $D = \Pois(d)$,
\begin{align} \label{eqn:special-boht:f}
  f(x) &= 1-\exp(-d g(x)) \\
  \nonumber &= 1-\exp\left( -d \sum_{1\le i\le r-1} \binom{r-1}i x^i (1-x)^{r-1-i} \cdot \frac{\lambda^2}{\lambda + 2^{1-i} (1-\lambda)} \right).
\end{align}

The following proposition was previously proved in \cite{gu2024community} using a Taylor expansion analysis of the BP operator. The proof below is considerably simpler and gives a better robustness parameter $\epsilon$.
\begin{proposition} \label{prop:special-robust}
  Fix $r\ge 3$. There exists $\epsilon=\epsilon(r)>0$ such that the following holds.
  For any $\lambda\in \left[-\frac1{2^{r-1}-1}, 1\right]$ and $d$ such that $(r-1) d \lambda^2 \le 1$, for the model $\BOHT(\Pois(d),r,\lambda)$, robust reconstruction is impossible with respect to all BMS channels $W$ with $C_{\chi^2}(W) \le \epsilon$.
\end{proposition}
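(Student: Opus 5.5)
The plan is to reduce everything to the scalar recursion supplied by \cref{prop:robust} and then study the function \cref{eqn:special-boht:f} uniformly in $(d,\lambda)$.

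\textbf{Step 1 (monotonicity of the scalar map).} By \cref{prop:robust} and \cref{lem:bms-ln-bec}, $C_{\chi^2}(\BP(P))\le f(C_{\chi^2}(P))$. I want $f$ to be nondecreasing in $x$ and nondecreasing in $d$, so that iterating gives $C_{\chi^2}(\BP^k(W))\le f^k(\epsilon)$.
\begin{itemize}
\item Monotonicity in $d$ is clear from $f=1-\exp(-dg)$ once $g\ge 0$.
\item For monotonicity in $x$, write $g(x)=\bE\, c_I$ with $I\sim \mathrm{Bin}(r-1,x)$, where $c_i=\lambda^2 a_i$ and $a_i=1/(\lambda+2^{1-i}(1-\lambda))$.
\item The denominator is decreasing in $i$ because $1-\lambda\ge 0$. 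At the extreme value $\lambda=-\frac1{2^{r-1}-1}$, $i=r-1$, it equals $\frac{1}{2^{r-1}-1}>0$, so it is positive throughout.
\item Hence $c_i$ is nondecreasing in $i$, and stochastic monotonicity of the binomial in $x$ gives that $g$, and hence $f$, is nondecreasing in $x$.
\end{itemize}
It therefore suffices to treat the boundary case $d=1/((r-1)\lambda^2)$; the case $\lambda=0$ is trivial since then $g\equiv 0$. In the boundary case
\begin{align}
  f(x)=F_\lambda(x):=1-\exp(-h_\lambda(x)),\qquad h_\lambda(x)=\frac{1}{r-1}\sum_{1\le i\le r-1}\binom{r-1}{i}x^i(1-x)^{r-1-i}a_i .
\end{align}
Since $a_1=1$, we have $h_\lambda(x)=x+\frac{r-2}{2}(a_2-2)x^2+O(x^3)$.

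\textbf{Step 2 (second-order analysis).} Expanding,
\begin{align}
  F_\lambda(x)=x+\tfrac12\bigl((r-2)(a_2-2)-1\bigr)x^2+O(x^3).
\end{align}
Here $a_2-2=-\frac{2\lambda}{1+\lambda}$, which is at most $\frac{2}{2^{r-1}-2}$ over the allowed range of $\lambda$. So $(r-2)(a_2-2)\le \frac{2(r-2)}{2^{r-1}-2}$. This is $<1$ for $r\ge 4$ and equals $1$ exactly when $r=3$ and $\lambda=-\frac13$. This is the second-order gain from the $\star$-power, via the $-h^2/2$ term of the exponential, compensating the possibly positive second-order term of $g$.
\begin{itemize}
\item Away from that single degenerate point, the $x^2$ coefficient is strictly negative.
\item At the degenerate point, a direct computation gives $h(x)=x+x^2/2$ and $F(x)=x-x^3/3+O(x^4)<x$ for small $x>0$.
\end{itemize}

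\textbf{Step 3 (uniformity; the main obstacle).} I need a single $\epsilon$ working for all $\lambda$ in the compact interval $[-\frac1{2^{r-1}-1},1]$. Observe that $a_i$ is continuous in $\lambda$ (denominators bounded away from $0$), so $(\lambda,x)\mapsto F_\lambda(x)$ is smooth on a compact set with uniformly bounded third derivatives in $x$.
\begin{itemize}
\item Near the degenerate point $(r,\lambda)=(3,-\frac13)$, I would write $F_\lambda(x)-x=-\alpha(\lambda)x^2-\beta(\lambda)x^3+O(x^4)$. Here $\alpha\ge0$, and $\beta$ is close to $\frac13>0$, so $F_\lambda(x)<x$ for $0<x\le\epsilon$ uniformly.
\item Elsewhere $\alpha(\lambda)$ is bounded below by a positive constant, and the uniform $O(x^3)$ remainder handles it.
\end{itemize}
Once $F_\lambda(x)<x$ on $(0,\epsilon]$, the iterates $f^k(\epsilon)$ decrease to a fixed point of $f$ in $[0,\epsilon]$, which must be $0$. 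Hence $C_{\chi^2}(\BP^k(W))\to0$, so robust reconstruction is impossible.
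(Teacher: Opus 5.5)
Your proof is correct. It differs from the paper mainly in how you get an $\epsilon$ that is uniform in $\lambda$.

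\textbf{The paper's route.} Each $a_i=1/(\lambda+2^{1-i}(1-\lambda))$ is non-increasing in $\lambda$, because the denominator is affine in $\lambda$ with slope $1-2^{1-i}\ge 0$. Hence $f_{r,\lambda}(x)\le f_{r,-1/(2^{r-1}-1)}(x)$ pointwise in $x$. A single Taylor expansion at this extremal $\lambda$ gives $x-\frac13x^3+O(x^4)$ for $r=3$ and $x-\frac{2^{r-2}-r+1}{2^{r-1}-2}x^2+O(x^3)$ for $r\ge 4$. This produces one $\epsilon$ for all $\lambda$ at once.

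\textbf{Your route.} You use this monotonicity only for the $x^2$ coefficient, via $a_2-2=-2\lambda/(1+\lambda)$. You then need the compactness argument of Step 3, including separate treatment of a neighbourhood of $(r,\lambda)=(3,-\frac13)$ via the cubic term. That argument is sound: $\alpha\ge 0$, $\beta$ stays near $\frac13$, and the $O(x^4)$ remainder is uniform because the denominators are bounded below by $1/(2^{r-1}-1)$.

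\textbf{Comparison.} Extending the monotonicity from $a_2$ to all $a_i$ would have made Step 3 unnecessary. The pointwise comparison $f_{r,\lambda}\le f_{r,\lambda_0}$ is also what the paper reuses in the numerical cases $r=4,5,6$. On the other hand, your write-up is more explicit than the paper's on three points:
\begin{itemize}
\item the reduction from $(r-1)d\lambda^2<1$ to the boundary case, using monotonicity of $f$ in $d$;
\item the degenerate case $\lambda=0$;
\item why the iterates actually tend to $0$. You use monotonicity of $f$ in $x$ for this; continuity of $f$ would suffice instead.
\end{itemize}

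One small notational point: $c_i=\lambda^2 a_i$ holds only for $i\ge 1$, while $c_0=0$. Monotonicity of $c_i$ in $i$ still holds, so this does not affect the argument.
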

\begin{proof}
  For $(r-1) d \lambda^2 = 1$, \cref{eqn:special-boht:f} simplifies to
  \begin{align} \label{eqn:special-boht:f-r-lm}
    f(x) = f_{r,\lambda}(x) &= 1-\exp\left( - \frac 1{r-1} \sum_{1\le i\le r-1} \binom{r-1}i x^i (1-x)^{r-1-i} \cdot \frac{1}{\lambda + 2^{1-i} (1-\lambda)} \right).
  \end{align}
  Note that for fixed $r,x$, $f_{r,\lambda}(x)$ is non-increasing in $\lambda\in \left[ -\frac 1{2^{r-1}-1}, 1 \right]$.
  In particular, $f_{r,\lambda}(x) \le f_{r,-\frac 1{2^{r-1}-1}}(x)$.

  For $r=3$, we have
  \begin{align} \label{eqn:special-boht:f-r-lm-3}
    f_{r,-\frac 1{2^{r-1}-1}}(x) = f_{3,-\frac 13}(x) = 1-\exp\left(-x-\frac 12 x^2\right) =
    x - \frac 13 x^3 + O_r(x^4).
  \end{align}
  For $r\ge 4$, we have
  \begin{align}
    f_{r,-\frac 1{2^{r-1}-1}}(x) &= 1-\exp\left( - x(1-(r-2)x) - \frac{r-2}2 \cdot x^2 \cdot \frac {2^{r-1}-1}{2^{r-2}-1} + O_r(x^3) \right) \\
    \nonumber &= x - \frac{2^{r-2}-r+1}{2^{r-1}-2}\cdot x^2+ O_r(x^3).
  \end{align}
  Therefore, for any $r\ge 3$, there exists $\epsilon=\epsilon(r) > 0$ such that $f_{r,\lambda}(x) < x$ for all $0 < x \le \epsilon$.
  By \cref{prop:robust}, for all BMS channels $P$ with $0<C_{\chi^2}(P) \le \epsilon$, we have
  \begin{align}
    C_{\chi^2}(\BP(P)) \le f\left(C_{\chi^2}(P)\right) < C_{\chi^2}(P).
  \end{align}
  Therefore for BMS channels $W$ with $C_{\chi^2}(W) \le \epsilon$, we have
  \begin{align}
    \lim_{k\to \infty} C_{\chi^2}\left(\BP^k(W)\right) = 0,
  \end{align}
  and robust reconstruction is impossible with respect to $W$.
\end{proof}

\subsection{Case \texorpdfstring{$r=3$}{r=3}} \label{sec:special-boht:r3}
\begin{figure}[h!]
  \centering
  \includegraphics[width=0.5\textwidth]{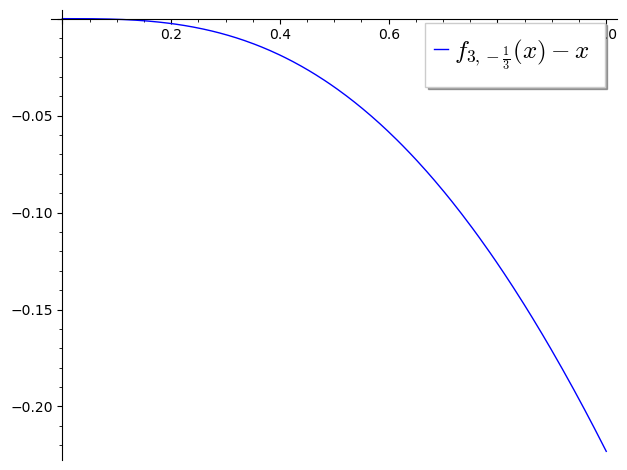}
  \caption{Plot of $f_{3,-\frac 13}(x)-x$ for $x\in [0, 1]$.}
  \label{fig:boht-r3}
\end{figure}

For $r=3$, any $\lambda\in \left[-\frac 13, 1\right]$, and $0<x\le 1$, we have
\begin{align}
  f_{r,\lambda}(x) \le f_{3,-\frac 13}(x)
  = 1-\exp\left(-x-\frac 12 x^2\right) < x,
\end{align}
where the last step follows from
\begin{align}
  \log(1-x) = -\sum_{i\ge 1} \frac{x^i}{i} < -x - \frac 12 x^2, \qquad 0<x\le 1.
\end{align}
\cref{fig:boht-r3} shows the plot of $f_{3,-\frac 13}(x)-x$ for $x\in [0, 1]$.

For any non-trivial BMS channel $P$, we have
\begin{align}
  C_{\chi^2}(\BP(P)) \le f\left(C_{\chi^2}(P)\right) < C_{\chi^2}(P).
\end{align}
Therefore
\begin{align}
  \lim_{k\to \infty} C_{\chi^2}\left(\BP^k(\Id)\right) = 0
\end{align}
and reconstruction is impossible below the KS threshold for $r=3$ and any $\lambda\in \left[-\frac 13, 1\right]$.
This proves the $r=3$ case of \cref{thm:boht}.

\subsection{Case \texorpdfstring{$r=4$}{r=4}} \label{sec:special-boht:r4}
\begin{figure}[h!]
  \centering
  \begin{subfigure}{0.5\textwidth}
    \centering
    \includegraphics[width=\linewidth]{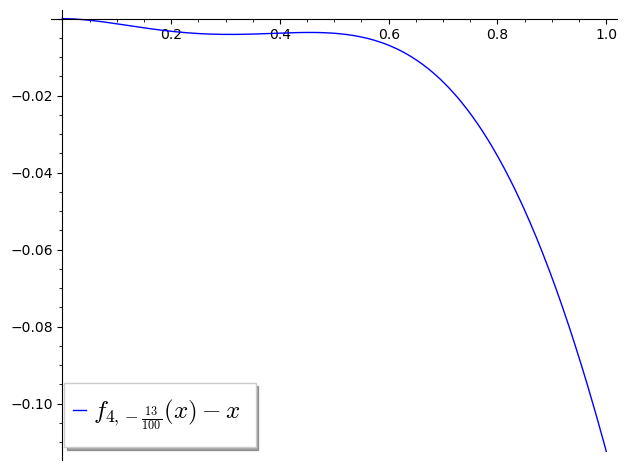}
    \caption{Plot of $f_{4,-\frac{13}{100}}(x)-x$ for $x\in [0, 1]$.}
    \label{fig:boht-r4:a}
  \end{subfigure}%
  \begin{subfigure}{0.5\textwidth}
    \centering
    \includegraphics[width=\linewidth]{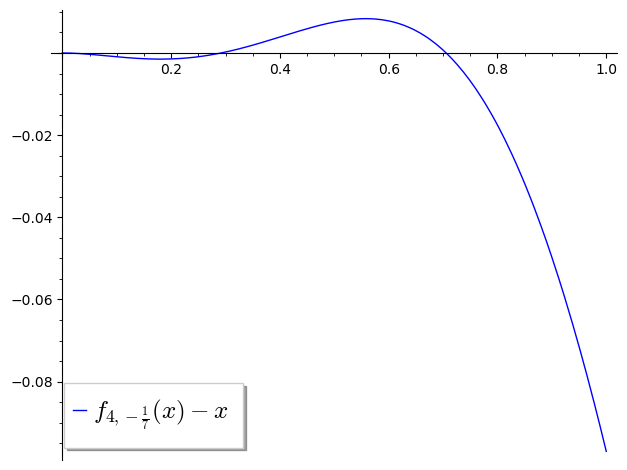}
    \caption{Plot of $f_{4,-\frac 17}(x)-x$ for $x\in [0, 1]$.}
    \label{fig:boht-r4:b}
  \end{subfigure}
  \begin{subfigure}{0.5\textwidth}
    \centering
    \includegraphics[width=\linewidth]{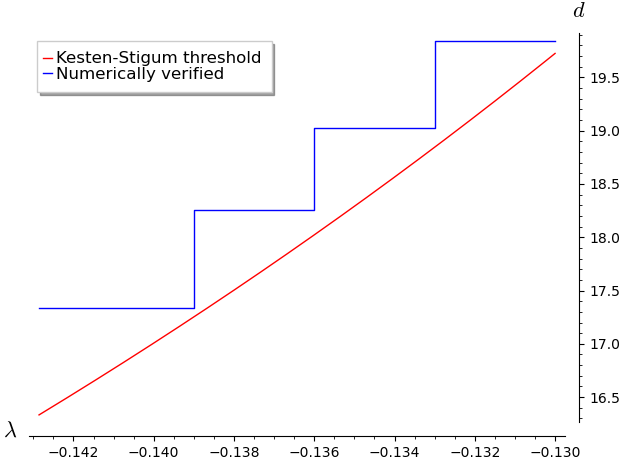}
    \caption{Plot of execution results for $r=4$. Red line is the Kesten-Stigum threshold $(r-1) d \lambda^2 = 1$. Parameters below the blue line are certified by the rigorous computation to satisfy $\lim_{k\to \infty} C_{\chi^2}\left(\BP^k(\Id)\right) \le 0.2$. See also \cref{tab:boht-r4}.}
    \label{fig:boht-r4:c}
  \end{subfigure}
  \caption{Plots for $r=4$.}
  \label{fig:boht-r4}
\end{figure}

\begin{table}[h!]
  \centering
  \begin{tabular}{|c|c|c|c|c|}
    \hline $\lambda$ & $-\frac{1}{7}$ & $-\frac{139}{1000}$ & $-\frac{136}{1000}$ & $-\frac{133}{1000}$ \\
    \hline $d-\frac 1{(r-1)\lambda^2}$ & $1$ & $1$ & $1$ & $1$ \\
    \hline $k$ & $47$ & $33$ & $27$ & $24$ \\
    \hline
  \end{tabular}
  \caption{Execution results for $r=4$. Support size $s=8$. Precision parameter $w=2^{32}$. See also \cref{fig:boht-r4:c,app:code}.}
  \label{tab:boht-r4}
\end{table}

We take $\lambda_0 = -\frac{13}{100}$ and divide into two cases $\lambda\in [\lambda_0, 1]$ and $\lambda\in \left[ -\frac 17, \lambda_0\right]$.

\paragraph{Case $\lambda\in [\lambda_0, 1]$}
For any $\lambda\in [\lambda_0, 1]$ and $0<x\le 1$, we have
\begin{align}
  f_{r,\lambda}(x) \le f_{r,\lambda_0}(x) < x,
\end{align}
where the last step follows from
\begin{align} \label{eqn:boht-r4:num}
  -\log\left(1-f_{r,\lambda_0}(x)\right) = x + \frac{26}{87} x^{2} + \frac{1569}{1769} x^{3}
  < \sum_{1\le i\le 10} \frac{x^i}{i} < -\log(1-x), \qquad \forall 0<x\le 1.
\end{align}
The middle inequality in \cref{eqn:boht-r4:num} is an inequality between polynomials and can be proved rigorously. In the following, whenever we use an inequality of the form $f_{r,\lambda}(x) < x$ for a given $\lambda$, we prove it by the same method.
See \cref{fig:boht-r4:a} for a plot of $f_{r,\lambda_0}(x)-x$.
Therefore reconstruction is impossible below the KS threshold for $\lambda\in [\lambda_0, 1]$.

\paragraph{Case $\lambda\in \left[ -\frac 17, \lambda_0\right]$}
For $0<x\le 0.2$, we have
\begin{align}
  f_{r,\lambda}(x) \le f_{4,-\frac 17}(x) < x.
\end{align}
See \cref{fig:boht-r4:b} for a plot of $f_{4,-\frac 17}(x)-x$.

Thus robust reconstruction is impossible for all BMS channels $W$ satisfying $C_{\chi^2}(W) \le 0.2$. It remains to prove that for any $\lambda\in \left[ -\frac 17, \lambda_0\right]$ and $(r-1) d \lambda^2 = 1$, there exists $k\in \bZ_{\ge 0}$ such that
\begin{align} \label{eqn:special-boht:r4:num}
  C_{\chi^2}\left(\BP^k(\Id)\right) \le 0.2.
\end{align}
We take a set $T$ of tuples $(\lambda,d,k)$ as shown in \cref{tab:boht-r4} and run \cref{alg:pop} for each of these tuples.
The computations show that \cref{eqn:special-boht:r4:num} holds for all $(\lambda,d,k)\in T$.
Furthermore, for any $\lambda\in \left[ -\frac 17, \lambda_0\right]$ and $(r-1) d \lambda^2 = 1$, there exists $(\lambda',d',k')\in T$ such that $\lambda'\le \lambda$ and $d' \ge d$.
By \cref{lem:boht-monotone},
\begin{align}
  C_{\chi^2}\left( \BP_{r,d,\lambda}^{k'}(\Id) \right)
  \le C_{\chi^2}\left( \BP_{r,d',\lambda'}^{k'}(\Id) \right)
  \le 0.2.
\end{align}
See \cref{fig:boht-r4:c} for a comparison of the set $T$ and the Kesten-Stigum threshold.
This proves \cref{eqn:special-boht:r4:num}.
Combining \cref{eqn:special-boht:r4:num} with the robust non-reconstruction result, reconstruction is impossible below the KS threshold for $\lambda\in \left[ -\frac 17, \lambda_0\right]$.

\paragraph{Summary}
Combining the two cases, reconstruction is impossible below the KS threshold for $r=4$ and any $\lambda\in \left[-\frac 17, 1\right]$.
This proves the $r=4$ case of \cref{thm:boht}.

\subsection{Case \texorpdfstring{$r=5$}{r=5}} \label{sec:special-boht:r5}
\begin{figure}[h!]
  \centering
  \begin{subfigure}{0.5\textwidth}
    \centering
    \includegraphics[width=\linewidth]{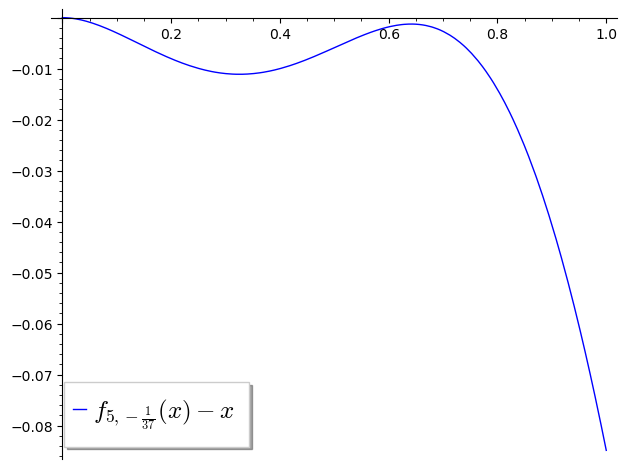}
    \caption{Plot of $f_{5,-\frac 1{37}}(x)-x$ for $x\in [0, 1]$.}
    \label{fig:boht-r5:a}
  \end{subfigure}%
  \begin{subfigure}{0.5\textwidth}
    \centering
    \includegraphics[width=\linewidth]{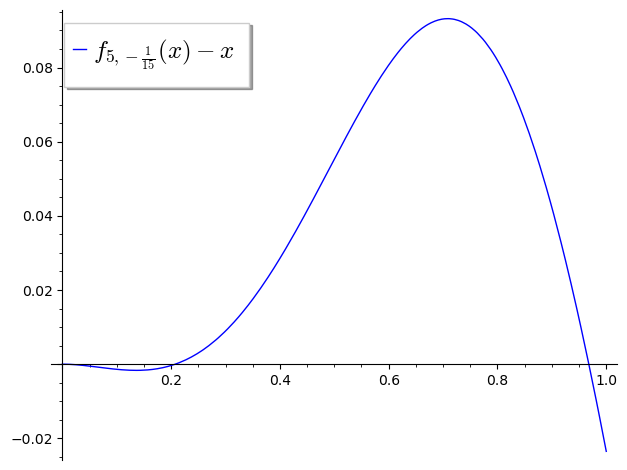}
    \caption{Plot of $f_{5,-\frac 1{15}}(x)-x$ for $x\in [0, 1]$.}
    \label{fig:boht-r5:b}
  \end{subfigure}
  \begin{subfigure}{0.5\textwidth}
    \centering
    \includegraphics[width=\linewidth]{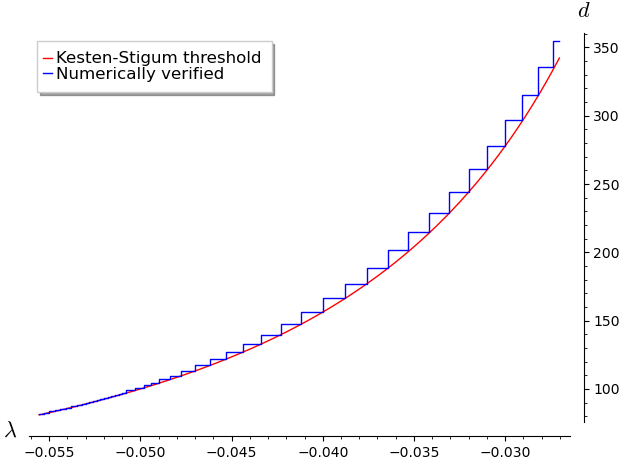}
    \caption{Plot of execution results for $r=5$. Red line is the Kesten-Stigum threshold $(r-1) d \lambda^2 = 1$. Parameters below the blue line are certified by the rigorous computation to satisfy $\lim_{k\to \infty} C_{\chi^2}\left(\BP^k(\Id)\right) \le 0.2$. See also \cref{tab:boht-r5}.}
    \label{fig:boht-r5:c}
  \end{subfigure}
  \caption{Plots for $r=5$.}
  \label{fig:boht-r5}
\end{figure}

\begin{table}[h!]
  \centering
  \begin{tabular}{|c|c|c|c|c|c|c|c|c|}
    \hline $\lambda$ & $-\frac{1}{18}$ & $-\frac{553}{10000}$ & $-\frac{550}{10000}$ & $-\frac{547}{10000}$ & $-\frac{544}{10000}$ & $-\frac{541}{10000}$ & $-\frac{538}{10000}$ & $-\frac{535}{10000}$ \\
    \hline $d-\frac 1{(r-1)\lambda^2}$ & $1$ & $1$ & $1$ & $1$ & $1$ & $1$ & $1$ & $1$ \\
    \hline $k$ & $68$ & $61$ & $54$ & $50$ & $46$ & $43$ & $41$ & $39$ \\
    \hline \hline $\lambda$ & $-\frac{532}{10000}$ & $-\frac{530}{10000}$ & $-\frac{528}{10000}$ & $-\frac{526}{10000}$ & $-\frac{524}{10000}$ & $-\frac{522}{10000}$ & $-\frac{520}{10000}$ & $-\frac{518}{10000}$ \\
    \hline $d-\frac 1{(r-1)\lambda^2}$ & $1$ & $1$ & $1$ & $1$ & $1$ & $1$ & $1$ & $1$ \\
    \hline $k$ & $37$ & $36$ & $35$ & $35$ & $34$ & $33$ & $32$ & $32$ \\
    \hline \hline $\lambda$ & $-\frac{516}{10000}$ & $-\frac{514}{10000}$ & $-\frac{512}{10000}$ & $-\frac{510}{10000}$ & $-\frac{508}{10000}$ & $-\frac{503}{10000}$ & $-\frac{498}{10000}$ & $-\frac{494}{10000}$ \\
    \hline $d-\frac 1{(r-1)\lambda^2}$ & $1$ & $1$ & $1$ & $1$ & $2$ & $2$ & $2$ & $2$ \\
    \hline $k$ & $31$ & $30$ & $30$ & $29$ & $32$ & $31$ & $30$ & $29$ \\
    \hline \hline $\lambda$ & $-\frac{490}{10000}$ & $-\frac{484}{10000}$ & $-\frac{478}{10000}$ & $-\frac{470}{10000}$ & $-\frac{462}{10000}$ & $-\frac{453}{10000}$ & $-\frac{444}{10000}$ & $-\frac{434}{10000}$ \\
    \hline $d-\frac 1{(r-1)\lambda^2}$ & $3$ & $3$ & $4$ & $4$ & $5$ & $5$ & $6$ & $7$ \\
    \hline $k$ & $31$ & $29$ & $31$ & $29$ & $30$ & $28$ & $29$ & $29$ \\
    \hline \hline $\lambda$ & $-\frac{423}{10000}$ & $-\frac{412}{10000}$ & $-\frac{400}{10000}$ & $-\frac{388}{10000}$ & $-\frac{376}{10000}$ & $-\frac{364}{10000}$ & $-\frac{353}{10000}$ & $-\frac{342}{10000}$ \\
    \hline $d-\frac 1{(r-1)\lambda^2}$ & $8$ & $9$ & $10$ & $11$ & $12$ & $13$ & $14$ & $15$ \\
    \hline $k$ & $29$ & $29$ & $28$ & $28$ & $27$ & $26$ & $26$ & $25$ \\
    \hline \hline $\lambda$ & $-\frac{331}{10000}$ & $-\frac{320}{10000}$ & $-\frac{310}{10000}$ & $-\frac{300}{10000}$ & $-\frac{291}{10000}$ & $-\frac{282}{10000}$ & $-\frac{274}{10000}$ &  \\
    \hline $d-\frac 1{(r-1)\lambda^2}$ & $16$ & $17$ & $18$ & $19$ & $20$ & $21$ & $22$ &  \\
    \hline $k$ & $24$ & $24$ & $23$ & $22$ & $22$ & $21$ & $21$ &  \\
    \hline
  \end{tabular}
  \caption{Execution results for $r=5$. Support size $s=8$. Precision parameter $w=2^{32}$. See also \cref{fig:boht-r5:c,app:code}.}
  \label{tab:boht-r5}
\end{table}

We take $\lambda_0 = -\frac 1{37}$, $\lambda^*=-\frac{1}{18}$ and divide into two cases $\lambda\in [\lambda_0, 1]$ and $\lambda\in [\lambda^*, \lambda_0]$.

\paragraph{Case $\lambda\in [\lambda_0, 1]$}
For any $\lambda\in [\lambda_0, 1]$ and $0<x\le 1$ we have
\begin{align}
  f_{r,\lambda}(x) \le f_{r,\lambda_0}(x) < x.
\end{align}
See \cref{fig:boht-r5:a} for a plot of $f_{r,\lambda_0}(x)-x$.
Therefore reconstruction is impossible below the KS threshold for $\lambda\in [\lambda_0, 1]$.

\paragraph{Case $\lambda\in [\lambda^*, \lambda_0]$}
For $0<x\le 0.2$, we have
\begin{align}
  f_{r,\lambda}(x) \le f_{5,-\frac1{15}}(x) < x.
\end{align}
See \cref{fig:boht-r5:b} for a plot of $f_{5,-\frac 1{15}}(x)-x$.

Thus robust reconstruction is impossible for all BMS channels $W$ satisfying $C_{\chi^2}(W)\le 0.2$.
It remains to prove that for any $\lambda\in [\lambda^*, \lambda_0]$ and $(r-1) d \lambda^2 = 1$, there exists $k\in \bZ_{\ge 0}$ such that
\begin{align} \label{eqn:special-boht:r5:num}
  C_{\chi^2}\left(\BP^k(\Id)\right) \le 0.2.
\end{align}
We take a set $T$ of tuples $(\lambda,d,k)$ as shown in \cref{tab:boht-r5} and run \cref{alg:pop} for each of these tuples.
The computations show that \cref{eqn:special-boht:r5:num} holds for all $(\lambda,d,k)\in T$.
Furthermore, for any $\lambda\in [\lambda^*, \lambda_0]$ and $(r-1) d \lambda^2 = 1$, there exists $(\lambda',d',k')\in T$ such that $\lambda'\le \lambda$ and $d' \ge d$.
By \cref{lem:boht-monotone},
\begin{align}
  C_{\chi^2}\left( \BP_{r,d,\lambda}^{k'}(\Id) \right)
  \le C_{\chi^2}\left( \BP_{r,d',\lambda'}^{k'}(\Id) \right)
  \le 0.2.
\end{align}
See \cref{fig:boht-r5:c} for a comparison of the set $T$ and the Kesten-Stigum threshold.
This proves \cref{eqn:special-boht:r5:num}.
Combining \cref{eqn:special-boht:r5:num} with the robust non-reconstruction result, reconstruction is impossible below the KS threshold for $\lambda\in \left[ \lambda^*, \lambda_0\right]$.

\paragraph{Summary}
Combining the two cases, reconstruction is impossible below the KS threshold for $r=5$ and any $\lambda\in \left[\lambda^*, 1\right]$, where $\lambda^* = -\frac{1}{18}$.
This proves the $r=5$ case of \cref{thm:boht}.

\subsection{Case \texorpdfstring{$r=6$}{r=6}} \label{sec:special-boht:r6}
\begin{figure}[h!]
  \centering
	\includegraphics[width=0.4\textwidth]{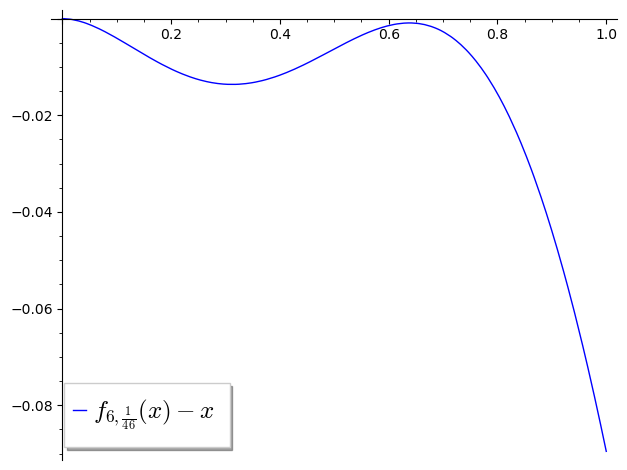}
  \caption{Plot of $f_{6,\frac 1{46}}(x)-x$ for $x\in [0, 1]$.}
  \label{fig:boht-r6-mid}
\end{figure}

Set $\lambda_0 = \frac 1{46}$. For any $\lambda\in [\lambda_0, 1]$ and $0<x\le 1$, we have
\begin{align}
  f_{r,\lambda}(x) \le f_{r,\lambda_0}(x) < x.
\end{align}
See \cref{fig:boht-r6-mid} for a plot of $f_{6,\frac 1{46}}(x)-x$.
Therefore reconstruction is impossible below the KS threshold for $r=6$ and $\lambda\in [\lambda_0, 1]$.
This proves the $r=6$ case of \cref{thm:boht}.



\section{Applications} \label{sec:app}
\subsection{Random NAE-SAT and random hypergraph bicoloring} \label{sec:app:nae-sat}
We first recall \cite{coja2017information}'s formula for the condensation threshold.
For a finite alphabet $\cX$, let $\scrP(\cX)$ denote the space of probability distributions over $\cX$, $\scrP^2(\cX)$ denote the space of probability distributions over $\scrP(\cX)$, and $\scrP^2_*(\cX)$ denote the space of probability distributions over $\scrP(\cX)$ with expectation $\Unif(\cX)$.
Define $\Lambda: \bR_{\ge 0} \to \bR_{\ge 0}$ as $\Lambda(x) = x\log x$ for $x>0$ and $\Lambda(0)=0$.
Let $\xi = 1-\frac{1-\exp(-\beta)}{2^{k-1}}$.
For $\mu \in \scrP^2_*(\{\pm\})$, define
\begin{align} \label{eqn:nae-sat-1rsb-pred-B}
  \scrB_{k,\beta,d}(\mu) =& \bE_{b\sim \Pois(d)} \bE_{(\pi_{i,j})_{i\in [b], j\in [k-1]} \iidsim \mu} \left[ \frac{\xi^{-b}}2 \Lambda\left( \sum_{x\in \{\pm\}} \prod_{i\in [b]} \left(1-\left(1-e^{-\beta}\right) \prod_{j\in [k-1]} \pi_{i,j}(x) \right) \right) \right] \\
  \nonumber &~- \bE_{\pi_1,\ldots,\pi_k \iidsim \mu} \left[ \frac{d(k-1)}{k \xi} \Lambda\left(1-\left(1-e^{-\beta}\right) \left( \sum_{x\in \{\pm\}}\prod_{i\in [k]} \pi_i(x)\right)\right) \right].
\end{align}
If $\mu$ is the point distribution at $\Unif(\{\pm\})$, then
\begin{align} \label{eqn:nae-sat-scrB-trivial}
  \scrB_{k,\beta,d}(\mu) = \log 2 + \frac dk \log \xi
\end{align}
is equal to the RS prediction \cref{eqn:nae-sat-rs-pred}.
\cite{coja2017information} proved that for a large class of random CSPs including random NAE-$k$-SAT and random hypergraph bicoloring, the condensation threshold $d_{k,\cond}(\beta)$ is given by
\begin{align} \label{eqn:nae-sat-cond}
  d_{k,\cond}(\beta) = \inf \left\{d > 0: \sup_{\mu \in \scrP^2_*(\{\pm\})} \scrB_{k,\beta,d}(\mu) > \log 2 + \frac dk \log \xi\right\}.
\end{align}

Evaluating the exact condensation threshold given by \cref{eqn:nae-sat-cond} requires solving an infinite-dimensional non-convex optimization problem. Local stability of the trivial solution $\mu = \mathbbm1_{\Unif(\{\pm\})}$ gives the Kesten-Stigum upper bound
\begin{align} \label{eqn:nae-sat-cond-ks}
  d_{k,\cond}(\beta) \le d_{k,\KS}(\beta) = \frac 1{k-1} \left( \frac{2^{k-1}-1+\exp(-\beta)}{1-\exp(-\beta)} \right)^2.
\end{align}
For large $k$ this bound is not tight: at zero temperature, \cite{coja2012condensation,sly2022number} showed
\begin{align} \label{eqn:nae-sat-cond-asymp}
  d_{k,\cond}(\infty) = \left(2^{k-1}-1\right) \log 2 + o_k(1).
\end{align}
Our small-arity result identifies regimes where the opposite behavior occurs and the Kesten-Stigum bound is exact.

The connection to reconstruction is as follows.
For any $\mu \in \scrP^2_*(\{\pm\})$, we can associate to it a binary-input channel $P=P^\mu: \{\pm\} \to \scrP(\{\pm\})$ via
\begin{align} \label{eqn:dist-to-channel}
  P(\cdot | x) = 2 \pi(x) d\mu(\pi), \qquad \forall x\in \{\pm\}.
\end{align}
Conversely, for any binary-input channel $P: \{\pm\}\to \cY$, we can associate to it a distribution $\mu=\mu^P \in \scrP^2_*(\{\pm\})$ via
\begin{align} \label{eqn:channel-to-dist}
  \mu = \bE_{x\sim \Unif(\{\pm\})} \bE_{y\sim P(\cdot | x)} \mathbbm1_{P_{X|Y} (\cdot | y)}.
\end{align}
In other words, $\mu$ is the law of the posterior distribution (under the uniform prior) $P_{X|Y}(\cdot |y)$, where $y\sim P(\cdot | x)$ and $x\sim \Unif(\{\pm\})$.
One checks that $\mu^{\left(P^\mu\right)}=\mu$ and $P^{\left(\mu^P\right)}$ is equivalent to $P$, so this establishes a bijection between $\scrP^2_*(\{\pm\})$ and the set of equivalence classes of binary-input channels.

\cite{coja2017information} (see also \cite{mezard2006reconstruction}) proved that to solve \cref{eqn:nae-sat-cond}, it suffices to optimize over fixed points of a function $\scrT: \scrP^2_*(\{\pm\}) \to \scrP^2_*(\{\pm\})$, defined as follows.
Let $\mu \in \scrP^2_*(\{\pm\})$ be the input. We define a one-layer $k$-uniform hypertree with root $\rho$.
\begin{enumerate}
  \item Generate $\sigma_\rho \sim \Unif(\{\pm\})$. This is the label of the root.
  \item Generate $b\sim \Pois(d)$. The root has $b$ downward hyperedges $(\rho,v_{i,1},\ldots,v_{i,k-1})_{i\in [b]}$.
  \item Independently for every hyperedge $(\rho,v_{i,1},\ldots,v_{i,k-1})$, generate $\sigma_{i,j}$ ($j\in [k-1]$) via
  \begin{align}
    (\sigma_{i,1},\ldots,\sigma_{i,k-1}) \sim B(\cdot | \sigma_\rho),
  \end{align}
  where $B$ is the broadcasting channel $B_{r,\lambda}$ (\cref{eqn:b-r-lambda}) with $r=k$ and
  \begin{align} \label{eqn:app:nae-sat:lambda-beta}
    \lambda = \frac{e^{-\beta}-1}{2^{k-1}-1+e^{-\beta}}.
  \end{align}
  \item Independently for every $v_{i,j}$, generate $\pi_{i,j}\in \scrP(\{\pm\})$ from $2 \pi(\sigma_{i,j}) d \mu(\pi)$.
  \item Compute $\pi_\rho\in \scrP(\{\pm\})$ via
  \begin{align}
    \pi_i(x) &= 1-(1-\exp(-\beta)) \prod_{j\in [k-1]} \pi_{i,j}(x), \qquad \forall i\in [b], x\in \{\pm\}\\
    \pi_\rho(x) &= \frac{\prod_{i\in [b]} \pi_i(x)}{\sum_{x'\in \{\pm\}} \prod_{i\in [b]} \pi_i(x')}, \qquad \forall x\in \{\pm\}.
  \end{align}
  \item The output $\scrT(\mu)$ is equal to the distribution of $\pi_\rho$ generated from the above process.
\end{enumerate}
This defines the operator $\scrT: \scrP^2_*(\{\pm\}) \to \scrP^2_*(\{\pm\})$.
Under the equivalence between $\scrP^2_*(\{\pm\})$ and binary-input channels, this corresponds to an operator that sends binary-input channels to binary-input channels.
Unfolding the definition, one obtains exactly the $\BP$ operator from \cref{eqn:intro:bp-operator}.
Therefore, to solve \cref{eqn:nae-sat-cond}, it suffices to optimize over binary-input fixed points of $\BP$.

\cref{thm:boht} shows that in the relevant regimes, the $\BP$ operator has no non-trivial BMS channel fixed point.
Let $P$ be a binary-input channel fixed point of $\BP$. Since $P\le_{\deg} \Id$, iterating the BP operator gives
\begin{align}
  P = \BP^\ell(P) \le_{\deg} \BP^{\ell}(\Id)
\end{align}
for any $\ell\ge 0$.
This implies
\begin{align}
  I\left(\pi, P\right) \le \lim_{\ell\to \infty} I\left(\pi, \BP^{\ell}(\Id)\right) = 0,
\end{align}
where $\pi=\Unif(\{\pm\})$.
So $P$ must be the trivial channel.
Thus the only fixed point of $\scrT$ in $\scrP^2_*(\{\pm\})$ is the trivial fixed point.

The preceding discussion shows that \cref{thm:boht} implies \cref{thm:nae-sat} via \cref{eqn:app:nae-sat:lambda-beta}.

\subsection{Hypergraph stochastic block model} \label{sec:app:hsbm}
\cref{thm:hsbm} follows from \cref{thm:boht} through the standard relationship between HSBM and BOHT.
\begin{theorem}[{\cite[Theorem 5.15]{gu2023channel}}] \label{thm:hsbm-boht}
  Consider the special HSBM model $\HSBM(n,r,a,b)$.
  Let $\BOHT(\Pois(d),r,\lambda)$ be the corresponding special BOHT, where $d$ and $\lambda$ are given by \cref{eqn:hsbm-boht-params}.
  If reconstruction is impossible for $\BOHT(\Pois(d),r,\lambda)$, then weak recovery is impossible for $\HSBM(n,r,a,b)$.
\end{theorem}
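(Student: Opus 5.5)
The plan is to follow the standard Mossel--Neeman--Sly argument, which derives impossibility of weak recovery from non-reconstruction on the local limit, adapted to $r$-uniform hypergraphs. The argument has three steps.

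\textbf{Step 1: weak recovery forces pairwise correlations.} I will show that weak recovery yields non-vanishing pairwise posterior correlations. Suppose an estimator $\wh X(G)$ achieves overlap at least $\epsilon$ with probability tending to $1$. The labels are symmetric under the global flip, so consider the statistic $\frac 1{n^2}\sum_{u,v} X_u X_v \wh X_u \wh X_v = \left(\frac 1n \sum_u X_u \wh X_u\right)^2$. It is bounded below by a constant $c(\epsilon)>0$ with high probability. Taking expectations and using $|\wh X_u \wh X_v|\le 1$ gives
\begin{align}
  \bE_{u,v\sim \Unif(V)}\, \bE\left|\bE[X_u X_v \mid G]\right| \ge c'(\epsilon) > 0
\end{align}
along a subsequence. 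It therefore suffices to prove that $\bE|\bE[X_uX_v\mid G]|\to 0$ for two uniformly random vertices $u,v$.

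\textbf{Step 2: local coupling.} Fix $k$. I will show that the radius-$k$ ball $B_k(u)$ in $G$, together with its labels, can be coupled with $(T_k,\sigma)$ from $\BOHT(\Pois(d),r,\lambda)$ with probability $1-o(1)$, where $d,\lambda$ are given by \cref{eqn:hsbm-boht-params}. The coupling comes from a breadth-first exploration. A vertex with label $x$ has, for each set of $r-1$ unexplored vertices, a hyperedge with probability $a/\binom n{r-1}$ or $b/\binom n{r-1}$. Summing over label patterns, the number of new hyperedges is approximately $\Pois(d)$, and the label pattern of the children is distributed as $B_{r,\lambda}(\cdot\mid x)$. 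Cycles within distance $k$ appear with probability $O(\poly(d^k)/n)$. In the same coupling, $v\notin B_k(u)$ with high probability.

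\textbf{Step 3: conditioning on the boundary, the main obstacle.} I will condition additionally on the labels $X_{\partial B_k(u)}$ of the vertices at distance exactly $k$. Since the conditional variance can only decrease under further conditioning, one gets
\begin{align}
  \bE\left|\bE[X_uX_v\mid G]\right| \le \bE\left|\bE[X_u X_v \mid G, X_{\partial B_k(u)}]\right| + o(1).
\end{align}
The main obstacle is to show that, given $(G, X_{\partial B_k(u)})$, the variable $X_u$ is approximately independent of everything outside the ball, in particular of $X_v$. The difficulty is that hyperedges are non-edges as well as edges: the absence of hyperedges between the inside and the outside of the ball carries information. Its effect is only $O(1/n)$ per vertex pair, but it must be controlled in aggregate.

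To handle this I would follow the Mossel--Neeman--Sly template. Write the joint posterior as a product of factors over all $r$-sets, and split it into three groups: the factors internal to the ball, which form the tree part; the factors internal to the outside; and the cross non-edge factors. I would then show that the cross factors depend on the inside labels only through label counts. These counts concentrate, so the cross factors change the posterior of $X_u$ by $o(1)$ in total variation. Under this approximation the Markov property makes $X_u$ conditionally independent of $X_v$ given $(G,X_{\partial B_k(u)})$, up to $o(1)$. Consequently $\bE|\bE[X_uX_v\mid G,X_{\partial B_k(u)}]|$ is at most $\bE|\bE[X_u\mid B_k(u), X_{\partial B_k(u)}]| + o(1)$. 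Through the coupling of Step 2, this equals $\bE|\bE[\sigma_\rho\mid T_k,\sigma_{L_k}]|+o(1)$. The BOHT quantity is controlled by the total variation criterion in \cref{defn:boht-recon}, and by assumption it tends to $0$ as $k\to\infty$. Letting $n\to\infty$ first and then $k\to\infty$ contradicts the lower bound from Step 1.
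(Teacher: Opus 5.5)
Your proposal is correct. It is the standard Mossel--Neeman--Sly reduction: weak recovery forces $\bE|\bE[X_uX_v\mid G]|\not\to 0$; the radius-$k$ neighborhood couples to the Poisson hypertree; and given the boundary labels, the interior is asymptotically independent of the outside, because the cross non-edge factors depend on interior labels only through counts while the outside label imbalance is $O(\sqrt n)$. This is the argument behind \cite[Theorem 5.15]{gu2023channel}, which the paper simply imports without reproving, so your route coincides with the one it relies on (one small point: the inequality at the start of your Step 3 is just conditional Jensen and needs no $o(1)$ term).
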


\section{Discussions} \label{sec:discuss}
\paragraph{Reconstruction threshold}
Known results on the reconstruction threshold for the special BOHT model $\BOHT(\Pois(d),r,\lambda)$ suggest that for every $r\ge 5$, there exists $\lambda_r^* \in \left[ -\frac 1{2^{r-1}-1}, 1\right]$ such that the KS threshold is tight for $\lambda>\lambda_r^*$ and is not tight for $\lambda<\lambda_r^*$. To our knowledge, the existence of such a $\lambda_r^*$ is not known for any $r\ge 5$.
In particular, for $r=6$, if such a $\lambda_r^*$ exists, then the KS threshold is tight for all $\lambda\in [0, 1]$.
The monotonicity of $f_{r,\lambda}$ in \cref{eqn:special-boht:f-r-lm} provides weak evidence for this conjecture.

A similar behavior has been conjectured for the Potts model with four states (\cite{ricci2019typology}).

\paragraph{Regular hypertrees}
We stated our main result \cref{thm:boht} for Poisson hypertrees because of their close relationship with random hypergraph models such as random NAE-SAT and HSBM\@.
Nevertheless, the methods in \cref{sec:robust,sec:pop} can provide non-trivial results for Galton-Watson hypertrees with general offspring distributions, including regular hypertrees, which may be connected to random regular hypergraph models.
However, our method does not prove some of the claims in \cref{thm:boht} if $\BOHT(\Pois(d),r,\lambda)$ is replaced with the regular hypertree model $\BOHT(d,r,\lambda)$.
For example, for $\BOHT(d,r,\lambda)$, with $r=3$, $d=5$, $\lambda=-\frac 1{\sqrt{10}}$, the $f$ function (\cref{eqn:prop:robust:f}) is expanding near $x=0$, i.e., $f(x)>x$ for all small enough $x>0$.
We know that robust non-reconstruction still holds by \cite{gu2024community}, so in this case our method is not tight.

\paragraph{Potts model}
One interesting question is whether our method can be generalized to non-binary models such as the Potts model, where it is predicted that the KS threshold is tight for $q=3$, any $\lambda$, and $q=4$, $\lambda>\lambda_4^*$ for some $\lambda_4^*<0$.
Our method relies heavily on the less-noisy preorder, which behaves very nicely for BMS channels but less well for more general channels. For example, the natural generalization of \cref{lem:bms-ln-coupling} to symmetric channels with a non-binary input alphabet does not always hold.
On the other hand, the degradation preorder still works nicely for $q$-ary symmetric channels (\cite{gu2023uniqueness}).
In principle, one could run rigorous population dynamics with the degradation preorder rather than the less-noisy preorder, and then use the robust non-reconstruction results proved using Sly's Taylor expansion method (\cite{sly2011reconstruction,mossel2023exact}). However, it is unclear how much computational power is needed to execute this framework. We leave this generalization to the Potts model as a further direction.

\ifdefined\isarxiv
\section*{Acknowledgments}
Part of this work was done while the author was supported by the National Science Foundation under Grant No.~DMS-1926686.
The author thanks Yury Polyanskiy and Allan Sly for helpful discussions.
\else
\fi

\bibliographystyle{alpha}
\bibliography{ref}

\appendix
\section{Verification Code}\label{app:code}
This appendix records the script and output for the rigorous population dynamics verification in \cref{sec:special-boht}. The script is implemented in SageMath 10.4.

\lstset{
  basicstyle=\ttfamily\footnotesize,
  breaklines=true,
  columns=fullflexible,
  keepspaces=true,
  numbers=left,
  numbersep=6pt,
  showstringspaces=false,
  xleftmargin=2em
}

\begin{lstlisting}[language=Python,frame=single]
# Data structure for building a BMS channel P while maintaining a BMS channel Q satisfying
# 1) P <=_ln Q
# 2) Q has bounded support
# Q is called an upper quantization of P
# c.f. Algorithm 2 in the paper
class BMSBuilder:
    # mesh: Q is supported on i/mesh for 0 <= i <= mesh
    # prec: all stored numbers are fractions with denominator prec
    def __init__(self, mesh, prec):
        self.mesh = mesh
        self.a = [0]*(self.mesh+1)
        self.prec = prec

    # normalize the channel so that total weight is 1
    def normalize_upper(self):
        s = sum(self.a)
        assert (s <= 1)
        self.a[-1] += 1-s

    # whether the channel is normalized
    def normalized(self):
        s = sum(self.a)
        return s == 1

    # helper function for adding p weight to k/mesh
    def __add_inner(self, k, p):
        assert (0 <= p <= 1)
        assert (0 <= k <= self.mesh)
        p_prec = floor(p*self.prec)/self.prec
        self.a[k] += p_prec

    # add BSC(theta=t) with weight p
    def add(self, t, p):
        assert (0 <= t <= 1)
        assert (0 <= p <= 1)
        if t == 1:
            self.__add_inner(self.mesh, p)
            return
        k = floor(t*self.mesh)
        assert (0 <= k <= self.mesh-1)
        tl = k/self.mesh
        tr = (k+1)/self.mesh
        ratio = (tr**2-t**2)/(tr**2-tl**2)
        self.__add_inner(k, p*ratio)
        self.__add_inner(k+1, p*(1-ratio))

    def items(self):
        return zip(self.a, [i/self.mesh for i in range(self.mesh+1)])

    # add BMS channel b with weight ratio
    def add_bms(self, b, ratio):
        for p, t in b.items():
            self.add(t, p*ratio)

    # compute chi2 information of the current channel
    def chi2_info(self):
        assert (self.normalized())
        s = sum(p*t**2 for p, t in self.items())
        return s

# Params
# r: hyperedge size
# lm: lambda, strength param
# d: expected offspring
# mesh: support size
# prec: precision
class Param:
    def __init__(self, r, lm, d, mesh=8, prec=2**32):
        self.r = r
        self.lm = lm
        self.d = d
        self.mesh = mesh
        self.prec = prec

# compute an upper quantization of the star product of two BMS channels
def star_bms(a, b, param):
    c = BMSBuilder(param.mesh, param.prec)
    for p1, t1 in a.items():
        for p2, t2 in b.items():
            p = p1*p2
            c.add((t1+t2)/(1+t1*t2), p*(1+t1*t2)/2)
            if not (t1 == t2 == 1):
                c.add(abs(t1-t2)/(1-t1*t2), p*(1-t1*t2)/2)
    c.normalize_upper()
    return c

# compute an upper quantization of BP of a BMS channel
def bp_bms(a, param):
    r = param.r
    lm = param.lm
    mesh = param.mesh
    prec = param.prec
    assert (a.mesh == mesh)
    c = BMSBuilder(mesh, prec)
    idx = [0]*(r-1)
    while True:
        p = 1
        t = []
        for i in range(r-1):
            t.append(idx[i]/a.mesh)
            p *= a.a[idx[i]]
        for w in range(2**(r-2)):
            tp = tn = 1
            for i in range(r-1):
                if ((w>>i)&1) == 1:
                    tp *= (1+t[i])
                    tn *= (1-t[i])
                else:
                    tp *= (1-t[i])
                    tn *= (1+t[i])
            c.add(abs(lm*(tp-tn)/(2*(1-lm)+lm*(tp+tn))), p*(2*(1-lm)+lm*(tp+tn))/(2**(r-1)))
        z = r-2
        while z >= 0 and idx[z] == mesh:
            idx[z] = 0
            z -= 1
        if z == -1:
            break
        idx[z] += 1
    c.normalize_upper()
    return c

# verify robust non-reconstruction for one set of parameters
# rob: target chi2 information. The goal is to reach chi2 info <= rob.
# stop: report failure if chi2 info stays above rob for stop iterations
# c.f. Algorithm 1 in the paper
def verify(param, rob, stop):
    d = param.d
    mesh = param.mesh
    prec = param.prec

    pois = [exp(-d)]
    D = 1
    while D <= d or pois[-1] >= 1/prec:
        pois.append(pois[-1]*d/D)
        D += 1
    for i in range(D):
        pois[i] = floor(pois[i]*prec)/prec

    a = BMSBuilder(mesh, prec)
    a.a[-1] = 1

    chi2_hist = [a.chi2_info().n()]
    it = 0
    print(f"iter {it} chi2_info {chi2_hist[-1]}", end='\r')

    while it < stop and chi2_hist[-1] > rob:
        bp_a = bp_bms(a, param)
        b = []
        b0 = BMSBuilder(mesh, prec)
        b0.a[0] = 1
        b.append(b0)
        b.append(bp_a)

        c = BMSBuilder(mesh, prec)
        for i in range(D):
            if i >= 2:
                b.append(star_bms(b[i//2], b[i-i//2], param))
            c.add_bms(b[i], pois[i])
        c.normalize_upper()
        a = c
        chi2_hist.append(a.chi2_info().n())
        it += 1
        print(f"iter {it} chi2_info {chi2_hist[-1]}", end='\r')
    return (chi2_hist[-1] <= rob, it)

# verify tightness of KS for all lambda between lm_s and lm_t
def verify_all(r, lm_s, lm_t, prec, rob, fixed_step=None):
    lm = lm_s
    it = oo
    step = 1 if fixed_step is None else fixed_step
    while lm < lm_t:
        if fixed_step is None and it < 30:
            step += 1
        d = 1/((r-1)*lm**2) + step
        f, it = verify(Param(r=r, lm=lm, d=d), rob, stop=100)
        assert(f)
        print(f"r={r} lm={n(lm)} d={n(d)} it={it}")
        lm = floor(-1/sqrt((r-1)*d)*prec)/prec
    print(lm)

# verify tightness of KS for r=4
# c.f. Table 1 and Figure 2 in the paper
verify_all(r=4, lm_s=-1/7, lm_t=-13/100, prec=1000, rob=2/10, fixed_step=1)

# verify tightness of KS for r=5
# c.f. Table 2 and Figure 3 in the paper
verify_all(r=5, lm_s=-1/18, lm_t=-1/37, prec=10000, rob=2/10)
\end{lstlisting}

The following captured output from the script verifies \cref{tab:boht-r4,tab:boht-r5}.
\begin{lstlisting}[frame=single]
r=4 lm=-0.142857142857143 d=17.3333333333333 it=47
r=4 lm=-0.139000000000000 d=18.2523851422459 it=33
r=4 lm=-0.136000000000000 d=19.0219146482122 it=27
r=4 lm=-0.133000000000000 d=19.8441027380481 it=24
-13/100
r=5 lm=-0.0555555555555556 d=82.0000000000000 it=68
r=5 lm=-0.0553000000000000 d=82.7503735992074 it=61
r=5 lm=-0.0550000000000000 d=83.6446280991736 it=54
r=5 lm=-0.0547000000000000 d=84.5536364213643 it=50
r=5 lm=-0.0544000000000000 d=85.4777249134948 it=46
r=5 lm=-0.0541000000000000 d=86.4172289967576 it=43
r=5 lm=-0.0538000000000000 d=87.3724934702395 it=41
r=5 lm=-0.0535000000000000 d=88.3438728273212 it=39
r=5 lm=-0.0532000000000000 d=89.3317315846006 it=37
r=5 lm=-0.0530000000000000 d=89.9996440014240 it=36
r=5 lm=-0.0528000000000000 d=90.6751606978880 it=35
r=5 lm=-0.0526000000000000 d=91.3583975480345 it=35
r=5 lm=-0.0524000000000000 d=92.0494726414545 it=34
r=5 lm=-0.0522000000000000 d=92.7485063343169 it=33
r=5 lm=-0.0520000000000000 d=93.4556213017752 it=32
r=5 lm=-0.0518000000000000 d=94.1709425917920 it=32
r=5 lm=-0.0516000000000000 d=94.8945976804279 it=31
r=5 lm=-0.0514000000000000 d=95.6267165286378 it=30
r=5 lm=-0.0512000000000000 d=96.3674316406250 it=30
r=5 lm=-0.0510000000000000 d=97.1168781237985 it=29
r=5 lm=-0.0508000000000000 d=98.8751937503875 it=32
r=5 lm=-0.0503000000000000 d=100.810714243367 it=31
r=5 lm=-0.0498000000000000 d=102.804825728617 it=30
r=5 lm=-0.0494000000000000 d=104.443901719418 it=29
r=5 lm=-0.0490000000000000 d=107.123281965848 it=31
r=5 lm=-0.0484000000000000 d=109.720852400792 it=29
r=5 lm=-0.0478000000000000 d=113.416851945869 it=31
r=5 lm=-0.0470000000000000 d=117.173381620643 it=29
r=5 lm=-0.0462000000000000 d=122.126740503364 it=30
r=5 lm=-0.0453000000000000 d=126.827015384316 it=28
r=5 lm=-0.0444000000000000 d=132.816005194384 it=29
r=5 lm=-0.0434000000000000 d=139.727388562085 it=29
r=5 lm=-0.0423000000000000 d=147.720224223016 it=29
r=5 lm=-0.0412000000000000 d=156.280610802149 it=29
r=5 lm=-0.0400000000000000 d=166.250000000000 it=28
r=5 lm=-0.0388000000000000 d=177.064406419386 it=28
r=5 lm=-0.0376000000000000 d=188.833408782254 it=27
r=5 lm=-0.0364000000000000 d=201.684941432194 it=26
r=5 lm=-0.0353000000000000 d=214.627563017118 it=26
r=5 lm=-0.0342000000000000 d=228.740980130638 it=25
r=5 lm=-0.0331000000000000 d=244.183386424001 it=24
r=5 lm=-0.0320000000000000 d=261.140625000000 it=24
r=5 lm=-0.0310000000000000 d=278.145681581686 it=23
r=5 lm=-0.0300000000000000 d=296.777777777778 it=22
r=5 lm=-0.0291000000000000 d=315.225611412241 it=22
r=5 lm=-0.0282000000000000 d=335.370504501786 it=21
r=5 lm=-0.0274000000000000 d=354.995897490543 it=21
-133/5000
\end{lstlisting}





\end{document}